\numberwithin{equation}{section}
\newcommand{\po}{\ar@{}[dr]|{\text{\pigpenfont R}}}
\newcommand{\pb}{\ar@{}[dr]|{\text{\pigpenfont J}}}
\gdef\pampmatrix{%
  \begingroup
  \let&=\amsamp
  \begin{pmatrix}%
}
\gdef\endpampmatrix{\end{pmatrix}\endgroup}
\DeclareMathOperator{\Sing}{Sing}
\DeclareMathOperator{\Rep}{Rep}
\DeclareMathOperator{\Hom}{Hom}
\DeclareMathOperator{\Fun}{Fun}
\DeclareMathOperator{\HH}{HH}
\DeclareMathOperator{\Sym}{Sym}
\DeclareMathOperator{\Alt}{Alt}
\newcommand{\Kdef}{\mathcal{K}^{\mathrm{def}}}
\newcommand{\Rdef}{\mathcal{R}^{\mathrm{def}}}
\newcommand{\CMon}{\mathrm{CMon}}
\newcommand{\Mod}{\mathcal{M}\mathrm{od}}
\newcommand{\CAlg}{\mathrm{CAlg}}
\newcommand{\cCAlg}{\mathrm{cCAlg}}
\newcommand{\Spc}{\mathrm{Spc}}
\newcommand{\Topp}{\mathrm{Top}}
\newcommand{\Fin}{\mathrm{Fin}}
\newcommand{\sset}{\mathrm{sSet}}
\newcommand{\Kan}{\mathrm{Kan}}
\newcommand{\R}{\mathbb{R}}
\newcommand{\Z}{\mathbb{Z}}
\newcommand{\C}{\mathbb{C}}
\newcommand{\N}{\mathbb{N}}
\newcommand{\Vect}{\mathrm{Vect}}
\newcommand{\colim}{\operatorname{colim}}
\newcommand{\Map}{\operatorname{Map}}
\newcommand{\co}{\colon\thinspace}
\newcommand{\op}{\mathrm{op}}
\theoremstyle{plain}
\newtheorem{theorem}{Theorem}[section]
\newtheorem*{theorem*}{Theorem}
\newtheorem{proposition}[theorem]{Proposition}
\newtheorem{lemma}[theorem]{Lemma}
\newtheorem{corollary}[theorem]{Corollary}
\theoremstyle{definition}
\newtheorem{definition}[theorem]{Definition}
\newtheorem{example}[theorem]{Example}
\newtheorem{construction}[theorem]{Construction}
\newtheorem{variant}[theorem]{Variant}
\newtheorem{notation}[theorem]{Notation}
\newtheorem*{notation*}{Notation}
\theoremstyle{remark}
\newtheorem{remark}[theorem]{Remark}
\DeclareMathAlphabet{\mathpzc}{OT1}{pzc}{m}{it}
\title{Spaces of homomorphisms, formality and Hochschild homology}
\author{Simon Gritschacher \\ {\small \itshape Mathematisches Institut der Ludwig-Maximilians-Universit{\"a}t M{\"u}nchen} \\ {\small \itshape Theresienstrasse 39, 80333, Munich, Germany} \\ {\small {\itshape Email:} \url{simon.gritschacher@math.lmu.de}}}
\date{\today}
\begin{document}

\maketitle

\begin{abstract}
Let $G$ be a discrete group. The topological category of finite dimensional unitary representations of $G$ is symmetric monoidal under direct sum and has an associated $\mathbb{E}_\infty$-space $\Kdef(G)$. We show that if $G$ and $A$ are finitely generated groups and $A$ is abelian, then $\Kdef(G\times A)\simeq \Kdef(G)\otimes \widehat{A}$ as $\mathbb{E}_\infty$-spaces, where $\widehat{A}$ is the Pontryagin dual of $A$. We deduce a homology stability result for the homomorphism varieties $\Hom(G\times \Z^r,U(n))$ using the local-to-global principle for homology stability of Kupers--Miller. For a finitely generated free group $F$ and a field $k$ of characteristic zero, we show that the singular $k$-chains in $\Kdef(F)$ are formal as an $\mathbb{E}_\infty$-$k$-algebra. Using this we describe the equivariant homology of $\Hom(F \times A,U(n))$ for every $n$ in terms of higher Hochschild homology of an explicitly determined commutative $k$-algebra. As an example we show that $\Hom(F\times \Z^r,U(2))$ is $U(2)$-equivariantly formal for every $r$ and we compute the Poincar{\'e} polynomial.
\end{abstract}

\setcounter{tocdepth}{1}


\section{Introduction} \label{sec:intro}

Let $G$ be a finitely generated discrete group and $U(m)$ the Lie group of $m\times m$ unitary matrices. We can consider the representation variety $\Hom(G,U(n))$ and the compact character variety $\Hom(G,U(n))/U(n)$, both of which are interesting moduli spaces from the viewpoint of differential geometry \cite{BFM, CohenStafa, Goldman}. In this paper we use methods from homotopy theory to study the homology of these varieties (in their natural Hausdorff topology) when $G$ is the direct product of two groups one of which is abelian.

The variety $\Hom(G,U(n))$ can depend in a complicated way on the group $G$. For example, if $N\to G\to G/N$ is a group extension, the representation theory of $G$ can be much more complicated than that of $N$ and $G/N$, and there is in general not an easy way of relating $\Hom(N,U(n))$, $\Hom(G/N,U(n))$ and $\Hom(G,U(n))$. This can make it difficult to study the homology of $\Hom(G,U(n))$ in a systematic manner. 

For an abelian group $A$, however, the complex representation theory is so simple that a fully homotopy theoretic description of $\Hom(A,U(n))$ can be achieved. All irreducible representations are $1$-dimensional, and every homomorphism $A\to U(n)$ is determined by an orthogonal decomposition of $\C^n$ and a collection of $1$-dimensional characters of $A$. This information can be organised best by looking at $\Hom(A,U(n))$ not in isolation, but by considering all finite dimensional representations of $A$ at once and the isomorphisms between them.

For any group $G$, the category $\Rep_U(G)$ of finite dimensional unitary representations is naturally a topological category, and taking direct sums of representations equips it with a symmetric monoidal structure. Its topological nerve $\Kdef(G):=\mathrm{N}(\Rep_U(G))$ is an $\mathbb{E}_\infty$-space, whose group-completion is Carlsson's unitary deformation $K$-theory of $G$ \cite{Carlsson, Lawson, RThesis}. For a trivial group $\Kdef(1)$ is the familiar $\mathbb{E}_\infty$-space $\bigsqcup_{m\geq 0} BU(m)$.

Our starting point in this paper is the observation (Proposition \ref{prop:key}) that if $G$ is a group and $A$ is an abelian group, both finitely generated, then twisting representations of $G$ by characters of $A$ induces an equivalence of $\mathbb{E}_\infty$-spaces
\begin{equation} \label{eq:keyequivalence}
\Kdef(G) \otimes \widehat{A} \simeq \Kdef(G \times A)\,,
\end{equation}
where $\widehat{A}=\Hom(A,U(1))$ is the character group of $A$. On the left hand side we use the tensoring of $\mathbb{E}_\infty$-spaces over spaces; equivalently, it can be read as the factorization homology $\int_{\widehat{A}} \Kdef(G)$. After group completion, (\ref{eq:keyequivalence}) may be viewed as a consequence of Lawson's product formula \cite{Lawson}, but the crucial point here is that no passage to group completions is needed. The homology of $\Kdef(G\times A)$ still encodes in a convenient way the equivariant homology of $\Hom(G\times A,U(n))$ for all $n$ at once, as
\[
H_\ast(\Kdef(G\times A);k) \cong \bigoplus_{n\geq 0} H^{U(n)}_\ast(\Hom(G\times A,U(n));k)\,.
\]
In this way, (\ref{eq:keyequivalence}) gives an effective method of studying the homology of $\Hom(G\times A,U(n))$ in relation to the homology of $\Hom(G,U(n))$. In particular, this can be applied to free abelian groups, i.e.,  $(G,A)=(1,\Z^r)$ (see e.g. \cite{AC,AG,KT21,Stafa} for related work), or more generally to direct products of free and free abelian groups, i.e., $(G,A)=(F_s,\Z^r)$. The latter is an interesting class of right-angled Artin groups, which received attention recently in \cite{Florentino2}. This class contains in particular the fundamental groups of the configuration spaces $\mathrm{Conf}_3(\R^2)$ (the pure braid group $P_3\cong F_2\times \Z$) and $\mathrm{Conf}_2((S^1)^2)$ (the group $F_2\times \Z^2$).

We will now summarise our main results.

\subsubsection*{Homology stability}

\begin{definition}
A sequence of maps $X_0\xrightarrow{f_0} X_1\xrightarrow{f_1} X_2\xrightarrow{f_2} \cdots$ satisfies \emph{homology stability (with coefficient group $C$)} if for every fixed $k\geq 0$ the induced maps $(f_n)_\ast\co H_k(X_n;C)\to H_k(X_{n+1};C)$ are isomorphisms when $n$ is sufficiently large (depending on $k$).
\end{definition}

By taking direct sum with a trivial representation we obtain sequences

\begin{align}
\begin{aligned}
\xymatrix{
\Hom_{\mathds{1}}(G,U(1)) \ar[r]^-{\oplus \mathds{1}} & \Hom_{\mathds{1}}(G,U(2)) \ar[r]^-{\oplus \mathds{1}} & \cdots
} 
\end{aligned} \label{eq:hsintro1} \\
\begin{aligned}
\xymatrix{
\Hom_{\mathds{1}}(G,U(1))/U(1) \ar[r]^-{\oplus \mathds{1}} & \Hom_{\mathds{1}}(G,U(2))/U(2) \ar[r]^-{\oplus \mathds{1}} & \cdots  
}
\end{aligned}  \label{eq:hsintro2}
\end{align}
The space $\Hom(G,U(n))$ may or may not be path-connected depending on $G$ and $n$; the subscript $\mathds{1}$ means that we picked the path-component of the trivial homomorphism. In Section \ref{sec:hs} we prove the following homology stability result with coefficients in $\Z$.

\begin{theorem} \label{thm:main1intro}
If the sequence (\ref{eq:hsintro1}), or (\ref{eq:hsintro2}), satisfies homology stability for a finitely generated group $G$, then it satisfies homology stability for $G\times \Z^r$ for every $r\geq 0$.
\end{theorem}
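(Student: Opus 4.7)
The plan is to combine Proposition~\ref{prop:key}, which identifies $\Kdef(G\times\Z^r)\simeq \Kdef(G)\otimes T^r$ as $\mathbb{E}_\infty$-spaces (with $T^r=\widehat{\Z^r}$), with the Kupers--Miller local-to-global principle for homology stability of $\mathbb{E}_n$-algebras. Under the equivalence of Proposition~\ref{prop:key}, the stabilization map $\oplus\mathds{1}$ in the sequences (\ref{eq:hsintro1}), (\ref{eq:hsintro2}) for $G\times\Z^r$ corresponds to multiplication by the unit class $\mathds{1}\in\pi_0\Kdef(G)_1$ in the $\mathbb{E}_\infty$-structure on $\Kdef(G)\otimes T^r$, so the sequences can be identified with the stabilization, in the path components of the trivial representation, of the $\mathbb{E}_\infty$-space $\Kdef(G\times\Z^r)$.

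The first step is to convert the given hypothesis on $G$ into stability of the Borel construction $\Kdef(G)_{\mathds{1},n}\simeq \Hom_{\mathds{1}}(G,U(n))//U(n)$. The Serre spectral sequence of the fibration
\[
\Hom_{\mathds{1}}(G,U(n))\longrightarrow \Kdef(G)_{\mathds{1},n}\longrightarrow BU(n),
\]
combined with the fact that $BU(n)\to BU(n+1)$ is $2n$-connected, upgrades stability of the fiber (the hypothesis of (\ref{eq:hsintro1})) to stability of the total space. For (\ref{eq:hsintro2}), one additionally compares $\Kdef(G)_{\mathds{1},n}$ with the strict quotient $\Hom_{\mathds{1}}(G,U(n))/U(n)$; since the center $Z(U(n))=U(1)$ acts trivially on $\Hom(G,U(n))$, this reduces to a comparison between the $PU(n)$-homotopy quotient and the $PU(n)$-strict quotient.

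Next, I would apply the Kupers--Miller local-to-global principle to the $\mathbb{E}_\infty$-algebra $\Kdef(G)$ with stabilizing class $\mathds{1}$ and to the manifold $T^r$, obtaining stability for the component $\Kdef(G\times\Z^r)_{\mathds{1},n}$. Finally, I would run the spectral sequence argument of the first step in reverse---inductively on homological degree, using the stable range of $H_*(BU(n))$ to propagate stability from the abutment to the fiber term by term---to recover non-equivariant stability for $\Hom_{\mathds{1}}(G\times\Z^r,U(n))$ or for its quotient. The main obstacle is this last inversion, particularly in the case (\ref{eq:hsintro2}), where the strict and homotopy quotients genuinely differ because the $U(n)$-action on $\Hom_{\mathds{1}}(G,U(n))$ has non-trivial stabilizers; controlling the difference between these two types of quotients requires the most care.
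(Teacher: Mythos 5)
For the sequence (\ref{eq:hsintro1}) your plan is essentially the paper's: Proposition~\ref{prop:key} (in the form of Variant~\ref{var:based}) gives $\Kdef_{\mathds{1}}(G\times\Z^r)\simeq \Kdef_{\mathds{1}}(G)\otimes(S^1)^r$; Kupers--Miller then propagates stability of the charged $\mathbb{E}_\infty$-space $\Kdef_{\mathds{1}}(G)$ to $\Kdef_{\mathds{1}}(G\times\Z^r)$; and the passage between stability of the Borel quotients $\Hom_{\mathds{1}}(G,U(n))_{hU(n)}$ and of the fibers $\Hom_{\mathds{1}}(G,U(n))$ uses the Serre spectral sequence of the fibration over $BU(n)$ together with stability and simple-connectivity of $BU(n)$ (the paper packages this as Lemma~\ref{lem:homologystability}, invoking Zeeman's comparison theorem; this replaces your ``run the spectral sequence in reverse'' sketch with a standard reference, and it really is an if-and-only-if).

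For the sequence (\ref{eq:hsintro2}) there is a genuine gap in your proposal, and you correctly flag it yourself: the $U(n)$-action (or the $PU(n)$-action, after quotienting out the center) on $\Hom_{\mathds{1}}(G,U(n))$ has non-trivial stabilizers, so there is no general comparison between $\Hom_{\mathds{1}}(G,U(n))_{hU(n)}$ and $\Hom_{\mathds{1}}(G,U(n))/U(n)$ that would let you deduce stability of one from the other. Trying to bridge that gap is the wrong route. The paper sidesteps it entirely: alongside $\Kdef(G)$ there is a second $\Gamma$-space $\Rdef(G)$ (Construction~\ref{cons:rdef}), namely the special $\Gamma$-space associated to the abelian topological monoid $\bigsqcup_{n\ge 0}\Rep(G,U(n))$, whose underlying space is \emph{literally} the strict quotient $\bigsqcup_n\Hom(G,U(n))/U(n)$. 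Proposition~\ref{prop:key}(2) and Variant~\ref{var:based}(2), which you did not use, give the analogous tensoring identity $\Rdef_{\mathds{1}}(G\times\Z^r)\simeq\Rdef_{\mathds{1}}(G)\otimes(S^1)^r$. One then applies Kupers--Miller to $\Rdef_{\mathds{1}}(G)$ directly; no Serre spectral sequence and no comparison of quotients is needed, because the stabilization sequence of the charged $\mathbb{E}_\infty$-space $\Rdef_{\mathds{1}}(G)$ already \emph{is} the sequence (\ref{eq:hsintro2}). You should replace your plan for case (\ref{eq:hsintro2}) with this.
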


Since $\Hom(\Z^r,U(n))$ is path-connected for all $r,n$, choosing $G$ to be the trivial group the following corollary is obtained, generalising results of \cite{KT21,RS20}.

\begin{corollary}
For every $r\geq 0$, the sequence of spaces $\Hom(\Z^r,U(n))$, $n\geq 0$ satisfies homology stability with integer coefficients.
\end{corollary}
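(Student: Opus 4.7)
The strategy is to specialise Theorem \ref{thm:main1intro} to the case where $G$ is the trivial group, so that $G \times \Z^r = \Z^r$. To invoke the theorem I must verify its hypothesis, namely homology stability of the sequence (\ref{eq:hsintro1}) for $G = 1$. But $\Hom(1, U(n))$ consists of a single point (the only homomorphism from the trivial group), so $\Hom_{\mathds{1}}(1, U(n)) = \mathrm{pt}$ for every $n$, and the stabilisation maps $\oplus \mathds{1} \colon \mathrm{pt} \to \mathrm{pt}$ are all isomorphisms on homology. Thus the hypothesis is satisfied tautologically, and the theorem yields homology stability for the sequence
\[
\Hom_{\mathds{1}}(\Z^r, U(1)) \xrightarrow{\oplus \mathds{1}} \Hom_{\mathds{1}}(\Z^r, U(2)) \xrightarrow{\oplus \mathds{1}} \cdots
\]
with integer coefficients.

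The remaining step is to remove the subscript $\mathds{1}$, i.e.\ to show that $\Hom(\Z^r, U(n))$ is path-connected for every $r, n \geq 0$, so that $\Hom_{\mathds{1}}(\Z^r, U(n)) = \Hom(\Z^r, U(n))$. This is classical: any homomorphism $\rho \colon \Z^r \to U(n)$ is determined by a commuting $r$-tuple $(u_1, \ldots, u_r)$ of unitary matrices, which by simultaneous diagonalisability is conjugate in $U(n)$ to an $r$-tuple lying in the maximal torus $T \subset U(n)$. Since $T \cong (S^1)^n$ is connected and contains the identity, the path $t \mapsto (u_1^t, \ldots, u_r^t)$ in $T^r$ joins any diagonal tuple to the trivial homomorphism, and conjugating back gives a path from $\rho$ to the trivial homomorphism inside $\Hom(\Z^r, U(n))$.

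The only potential subtlety is ensuring that the hypothesis about sequence (\ref{eq:hsintro1}) (the full representation variety version) is the one needed for the corollary as stated, rather than the character variety version (\ref{eq:hsintro2}); but both hold trivially for $G = 1$, so either formulation of the theorem suffices. Combining these observations, the corollary follows immediately from Theorem \ref{thm:main1intro}, with no further analysis of the spaces $\Hom(\Z^r, U(n))$ required. I do not expect any genuine obstacle here: all the work has been absorbed into Theorem \ref{thm:main1intro}, and the corollary is essentially a one-line specialisation together with the standard path-connectedness observation.
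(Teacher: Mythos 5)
Your proof is correct and matches the paper's approach exactly: the paper also obtains the corollary by setting $G=1$ in Theorem \ref{thm:main1intro} and invoking the path-connectedness of $\Hom(\Z^r,U(n))$ (noted there via Remark \ref{rem:pathconnected}) to drop the subscript $\mathds{1}$. Your spelling out of the tautological base case and the simultaneous-diagonalisation argument for connectedness is just a more explicit version of what the paper asserts in one line.
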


With rational coefficients homology stability of $\Hom(\Z^r,U(n))$ was first proved by Ramras and Stafa \cite{RS20} using methods from representation homology stability. A different proof, based on finding minimal generating sets for cohomology, was given by Kishimoto and Takeda \cite{KT21}. While the latter give optimal stability bounds in the rational setting, both proofs rely fundamentally on a well known, explicit description of the rational cohomology ring of $\Hom(\Z^r,U(n))$; so these proofs cannot be applied to prove stability with integer coefficients. Our proof is very different from theirs; it is an easy consequence of (\ref{eq:keyequivalence}) and a general homology stability principle for factorisation homology due to Kupers and Miller \cite{KM18}.

The compact character varieties $\Rep(\Z^k,U(n))$, on the other hand, are homeomorphic to the symmetric powers $\mathrm{SP}^n((S^1)^k)$ of a $k$-torus; they are well-known to satisfy homology stability with coefficients in $\Z$ (see e.g. \cite[Example 1.4]{KM18})

\subsubsection*{Spectral sequences and Hochschild homology}

Many computational results on representation varieties are over fields of characteristic zero, justified also by the interest in their algebro-geometric features (as in \cite{FLS}, for example), but their torsion is known to have a rich and complicated structure (see \cite{KT24} for the case of free abelian group representations). A formal consequence of (\ref{eq:keyequivalence}) are certain spectral sequences which provide a computational tool over more general rings. In Section \ref{sec:ss} we prove:

\begin{theorem} \label{thm:ssintro}
Let $k$ be a commutative ring. If $H_\ast(\Kdef(G);k)$, or $H_\ast(\Rdef(G);k)$, is $k$-flat, then there are, respectively, first quadrant spectral sequences for every $n\geq 0$
\begin{alignat}{3}
E^2 & =\HH^{\widehat{A}}_\ast(H_\ast(\Kdef(G);k))_n  && \Longrightarrow H_\ast^{U(n)}(\Hom(G\times A,U(n));k) \label{eq:ssintro1}\\ 
E^2 & =\HH^{\widehat{A}}_\ast(H_\ast(\Rdef(G);k))_n  && \Longrightarrow H_\ast(\Hom(G\times A,U(n))/U(n);k) \label{eq:ssintro2}
\end{alignat}
\end{theorem}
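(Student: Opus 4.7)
\emph{Proof plan.} The strategy is to turn the equivalence
\[
\Kdef(G\times A) \simeq \Kdef(G) \otimes \widehat{A}
\]
of Proposition~\ref{prop:key} into a spectral sequence by presenting the right-hand side simplicially and then applying singular $k$-homology levelwise. Choose a simplicial set $S_\bullet$ with $|S_\bullet|\simeq \widehat{A}$. Since the tensoring of $\mathbb{E}_\infty$-spaces over sets is given by the $\mathbb{E}_\infty$-coproduct, the tensoring of $\Kdef(G)$ over $\widehat{A}$ is computed as the geometric realization of the simplicial $\mathbb{E}_\infty$-space
\[
[n]\longmapsto \Kdef(G)^{\otimes S_n},
\]
whose $n$-simplices are the $S_n$-fold $\mathbb{E}_\infty$-coproduct of $\Kdef(G)$; this is the standard simplicial presentation of the factorization homology $\int_{\widehat{A}}\Kdef(G)$.

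The skeletal filtration of such a realization yields a first-quadrant homology spectral sequence
\[
E^2_{p,q} = \pi_p\bigl([n]\mapsto H_q(\Kdef(G)^{\otimes S_n};k)\bigr) \Longrightarrow H_{p+q}(\Kdef(G)\otimes \widehat{A};k).
\]
The flatness of $H_*(\Kdef(G);k)$ over $k$ allows the K{\"u}nneth theorem to identify
\[
H_*(\Kdef(G)^{\otimes S_n};k) \cong H_*(\Kdef(G);k)^{\otimes S_n},
\]
and at the same time promotes $H_*(\Kdef(G);k)$ to a graded commutative $k$-algebra via the $\mathbb{E}_\infty$-Pontryagin product. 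With these identifications the simplicial $k$-module on the $E^1$-page is precisely the Loday construction on $S_\bullet$ of this commutative algebra, whose homotopy groups are, by definition, the higher Hochschild homology $\HH^{\widehat{A}}_*(H_*(\Kdef(G);k))$. The abutment coincides with $H_*(\Kdef(G\times A);k)$ by (\ref{eq:keyequivalence}).

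Finally, direct sum of representations is strictly compatible with the grading by total dimension, so every term of the spectral sequence splits as a direct sum indexed by $n\geq 0$; extracting the dimension-$n$ summand and recalling the identification $H_*(\Kdef(G\times A);k)_n \cong H^{U(n)}_*(\Hom(G\times A,U(n));k)$ produces (\ref{eq:ssintro1}). The argument for (\ref{eq:ssintro2}) is formally identical, with $\Rdef(G)$ in place of $\Kdef(G)$. The main obstacle is choosing a model of $\Kdef(G)$ cofibrant enough as an $\mathbb{E}_\infty$-space so that the levelwise coproducts $\Kdef(G)^{\otimes S_n}$ genuinely compute the homotopy-correct tensoring; this is handled either by a standard cofibrant replacement or, more conceptually, by invoking the factorization-homology description of the tensoring and its compatibility with simplicial presentations of the source space.
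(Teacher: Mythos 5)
Your proposal follows essentially the same route as the paper: present $\Kdef(G)\otimes\widehat{A}$ as the geometric realization of a simplicial space built by evaluating levelwise on a finite simplicial model of $\widehat{A}$, run the homology spectral sequence of that realization, and use $k$-flatness with the K\"unneth theorem to identify the $E^1$-page with the Loday/Hochschild complex; the grading by charge then splits off the summand for each $n$. The one place where the paper is more concrete is exactly the cofibrancy concern you flag at the end. Instead of invoking an abstract cofibrant replacement of $\Kdef(G)$ as an $\mathbb{E}_\infty$-space, the paper takes the point-set simplicial space $\Kdef(G)((\widehat{A}_\bullet)_+)$ obtained by evaluating the $\Gamma$-space on a finite simplicial model for $\widehat{A}$; this is automatically Reedy cofibrant because $\Kdef(G)$ is a cofibrant $\Gamma$-space when $G$ is finitely generated (Lemma \ref{lem:cofibrant}, via the equivariant triangulation theorem), so the spectral sequence of \cite[Theorem 11.14]{Mayiteratedloopspaces} applies directly and the Segal condition identifies the levelwise homology with the tensor powers of $H_\ast(\Kdef(G);k)$ without further fuss. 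You would also want to record (as the paper does via the $\N^\otimes$-structure) that the simplicial space is a coproduct over $n\geq 0$ of simplicial subspaces with realizations $\Hom(G\times A,U(n))_{hU(n)}$, which is what makes the ``extract the dimension-$n$ summand'' step precise.
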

In the theorem, $\Rdef(G)=\bigsqcup_{n\geq 0} \Hom(G,U(n))/U(n)$ is the topological abelian monoid of conjugacy classes of representations, and $H_\ast(\Kdef(G);k)$ and $H_\ast(\Rdef(G);k)$ are viewed as graded commutative $k$-algebras. The $E^2$-page is the higher Hochschild homology of these algebras, in the sense of Loday and Pirashvili \cite{Pirashvili}. We will apply this spectral sequence to compute the Poincar{\'e} series of $\Hom(F_2\times \Z^r,U(2))/U(2)$ for any $r$ at the end of Section \ref{sec:example}. An application of (\ref{eq:ssintro1}) to the integral cohomology of spaces of commuting elements in $U(n)$ will be given in a forthcoming paper.

\subsubsection*{Free groups and formality}

In the second part of the paper we will focus on the case where $G=F_s$ is a finitely generated free group. A key observation will be the following formality result which we prove in Section \ref{sec:formality}. 

\begin{proposition} \label{prop:formalityintro}
Let $k$ be a field of characteristic zero. Then the singular $k$-chains in $\Kdef(F_s)$ are formal as an $\mathbb{E}_\infty$-$k$-algebra.
\end{proposition}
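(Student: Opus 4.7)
The plan is to reduce $\mathbb{E}_\infty$-formality of $\Kdef(F_s)$ to that of the base case $\Kdef(1)$ via the free-product decomposition $F_s=\Z\ast\cdots\ast\Z$ and the key equivalence (\ref{eq:keyequivalence}). A unitary representation of a free product is the same as a tuple of representations of the factors on a common Hermitian space, so there is a strict pullback of symmetric monoidal topological categories
\[
\Rep_U(F_s)\;\simeq\;\Rep_U(\Z)\times_{\Rep_U(1)}\cdots\times_{\Rep_U(1)}\Rep_U(\Z),
\]
whose forgetful maps to $\Rep_U(1)$ become fibrations at the level of topological nerves (with fibre $\Hom(\Z,U(n))=U(n)$ over $BU(n)$). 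Passing to nerves, this identifies $\Kdef(F_s)$ with the iterated homotopy pullback $\Kdef(\Z)\times_{\Kdef(1)}\cdots\times_{\Kdef(1)}\Kdef(\Z)$ as $\mathbb{E}_\infty$-spaces.

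Over a characteristic-zero field $k$, the inclusion of the normaliser of a maximal torus induces an $\mathbb{E}_\infty$-map $\Sym^\infty(BU(1))\to\Kdef(1)$ which is a rational equivalence, exhibiting $C_*(\Kdef(1);k)$ as the free $\mathbb{E}_\infty$-$k$-algebra on $C_*(BU(1);k)$; since $BU(1)$ is a formal space, this free algebra is formal. Feeding this into (\ref{eq:keyequivalence}) with $G=1$ and $A=\Z$, and noting that the free $\mathbb{E}_\infty$-space functor commutes with copowers over spaces, I get
\[
\Kdef(\Z)\;\simeq\;\Kdef(1)\otimes S^1\;\simeq_k\;\Sym^\infty(BU(1)\times S^1),
\]
so $C_*(\Kdef(\Z);k)$ is the free $\mathbb{E}_\infty$-$k$-algebra on the formal complex $C_*(BU(1)\times S^1;k)$ and is likewise formal.

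To propagate formality through the iterated pullback, each component of $\Kdef(1)$ being simply connected, I would invoke the Eilenberg--Moore theorem in the setting of $\mathbb{E}_\infty$-$k$-algebras, giving
\[
C_*(\Kdef(F_s);k)\;\simeq\; C_*(\Kdef(\Z);k)^{\otimes^{L}_{C_*(\Kdef(1);k)}\,s}.
\]
The free descriptions from the previous paragraph show that $H_*(\Kdef(\Z);k)\cong H_*(\Kdef(1);k)\otimes\Lambda(\bar\beta_i)_{i\geq0}$ is free as an $H_*(\Kdef(1);k)$-module, with generators coming from the $S^1$-factor; hence the derived tensor is underived, with value $H_*(\Kdef(1);k)\otimes\Lambda(\bar\beta_i^{(j)})_{i\geq0,\,1\leq j\leq s}$ carrying zero differential. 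Comparing weight-by-weight with the direct calculation $H_*(\Kdef(F_s);k)\cong\bigoplus_n H_*(BU(n);k)\otimes H_*(U(n);k)^{\otimes s}$ (which uses the rational collapse of the Serre spectral sequence for the Borel bundle $U(n)^s\to U(n)^s_{hU(n)}\to BU(n)$) then yields the desired $\mathbb{E}_\infty$-quasi-isomorphism $C_*(\Kdef(F_s);k)\simeq H_*(\Kdef(F_s);k)$.

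The hard part is the last step: establishing the $\mathbb{E}_\infty$-enhancement of the Eilenberg--Moore identification and verifying that the module-level freeness of $H_*(\Kdef(\Z);k)$ over $H_*(\Kdef(1);k)$ is compatible with the direct-sum $\mathbb{E}_\infty$-structure. In characteristic zero both reduce to the explicit free presentations at the chain level and the vanishing of higher $\operatorname{Tor}$ over the polynomial algebra $H_*(\Kdef(1);k)$, but the detailed tracking of the $\mathbb{E}_\infty$-operations through the derived tensor requires care.
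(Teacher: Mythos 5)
Your overall blueprint matches the paper's: identify the two base cases $k[\Kdef(1)]$ and $k[\Kdef(\Z)]$ as \emph{free} $\mathbb{E}_\infty$-$k$-algebras (so they are formal), then propagate formality through the iterated homotopy pullback presentation of $\Kdef(F_s)$ over $\Kdef(1)$ using Eilenberg--Moore. The base cases are handled correctly. However, there are two substantive problems in the inductive step.

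\textbf{Tensor versus cotensor.} Eilenberg--Moore for a homotopy pullback of spaces produces a \emph{derived tensor product on cochains}, or equivalently a \emph{derived cotensor product on chains} over the \emph{coalgebra} $C_*(\Kdef(1);k)$. Your formula $C_*(\Kdef(F_s);k)\simeq C_*(\Kdef(\Z);k)^{\otimes^L_{C_*(\Kdef(1);k)}\,s}$ uses a relative tensor over the Pontryagin \emph{algebra} structure; that computes the chains of a \emph{pushout} of $\mathbb{E}_\infty$-spaces, not a pullback. Likewise the relevant flatness hypothesis is freeness of $H_*(\Kdef(\Z);k)$ as an $H_*(\Kdef(1);k)$-\emph{comodule} (equivalently, by Leray--Hirsch, freeness of $H^*$ as a module over $H^*(BU(n);k)$ on each component), not freeness as a module, and the collapse you need is vanishing of higher $\Cotor$, not higher $\Tor$. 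The distinction matters because the $\mathbb{E}_\infty$-algebra structure you want to prove formal (direct sum) and the structure Eilenberg--Moore actually controls (diagonal, i.e.\ the coalgebra) are two different things; one must keep both in play. The paper does exactly this by working in the category $\CAlg(\cCAlg_k^{\N})$ of charged $\mathbb{E}_\infty$-bialgebras and phrasing the Eilenberg--Moore step as an equivalence of pullbacks ($\square^\N$), in both the chain-level and homology-level bialgebra categories (Theorem~\ref{thm:em} and Corollary~\ref{cor:equivalencechains}).

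\textbf{The final comparison is not an argument.} Observing that the homologies of both sides agree ``weight-by-weight,'' or even as graded algebras, does not produce an $\mathbb{E}_\infty$-quasi-isomorphism $C_*(\Kdef(F_s);k)\simeq H_*(\Kdef(F_s);k)$. You acknowledge this as ``the hard part,'' but this is not a technicality to be deferred: it is the entire content of the proposition. The paper's resolution is to first show that the single map $ev_1\colon\Map(S^1,BU(1))\to BU(1)$ is formal in $\cCAlg_k$ (Lemma~\ref{lem:formalccalg}), which is tractable because both cochain algebras are free $\mathbb{E}_\infty$-$k$-algebras on explicit generators; then apply the free bialgebra functor $\Sym$ to lift this to formality of $k[M_1]\to k[\Vect_\C^\simeq]$; and then propagate the chain-level equivalences (not merely homology computations) through the pullback squares inductively. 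Until you supply a construction of the comparison map at chain level compatible with the $\mathbb{E}_\infty$-structure — which in this setting requires the coalgebra bookkeeping precisely because you need Eilenberg--Moore in coalgebras alongside the Pontryagin algebra structure — the proof is incomplete.
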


The $\mathbb{E}_\infty$-space $\Kdef(F_s)$ is just the mapping space $\Map(\vee^s S^1,\bigsqcup_{m\geq 0} BU(m))$, but formality is not so obvious. We have no reason to expect that $\Kdef(G)$ is formal in this sense for a great many other groups $G$.  As a corollary of Proposition \ref{prop:formalityintro} we obtain an entirely algebraic description of the equivariant homology of $\Hom(F_s\times \Z^r,U(n))$ for all $r,s,n\geq 0$.

\begin{theorem} \label{thm:hhintro}
Let $k$ be a field of characteristic zero. Let $A$ be a finitely generated abelian group, and let $F_s$ be a free group of finite rank $s$. Then there is an isomorphism of graded commutative $k$-algebras
\[
H_\ast(\Kdef(F_s\times A);k)\cong \HH^{\widehat{A}}_\ast(H_\ast(\Kdef(F_s);k))\, .
\]
\end{theorem}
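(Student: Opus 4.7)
The approach is to transfer the question, via the equivalence (\ref{eq:keyequivalence}), from the space $\Kdef(F_s\times A)$ to the factorization homology of the formal $\mathbb{E}_\infty$-$k$-algebra of chains on $\Kdef(F_s)$, and then to identify this factorization homology with the higher Hochschild construction using the characteristic-zero comparison between $\mathbb{E}_\infty$-algebras and commutative DG algebras.

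I would carry out the argument in three steps. First, specialising (\ref{eq:keyequivalence}) to $G=F_s$ gives an equivalence $\Kdef(F_s\times A) \simeq \int_{\widehat{A}} \Kdef(F_s)$ of $\mathbb{E}_\infty$-spaces. Applying the singular $k$-chains functor, which is lax symmetric monoidal and sends the tensoring of $\mathbb{E}_\infty$-spaces over a space to the factorization homology of $\mathbb{E}_\infty$-$k$-algebras, produces an equivalence
\[
C_\ast(\Kdef(F_s\times A);k) \simeq \int_{\widehat{A}} C_\ast(\Kdef(F_s);k)
\]
of $\mathbb{E}_\infty$-$k$-algebras. Second, by Proposition \ref{prop:formalityintro} the chains $C_\ast(\Kdef(F_s);k)$ are quasi-isomorphic as an $\mathbb{E}_\infty$-$k$-algebra to the graded commutative algebra $H_\ast(\Kdef(F_s);k)$ (equipped with trivial differential). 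Since factorization homology is a homotopy invariant of the algebra argument, the right-hand side is equivalent to $\int_{\widehat{A}} H_\ast(\Kdef(F_s);k)$.

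Third, I would invoke the characteristic-zero comparison: over a field of characteristic zero the homotopy theories of $\mathbb{E}_\infty$-$k$-algebras and of commutative DG $k$-algebras coincide, and for a commutative DG algebra $R$ factorization homology over a space $X$ agrees with the Loday--Pirashvili higher Hochschild homology $\HH^X(R)$. Applied to $R=H_\ast(\Kdef(F_s);k)$ this yields $\int_{\widehat{A}} H_\ast(\Kdef(F_s);k) \simeq \HH^{\widehat{A}}(H_\ast(\Kdef(F_s);k))$. Noting that $\widehat{A}$ is a finite disjoint union of tori, hence a finite CW complex, and that the grading by representation dimension is preserved by all of the above constructions, passing to homology delivers the claimed isomorphism of graded commutative $k$-algebras.

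The main obstacle is bookkeeping: one has to fix a model for factorization homology of $\mathbb{E}_\infty$-$k$-algebras (e.g.\ Francis's, or the tensoring in $\mathbb{E}_\infty$-$k$-algebras) in which the formality input from Proposition \ref{prop:formalityintro} can actually be used, and to cite a reference identifying it with the Loday--Pirashvili construction compatibly with the commutative algebra structure and the dimension grading. Once these formal identifications are in place the argument reduces to the three equivalences above composed in sequence.
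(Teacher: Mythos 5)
Your proof is correct and takes essentially the same route as the paper: apply $k[-]$ to the equivalence $\Kdef(F_s\times A)\simeq \Kdef(F_s)\otimes\widehat{A}$, use the formality of $k[\Kdef(F_s)]$, and then read off the answer from the definition of $\HH^{\widehat{A}}_\ast$. The only minor difference is that your third step (invoking the characteristic-zero comparison with cdgas and a separate identification with the Loday--Pirashvili construction) is not an independent step in the paper, since the paper \emph{defines} $\HH^X_\ast(A)=\pi_\ast(A\otimes X)$ directly as the homotopy of the tensoring in $\CAlg_k$ (Definition \ref{def:hh}) and records the comparison with the Loday construction only as a remark; once formality is available, the isomorphism is immediate from this definition.
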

A description of the algebra $H_\ast(\Kdef(F_s);k)$ as a quotient of a symmetric algebra is given in Theorem \ref{thm:quotientoffreealgebra}. Through a suitable grading on both the homology of $\Kdef(F_s\times A)$ and the Hochschild homology one can extract from this theorem the equivariant homology of $\Hom(F_s\times A,U(n))$ for each individual $n$. For $n=2$ and $A=\Z^r$ the computations will be carried out in great detail in Section \ref{sec:example}. The main result is the computation of the Poincar{\'e} series of $\Hom(F_s\times \Z^r,U(2))$ in Theorem \ref{thm:u2}.

We should point out that we have no analogue of Proposition \ref{prop:formalityintro} or Theorem \ref{thm:hhintro} for character varieties, i.e., for $\Rdef(F_s)$ instead of $\Kdef(F_s)$. The homology of the free group character varieties $\Hom(F_s,U(n))/U(n)$ is largely unknown.

\subsubsection*{Organisation of the paper}

In Section \ref{sec:gammaspace} we recall Segal's $\Gamma$-spaces and how they model $\mathbb{E}_\infty$-spaces. We explain how to pass from the $1$-categorical constructions which occur naturally, to the more flexible $\infty$-categorical descriptions used later in the paper. In Section \ref{sec:defktheory} we define the deformation $K$-theory space based on $\Gamma$-spaces and make the key observation (\ref{eq:keyequivalence}). In Section \ref{sec:hs} we apply the local-to-global principle of Kupers-Miller to prove our homology stability result of Theorem \ref{thm:main1intro}. Examples are then discussed in Section \ref{sec:examples}. In Section \ref{sec:ss} we define Hochschild homology and discuss the spectral sequences of Theorem \ref{thm:ssintro}. In Section \ref{sec:formality} we state Theorem \ref{thm:hhintro}, the proof of which occupies Section \ref{sec:proofofformality}. Finally, in Section \ref{sec:example} we discuss applications of Theorem \ref{thm:hhintro}.

\section*{Acknowledgements}

I am indebted to Markus Hausmann for sharing his ideas, especially at the early stages of the project. I also thank Markus Land for many helpful discussions, and Daniel Ramras for comments on an earlier version and for sharing his unpublished notes with me.


\section{Recollection of $\Gamma$-spaces and $\mathbb{E}_\infty$-spaces} \label{sec:gammaspace}

\subsection{$\Gamma$-spaces}

For an integer $k\geq 0$ let $\langle k\rangle$ denote the finite based set $\{\ast,1,\dots,k\}$ with basepoint $\ast$. Let $\Fin_\ast$ denote the category whose objects are the sets $\langle k\rangle$ for $k\geq 0$ and whose morphisms are based maps $\langle k\rangle \to \langle l\rangle$. Let $\Topp$ denote the category of compactly generated spaces, as defined for example in \cite[Appendix A]{Global}.

\begin{definition} \label{def:gammaspace}
A \emph{$\Gamma$-space} is a functor $A \co \Fin_\ast \to \Topp$ such that $A(\langle 0\rangle)$ is a point.
\end{definition}

A $\Gamma$-space can be prolonged to a continuous functor on the category of based spaces $\Topp_\ast$, and we denote the prolonged functor by $A$ as well. Concretely, $A$ can be defined on a pointed space $X\in \Topp_\ast$ by the coend $\int^{\Fin_\ast} A\times \textnormal{Map}_\ast(-,X)$, that is, by the quotient of
\[
\bigsqcup_{\langle k\rangle \in \Fin_\ast} A(\langle k\rangle)\times  \textnormal{Map}_\ast(\langle k\rangle,X)
\]
by the equivalence relation generated by
\begin{equation} \label{eq:equivalencerelation}
(A(\alpha)(x),f)\sim (x,f \alpha)
\end{equation}
for all $x\in A(\langle k\rangle)$, $f\in \textnormal{Map}_\ast(\langle l\rangle,X)$ and $\alpha\co \langle k\rangle \to \langle l\rangle$ in $\Fin_\ast$.

We may also evaluate $A$ on a pointed simplicial set $X\in \sset_\ast$. We do this by evaluating $A$ levelwise on $X$, viewing based sets as based discrete topological spaces. This yields a simplicial space $A(X)$. For geometric realisation there is then a homeomorphism
\begin{equation} \label{eq:cgammaspaceofrealisation}
A(|X|)\cong |A(X)|\, ,
\end{equation}
see, for example, \cite[Proposition B.29]{Global}.

For $\langle k\rangle \in \Fin_\ast$ and $i=1,\dots,k$, let $\rho^i\co \langle k\rangle \to \langle 1\rangle$ be the map defined by $\rho^i(j)=1$ if $i=j$ and $\rho^i(j)=\ast$ otherwise.

\begin{definition} \label{def:special}
A $\Gamma$-space $A$ is called \emph{special}, if for all $\langle k\rangle \in \Fin_\ast$ the maps $\rho^1,\dots,\rho^k$ induce a weak equivalence $A(\langle k\rangle)\xrightarrow{\sim} A(\langle 1\rangle)^{\times k}$.
\end{definition}

Let $\nabla\co \langle 2\rangle\cong \langle 1\rangle \vee \langle 1\rangle \to \langle 1\rangle$ be the fold map. If $A$ is special, the zig-zag
\[
\xymatrix{
A(\langle 1\rangle)\times A(\langle 1\rangle) \,& & \ar[ll]_-{(A(\rho^1),A(\rho^2))}^-{\sim} A(\langle 2\rangle) \ar[r]^-{A(\nabla)} & A(\langle 1\rangle)
}
\]
exhibits $A(\langle 1\rangle)$ as a commutative monoid in the homotopy category. Special $\Gamma$-spaces model $\mathbb{E}_\infty$-spaces and we think of $A(\langle 1\rangle)$ as the underlying space of the $\mathbb{E}_\infty$-space.

For $\langle k\rangle\in \Fin_\ast$ we denote by $\langle k\rangle^\circ$ the set $\langle k\rangle$ minus the basepoint. For a set $S$ we denote by $S_+$ the set $S$ with an extra basepoint added.

\begin{definition}
A $\Gamma$-space is called \emph{cofibrant} if for every $\langle n\rangle \in \Fin_\ast$ the latching map
\[
\textnormal{colim}_{S\subsetneq \langle n\rangle^\circ} A(S_+) \to A(\langle n\rangle)
\]
is a $\Sigma_n$-cofibration for the evident action of the symmetric group $\Sigma_n$ on the source and target, see \cite[Definition B.33]{Global}.
\end{definition}

The $\Gamma$-spaces considered in this article will have this property.

\subsection{Tensoring a $\Gamma$-space with a space} \label{sec:tensoring}

We now recall how a $\Gamma$-space can be tensored with a space. We will do this first for $\Gamma$-objects in simplicial sets, that is, functors $A\co \Fin_\ast\to \sset$ with $A(\langle 0\rangle)=\ast$. We will return to $\Gamma$-spaces at the end of this subsection.

A functor $A\co \Fin_\ast\to \sset$ can be prolonged to the category of pointed simplicial sets via homotopy left Kan extension along the inclusion $\Fin_\ast\subseteq \sset_\ast$. More precisely, we evaluate $A$ on any based set, not just finite ones, by a filtered colimit. For $X\in \sset_\ast$ we may then define $A(X)$ by evaluating $A$ levelwise on $X$ and taking the diagonal of the resulting bisimplicial set.

\begin{construction} \label{cons:tensoring}
For $X\in \sset_\ast$ and a $\Gamma$-object $A\co \Fin_\ast\to \sset$ we define a new $\Gamma$-object $A_X\co \Fin_\ast\to \sset$ by
\[
A_X(\langle k\rangle)=A(\langle k\rangle \wedge X)\,.
\]
\end{construction}

If $A$ is special (in the sense of Definition \ref{def:special}), then so is $A_X$ for any based simplicial set. One can show this first for finite $X$, and then for general $X$ using the fact that filtered colimits commute with finite products in simplicial sets.

Let $A\co \Fin_\ast\to \sset$ be a special $\Gamma$-object, $X$ a simplicial set and $X_+$ the based simplicial set obtained by adjunction of a disjoint basepoint. We will now explain, in a somewhat ad hoc manner, that the assignment $(A,X)\to A_{X_+}$ of Construction \ref{cons:tensoring} enjoys the universal property of a tensor in the category of $\mathbb{E}_\infty$-spaces. To make this precise it will be convenient to use $\infty$-categorical language.

Our reference for $\infty$-categories will be \cite{HTT}; we let $\Spc$ and $\Spc_\ast$ denote the $\infty$-categories of spaces and based spaces, respectively, and $\mathrm{N}$ denotes the homotopy coherent nerve.

\begin{lemma}
The functor
\[
\mathrm{Fun}(\Fin_\ast,\sset) \times \sset_\ast\to \mathrm{Fun}(\Fin_\ast,\sset),\quad (A,X)\mapsto A_X
\]
refines to a functor of $\infty$-categories
\[
T\co \Fun(\mathrm{N}\Fin_\ast,\Spc) \times \Spc_\ast \to \Fun(\mathrm{N}\Fin_\ast,\Spc)\, .
\]
\end{lemma}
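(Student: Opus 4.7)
My plan is to realize both the source and the target of the putative $\infty$-functor $T$ as the underlying $\infty$-categories of model categories, and then to descend the 1-categorical assignment $(A,X)\mapsto A_X$ through the localizations after precomposing with a functorial cofibrant replacement. More precisely, I would equip $\sset$ with the Kan-Quillen model structure, so that $\Spc$ is modelled by $\mathrm{N}(\sset)$ localised at the weak equivalences, and analogously for $\Spc_\ast$. The functor category $\Fun(\Fin_\ast,\sset)$ carries the projective model structure with levelwise weak equivalences and fibrations; its underlying $\infty$-category is $\Fun(\mathrm{N}\Fin_\ast,\Spc)$ by the standard statement about diagram categories valued in simplicial sets.

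The heart of the argument is to show that, when the $\Gamma$-object $A$ is projective-cofibrant, the assignment $(A,X)\mapsto A_X$ is homotopical in each variable: a weak equivalence $X\to X'$ of based simplicial sets induces levelwise weak equivalences $A_X(\langle k\rangle)=A(\langle k\rangle\wedge X)\to A(\langle k\rangle\wedge X')=A_{X'}(\langle k\rangle)$ for every $k$, and similarly a projective equivalence $A\to A'$ between cofibrant objects yields a levelwise equivalence $A_X\to A'_X$. Both statements reduce to the classical assertion that the prolongation of a cofibrant $\Gamma$-object to all based simplicial sets preserves weak equivalences. One can prove this by filtering $X$ by its skeleta, using cofibrancy of the latching maps of $A$ to ensure that the prolongation converts the resulting pushouts along generating cofibrations of $\sset_\ast$ into homotopy pushouts in $\sset$.

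Once this homotopy invariance is established, I would precompose the 1-categorical formula with a functorial projective-cofibrant replacement $Q$ on $\Fun(\Fin_\ast,\sset)$. The composite $(A,X)\mapsto (QA)_X$ is then homotopical in both variables, hence descends to a functor on the underlying $\infty$-categories and produces the desired $T$. The main obstacle in this plan is the homotopy invariance step: concretely verifying that the diagonal-of-bisimplicial-set prolongation of a projective-cofibrant $\Gamma$-object preserves weak equivalences, uniformly in $\langle k\rangle$. The cleanest alternative, should the hands-on skeletal argument become unwieldy, is to identify the prolongation with the homotopy left Kan extension along the inclusion $\mathrm{N}\Fin_\ast\hookrightarrow \Spc_\ast$; since homotopy Kan extension and smashing with a based space are both intrinsically $\infty$-categorical operations, this reformulation delivers $T$ directly and at the same time certifies that on underlying $1$-categorical points it recovers the formula of Construction \ref{cons:tensoring}.
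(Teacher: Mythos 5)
Your plan takes a genuinely different route from the paper's. The paper observes that $(A,X)\mapsto A_X$ is assembled from simplicially enriched constituents (prolongation, smash, evaluation), hence is itself a $\sset$-enriched functor; restricting to bifibrant objects, postcomposing with a simplicial bifibrant replacement, and passing to homotopy coherent nerves then yields $T$ directly, with no need to ever verify that the construction is homotopy invariant — that issue is absorbed into the identification, via \cite[4.2.4.4]{HTT}, of the underlying $\infty$-category with the nerve of the bifibrant objects. Your plan instead runs through the universal property of $\infty$-categorical localization, which obliges you to prove that $(QA)_{(-)}$ preserves weak equivalences in both variables. This is a workable alternative, but it pushes the weight of the argument onto precisely the homotopy-invariance lemma that the enrichment trick bypasses, and it additionally requires (a small but necessary point) that the localization of the product $\Fun(\Fin_\ast,\sset)\times\sset_\ast$ agrees with the product of the localizations.

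More importantly, the sketch you give for the homotopy-invariance step does not work as stated. You filter $X$ by skeleta, which would require the prolongation of a cofibrant $A$ to carry pushouts of based simplicial sets along (wedges of) generating cofibrations to homotopy pushouts in $\sset$. This already fails for the projective-cofibrant corepresentable $A=\Fin_\ast(\langle 2\rangle,-)$, whose prolongation is $X\mapsto X^{\times 2}$: take the pushout of $S^0 \leftarrow \ast \rightarrow S^0$ along the generating cofibration $\partial\Delta^0_+\to\Delta^0_+$. Its image under the prolongation is $(S^0\vee S^0)^{\times 2}$, a discrete space with $9$ vertices, while the homotopy pushout of $(S^0)^{\times 2}\leftarrow \ast \rightarrow (S^0)^{\times 2}$ has only $7$; these are not weakly equivalent. (A similar count on the skeletal attaching square for $S^1$ gives $-4$ versus $0$ for Euler characteristics.) The correct induction runs instead over a cellular filtration of $A$ in the projective model structure: the prolongation \emph{is} colimit-preserving in the $A$-variable, a generating cofibrant cell $\Fin_\ast(\langle k\rangle,-)\times\Delta^n$ prolongs to $X\mapsto X^{\times k}\times\Delta^n$ (which visibly preserves weak equivalences since all objects of $\sset$ are cofibrant), and the transfinite induction then goes through because pushouts along cofibrations in $\sset$ are homotopy pushouts. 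With the induction variable corrected, your strategy can be completed — but at that point you have reconstructed by hand a good part of what the paper obtains for free from the simplicial enrichment.
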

\begin{proof}
We view $\sset$ as a simplicial model category equipped with the Quillen model structure. The functor category $\Fun(\Fin_\ast,\sset)$ is equipped with the projective model structure; this model structure is again simplicial, see \cite[Section A.3.3]{HTT}. The adjoint of $(A,X)\mapsto A_X$ is the functor
\[
\Fun(\Fin_\ast,\sset)\times \sset_\ast \times \Fin_\ast  \to \sset
\]
which is given by prolongation $\Fun(\Fin_\ast,\sset)\to \Fun(\sset_\ast,\sset)$, the smash product $\wedge\co \sset_\ast \times \Fin_\ast\to \sset_\ast$ and evaluation $\Fun(\sset_\ast,\sset)\times \sset_\ast\to \sset$, all of which are canonically $\sset$-enriched. Hence, their composite is $\sset$-enriched, and by adjunction this shows that $(A,X)\mapsto A_X$ extends canonically to a $\sset$-enriched functor.

Next recall from \cite[4.2.4.4]{HTT} that the $\infty$-category $\Fun(\mathrm{N}\Fin_\ast,\Spc)$ is modelled by the underlying $\infty$-category of the simplicial model category $ \Fun(\Fin_\ast,\sset)$. That means there is a canonical equivalence of $\infty$-categories
\begin{equation} \label{eq:functorcategory}
\mathrm{N} \Fun(\Fin_\ast,\sset)^{\circ}\simeq \Fun(\mathrm{N}\Fin_\ast,\Spc)
\end{equation}
where $\Fun(\Fin_\ast,\sset)^{\circ}\subseteq \Fun(\Fin_\ast,\sset)$ denotes the full subcategory on the bifibrant objects. The sought functor $T$ is now obtained by restricting the $\sset$-enriched functor $(A,X)\mapsto A_X$ to bifibrant objects, composing with a simplicial bifibrant replacement and taking homotopy coherent nerves.
\end{proof}

\begin{notation}
A $\Gamma$-object $A\co \Fin_\ast\to \sset$ gives rise to a functor of $\infty$-categories $\mathrm{N} \Fin_\ast \to \Spc$ by bifibrantly replacing $A$ and applying the nerve. We will denote this functor simply by $\mathrm{N}(A)$.
\end{notation}

With this notation, if $A$ is a $\Gamma$-object and $X\in \Spc_\ast$, then
\[
\mathrm{N}(A_X) \simeq T(\mathrm{N}(A),X)
\]
in the $\infty$-category $\Fun(\mathrm{N}\Fin_\ast,\Spc)$.

Following \cite[Definition 2.4.2.1]{HA} we define the $\infty$-category of $\mathbb{E}_\infty$-spaces as a full subcategory of $\Fun(\mathrm{N} \Fin_\ast,\Spc)$.

\begin{definition} \label{def:cmon}
An \emph{$\mathbb{E}_\infty$-space} is a functor of $\infty$-categories $A\co \mathrm{N} \Fin_\ast\to \Spc$ such that for every $\langle k\rangle \in \Fin_\ast$ the maps $\rho^1,\dots,\rho^k$ (see Definition \ref{def:special}) induce an equivalence $A(\langle k\rangle)\xrightarrow{\sim} A(\langle 1\rangle)^{\times k}$ in the $\infty$-category $\Spc$. We let $\CMon(\Spc)$ denote the $\infty$-category of $\mathbb{E}_\infty$-spaces.
\end{definition}

Because of the equivalence (\ref{eq:functorcategory}) every $\mathbb{E}_\infty$-space is equivalent to one of the form $\mathrm{N}(A)$ for a special $\Gamma$-object $A\co \Fin_\ast\to \mathrm{Kan}$ (where $\mathrm{Kan}$ is the category of Kan complexes).

The $\infty$-category $\CMon(\Spc)$ is presentable, see \cite[3.2.3.5]{HA}; in particular, it has small colimits. It follows that $\CMon(\Spc)$ is canonically tensored over $\Spc$ (see \cite[5.5.1.7]{HTT}) and we let
\[
\otimes \co \CMon(\Spc)\times \Spc\to \CMon(\Spc)
\]
denote the tensoring. Informally, this means that for an $\mathbb{E}_\infty$-space $A$ and $X\in \Spc$ the $\mathbb{E}_\infty$-space $A\otimes X$ is uniquely determined by demanding that $A\otimes (-)$ preserves colimits and that $A\otimes \ast\simeq A$ where $\ast\in \Spc$ is the one-point space.

Let $(-)_+\co \Spc\to \Spc_\ast$ be the left-adjoint of the forgetful functor.

\begin{lemma} \label{lem:tensoring}
Let $T'$ denote the composite functor of $\infty$-categories
\[
\CMon(\mathcal{S}) \times \Spc \xrightarrow{\textnormal{incl}\times (-)_+}  \Fun(\mathrm{N}\Fin_\ast,\Spc)\times \Spc_\ast \xrightarrow{T} \Fun(\mathrm{N}\Fin_\ast,\Spc)\, .
\]
\begin{enumerate}
\item[(i)] The essential image of $T'$ is $\CMon(\Spc)$.
\item[(ii)] As a functor into $\CMon(\Spc)$, $T'$ is canonically equivalent to $\otimes$.
\end{enumerate}
\end{lemma}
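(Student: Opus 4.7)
The plan is to prove (i) by a direct verification of the Segal condition on $A_{X_+}$, and then to deduce (ii) from the universal property of the tensoring in a presentable $\infty$-category, which reduces the claim to checking colimit preservation and the unit axiom.

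For (i), the essential image of $T'$ lies in $\CMon(\Spc)$ iff the Segal map $A(\langle k\rangle \wedge X_+) \to A(X_+)^k$ is an equivalence for every $k\geq 0$ and every $X\in\Spc$. When $X$ is a finite discrete space of cardinality $n$, one has $X_+\cong \langle n\rangle$ and $\langle k\rangle \wedge \langle n\rangle \cong \langle kn\rangle$, so the Segal map for $A_{X_+}$ reduces to Segal maps already present in $A$ itself and is an equivalence by the specialness of $A$. For arbitrary $X$, write $X$ as a sifted colimit of finite discrete spaces; both the source and the target of the Segal map preserve sifted colimits in $X$ (the target because sifted colimits commute with finite products in $\Spc$; the source because the prolongation of $A$, the smash with $\langle k\rangle$, and $(-)_+$ all preserve sifted colimits). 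So specialness of $A_{X_+}$ propagates from the finite discrete case to all of $\Spc$.

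For (ii), I would invoke the fact that $\Spc$ is the free colimit-cocompletion of the terminal $\infty$-category. This yields a natural equivalence $\Fun^L(\Spc,\CMon(\Spc))\simeq \CMon(\Spc)$ given by evaluation at $\ast$, which in turn implies that any bifunctor $\CMon(\Spc)\times \Spc\to\CMon(\Spc)$ preserving colimits in the second variable is determined up to canonical natural equivalence by its restriction to $\CMon(\Spc)\times\{\ast\}$. The tensoring $\otimes$ is by definition such a functor, with $A\otimes \ast\simeq A$. It therefore suffices to verify (a) $T'(A,\ast)\simeq A$, immediate from $(\ast)_+=S^0=\langle 1\rangle$ and $A(\langle k\rangle\wedge \langle 1\rangle)=A(\langle k\rangle)$, and (b) $T'(A,-)$ preserves colimits. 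Decomposing general colimits into sifted colimits and finite coproducts: sifted colimits are computed pointwise both in $\Spc$ and in $\CMon(\Spc)$ and are handled as in (i), and for $X\sqcup Y$ one has $(X\sqcup Y)_+=X_+\vee Y_+$, so specialness of $A$ gives
\[
A(\langle k\rangle\wedge (X_+\vee Y_+))\simeq A(\langle k\rangle\wedge X_+)\times A(\langle k\rangle\wedge Y_+),
\]
which is the coproduct $T'(A,X)\sqcup T'(A,Y)$ in the semiadditive $\infty$-category $\CMon(\Spc)$.

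The main obstacle is the $\infty$-categorical verification of colimit-preservation, in particular that the strict, simplicially enriched prolongation of a bifibrant $\Gamma$-object preserves sifted colimits at the level of $\infty$-categories rather than just homotopy categories. This is where the setup of the preceding lemma pays off: $T$ was built as an $\sset$-enriched functor on bifibrant objects, and after passing to nerves the sifted-colimit preservation reduces to standard facts about simplicial presheaves and the commutation of finite products with sifted colimits in $\Spc$.
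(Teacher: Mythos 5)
Your proof is essentially the paper's: both decompose (ii) via the universal property of $\Spc$ into the unit check $T'(A,\ast)\simeq A$ plus preservation of finite coproducts and of sifted colimits, with sifted colimits handled by reducing to the definition of prolongation. The only cosmetic difference is in the coproduct step, where you invoke semiadditivity of $\CMon(\Spc)$ directly, whereas the paper evaluates the comparison map $T'(A,X)\otimes T'(A,Y)\to T'(A,X\sqcup Y)$ at $\langle 1\rangle$ and uses conservativity of the forgetful functor together with \cite[3.2.4.7]{HA} -- the same fact in different clothing; and you supply a direct argument for (i) that the paper merely cites from the remark following Construction \ref{cons:tensoring}.
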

\begin{proof}
Item (i) is a consequence of the fact that if $A\co \Fin_\ast\to \Kan$ is special, then so is $A_X$ for any $X\in \Spc_\ast$ and thus $\mathrm{N}(A_X)\in \CMon(\Spc)$.

To prove (ii) it is enough to show, by the universal property of $\Spc$, that $T'$
\begin{itemize}
\item is equivalent to the identity when restricted to the point $\ast\in \Spc$
\item preserves colimits in the second argument.
\end{itemize}

The first item is clear, because $T(-,S^0)\simeq id$. To prove the second item it is enough to check this separately for finite coproducts and for sifted colimits.

Let $X,Y\in \Spc$ and let $A\co \Fin_\ast\to \Kan$ be special. By functoriality, the inclusions $X_+\hookrightarrow X_+\vee Y_+ \hookleftarrow Y_+$ induce maps of $\mathbb{E}_\infty$-spaces
\[
\mathrm{N} (A_{X_+}) \xrightarrow{j_X} \mathrm{N} (A_{X_+\vee Y_+}) \xleftarrow{j_Y} \mathrm{N}(A_{Y_+})
\]
and hence a map from the coproduct in $\CMon(\Spc)$ (also denoted by $\otimes$)
\[
\{j_X,j_Y\}\co \mathrm{N} (A_{X_+}) \otimes \mathrm{N}(A_{Y_+}) \to  \mathrm{N} (A_{X_+\vee Y_+})\, .
\]
We claim that this map is an equivalence, thus exhibiting $\mathrm{N} (A_{X_+\vee Y_+})$ as a coproduct of $\mathrm{N} (A_{X_+})$ and $\mathrm{N} (A_{Y_+})$. This can be checked on underlying spaces, that is, after evaluation at $\langle 1\rangle\in \mathrm{N}\Fin_\ast$. There we have
\[
(\mathrm{N}(A_{X_+}) \otimes \mathrm{N}(A_{Y_+}))(\langle 1\rangle) \xrightarrow{\{j_X,j_Y\}}  \mathrm{N} (A_{X_+\vee Y_+})(\langle 1\rangle) \xrightarrow{\sim} \mathrm{N}(A_{X_+})(\langle 1\rangle)\times \mathrm{N}(A_{Y_+})(\langle 1\rangle)\, .
\]
The second map, which is induced by the evident maps $X_+\leftarrow X_+\vee Y_+\to Y_+$, is an equivalence, because $A$ is special. The composite map is an equivalence, witnessing the fact that the forgetful functor $\CMon(\Spc)\to \Spc$ is symmetric monoidal for the coproduct on $\CMon(\Spc)$ and the Cartesian product on $\Spc$, see \cite[3.2.4.7]{HA}. Hence, the first map is an equivalence as well. This proves preservation of finite coproducts in $\Spc$.

To check preservation of sifted colimits in $\Spc$, we use the fact that sifted colimits in $\CMon(\Spc)$ are reflected by the forgetful functor $\CMon(\Spc)\to \Spc$, see \cite[3.2.3.2]{HA}. This reduces the question to whether $T$ preserves sifted colimits in $\Spc_\ast$.

Through the correspondence of homotopy colimits in a simplicial model category with colimits in the underlying $\infty$-category \cite[4.2.4.1]{HTT}, this eventually reduces the question to whether, for any fixed $\Gamma$-object $A\co \Fin_\ast\to \sset$, the functor $\sset_\ast \to \sset$, $X\mapsto A(X)$ preserves sifted homotopy colimits. But this is so by the very definition of the prolongation of $A$.
\end{proof}

\begin{corollary} \label{cor:tensor}
For every $X\in \Spc$ and special $\Gamma$-object $A\co \Fin_\ast\to \sset$, there is an equivalence of $\mathbb{E}_\infty$-spaces
\[
\mathrm{N}(A_{X_+}) \simeq \mathrm{N} (A)\otimes X
\]
which is natural in $A$ and $X$.
\end{corollary}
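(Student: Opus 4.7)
The plan is to deduce this as an immediate consequence of Lemma \ref{lem:tensoring}, with essentially no additional work required. First, I would unpack the definition of $T'$: by construction, for a special $\Gamma$-object $A\co \Fin_\ast\to \sset$ and $X\in \Spc$, we have
\[
T'(\mathrm{N}(A),X) = T(\mathrm{N}(A),X_+) \simeq \mathrm{N}(A_{X_+})\,,
\]
where the last equivalence is the defining property of $T$ relating $A_X$ to $T(\mathrm{N}(A),X)$ established in the lemma preceding Lemma \ref{lem:tensoring}.

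Next, I would invoke Lemma \ref{lem:tensoring}(ii), which asserts that $T'$, viewed as a functor into $\CMon(\Spc)$ (which is legitimate by part (i) of the same lemma, since $A_{X_+}$ remains special), is canonically equivalent to the tensoring functor $\otimes\co \CMon(\Spc)\times \Spc \to \CMon(\Spc)$. Evaluating at $(\mathrm{N}(A),X)$ therefore yields
\[
T'(\mathrm{N}(A),X) \simeq \mathrm{N}(A)\otimes X\,.
\]
Concatenating the two displayed equivalences gives the desired equivalence $\mathrm{N}(A_{X_+})\simeq \mathrm{N}(A)\otimes X$.

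Finally, naturality in $A$ and $X$ is automatic: both sides are the values of functors of $\infty$-categories on $(\mathrm{N}(A),X)$, and the identification just produced is supplied by an equivalence of these two functors. The only point requiring minor care — and essentially the only genuine content beyond quoting the lemma — is ensuring that the restriction of $T'$ along the inclusion of special $\Gamma$-objects (equivalently, of $\CMon(\Spc)$) lands in $\CMon(\Spc)$, which is exactly what Lemma \ref{lem:tensoring}(i) provides. There is no substantive obstacle, and the proof is a one-line composition of the two parts of the lemma.
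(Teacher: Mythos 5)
Your proof is correct and takes the same route the paper intends: the corollary is exactly the combination of the displayed identification $\mathrm{N}(A_X)\simeq T(\mathrm{N}(A),X)$ (applied to $X_+$) with the two parts of Lemma \ref{lem:tensoring}, and naturality comes for free since the equivalence $T'\simeq\otimes$ is an equivalence of functors. No gaps.
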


The $\Gamma$-spaces considered in Section \ref{sec:defktheory} are naturally defined in $\Topp$; we now explain how to translate them into a simplicial setting. 

First, we have the following analogue of Construction \ref{cons:tensoring}.

\begin{construction} \label{cons:tensoring2}
For $X\in \Topp_\ast$ and a $\Gamma$-space $A\co \Fin_\ast\to \Topp$ we define a new $\Gamma$-space $A_X\co \Fin_\ast\to \Topp$ by setting
\[
A_X(\langle k\rangle)= A(\langle k \rangle \wedge X)\,.
\]
\end{construction}

If $X$ is a finite CW-complex and $A$ is special and cofibrant, then $A_X$ is again special and cofibrant by \cite[Proposition B.54]{Global}.

Let $\textnormal{Sing}\co \Topp\to\sset$ be the singular complex.  If $A$ is a (special) $\Gamma$-space, then
\[
\Sing A \co \Fin_\ast\to \sset
\]
is a (special) $\Gamma$-object in simplicial sets. 

\begin{lemma} \label{lem:singular}
If $A$ is a cofibrant $\Gamma$-space and $X$ is a finite based simplicial set, then there is a natural weak equivalence
\[
(\Sing A)_X \simeq \Sing A_{|X|}\, ,
\]
which is also natural in $A$ and $X$.
\end{lemma}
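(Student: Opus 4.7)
The plan is to identify both sides of the claimed equivalence and then reduce to the standard comparison between the diagonal of a bisimplicial set and the singular complex of the realisation of a simplicial space.

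Evaluated at $\langle k\rangle\in\Fin_\ast$, the right hand side unpacks as
\[
\Sing(A_{|X|})(\langle k\rangle)=\Sing\bigl(A(\langle k\rangle\wedge|X|)\bigr)\cong\Sing\bigl|A(\langle k\rangle\wedge X_\bullet)\bigr|,
\]
using the isomorphism $|\langle k\rangle\wedge X|\cong\langle k\rangle\wedge|X|$ (a finite smash commutes with geometric realisation) and then \eqref{eq:cgammaspaceofrealisation} applied to the based simplicial set $\langle k\rangle\wedge X$, where $A(\langle k\rangle\wedge X_\bullet)$ denotes the simplicial space $[n]\mapsto A(\langle k\rangle\wedge X_n)$. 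By the definition of the prolongation of the $\Gamma$-object $\Sing A$ via the diagonal, the left hand side reads
\[
(\Sing A)_X(\langle k\rangle)=(\Sing A)(\langle k\rangle\wedge X)=\textnormal{diag}\bigl([n]\mapsto\Sing A(\langle k\rangle\wedge X_n)\bigr).
\]

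The desired map will then be induced by the canonical comparison map
\[
\textnormal{diag}\bigl([n]\mapsto\Sing Z_n\bigr)\to\Sing|Z_\bullet|,
\]
available for any simplicial space $Z_\bullet$, which sends an $n$-simplex $\sigma\colon|\Delta^n|\to Z_n$ on the left to the composite $|\Delta^n|\xrightarrow{(\mathrm{id},\sigma)}|\Delta^n|\times Z_n\to|Z_\bullet|$, taken for $Z_\bullet=A(\langle k\rangle\wedge X_\bullet)$. Naturality in $A$, $X$ and $\langle k\rangle$ is immediate from this description, so only the claim of weak equivalence requires work.

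The main step, and the main obstacle, is thus showing that this comparison is a weak equivalence when $Z_\bullet=A(\langle k\rangle\wedge X_\bullet)$. The argument proceeds in two parts. First, $Z_\bullet$ is \emph{good} (or \emph{proper}), meaning its degeneracies are cofibrations: each degeneracy $X_{n-1}\hookrightarrow X_n$ of the simplicial set $X$ is injective, hence $\langle k\rangle\wedge X_{n-1}\hookrightarrow\langle k\rangle\wedge X_n$ is an inclusion of finite based sets, and cofibrancy of the $\Gamma$-space $A$ sends any such inclusion to a cofibration of spaces (in the spirit of \cite[Proposition B.54]{Global}, already invoked for $A_X$). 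Second, for good simplicial spaces the geometric realisation coincides up to homotopy with the fat realisation, and the fat realisation of the bisimplicial set $\Sing Z_\bullet$ is weakly equivalent both to the realisation of its diagonal and, via the level-wise counits $|\Sing Z_n|\xrightarrow{\sim}Z_n$, to the fat realisation of $Z_\bullet$. Assembling these equivalences and invoking Milnor's theorem to pass from $|-|$ to $\Sing$ yields the required weak equivalence, naturally in $A$ and $X$.
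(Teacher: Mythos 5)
Your proof is correct and essentially the same as the paper's: both reduce the problem, via \eqref{eq:cgammaspaceofrealisation} and the diagonal/realisation comparison, to showing that the levelwise counit $|\Sing A(\langle k\rangle\wedge X_n)|\to A(\langle k\rangle\wedge X_n)$ realises to a weak equivalence of spaces. The only difference is the cofibrancy lemma invoked at this step: the paper cites Reedy cofibrancy of $[n]\mapsto A(\langle k\rangle\wedge X_n)$ (\cite[Props.\ B.37, A.44]{Global}), whereas you argue via goodness/properness and fat realisations; both are standard and draw exactly the same input from the cofibrancy of $A$ and the finiteness of $X$.
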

\begin{proof}
Recall that the geometric realisation of the diagonal of a bisimplicial set is homeomorphic to the successive realisations in the two simplicial directions. Thus, for every $\langle k\rangle\in \Fin_\ast$ there is a homeomorphism
\[
|(\Sing A)_X(\langle k\rangle)|\cong |[n]\mapsto |(\Sing A)(\langle k\rangle\wedge X_n)||\, .
\]
The counit of the adjunction $|-|\dashv \Sing$ gives a levelwise weak equivalence of simplicial spaces
\[
\{ [n]\mapsto |(\Sing A)(\langle k\rangle\wedge X_n)|\} \xrightarrow{\sim} \{[n] \mapsto A(\langle k\rangle\wedge X_n)\}\, .
\]
By \cite[Proposition B.37]{Global}, since $A$ is cofibrant and $X$ is finite, the simplicial space on the right is Reedy cofibrant. Hence, according to \cite[Proposition A.44]{Global} the map above induces a weak equivalence after geometric realisation. By (\ref{eq:cgammaspaceofrealisation}), the geometric realisation of the right hand side is identified with $A_{|X|}(\langle k\rangle)$. So we obtain a weak equivalence $|(\Sing A)_X(\langle k\rangle)|\simeq |(\Sing A_{|X|})(\langle k\rangle)|$. Since all identifications made are natural in $\langle k\rangle$, $A$ and $X$, the lemma follows.
\end{proof}

\subsection{$\mathbb{E}_\infty$-algebras in $\N$-graded spaces} \label{sec:graded}

The $\mathbb{E}_\infty$-spaces of Section \ref{sec:defktheory} are built from spaces of representations and come with a natural $\N$-grading by the degree of a representation. As this grading will be important later on, we will now explain what kind of extra structure we have to put on a $\Gamma$-space in order to describe an $\mathbb{E}_\infty$-algebra in $\N$-graded spaces.

Let $\N$ denote the natural numbers viewed as a discrete $1$-category.

\begin{definition} \label{def:gradedobjects}
Let $\mathcal{C}$ be an $\infty$-category. The $\infty$-category of \emph{$\N$-graded objects in $\mathcal{C}$} is defined by $\mathcal{C}^\N:=\Fun(\mathrm{N}(\N),\mathcal{C})$.
\end{definition}

Objects of $\mathcal{C}^\N$ are sequences $(X_0,X_1,X_2,\dots)$ of objects of $\mathcal{C}$.

Now assume that $\mathcal{C}$ is the underlying $\infty$-category of a symmetric monoidal $\infty$-category $\mathcal{C}^\otimes$.

\begin{notation}
We denote by $\CAlg(\mathcal{C})$ the $\infty$-category of $\mathbb{E}_\infty$-algebras in $\mathcal{C}^\otimes$.
\end{notation}

We want to consider $\mathbb{E}_\infty$-algebras in the category of $\N$-graded objects $\mathcal{C}^\N$. Assume that $\mathcal{C}$ is presentable and that the underlying monoidal product functor $\otimes\co \mathcal{C}\times \mathcal{C}\to \mathcal{C}$ preserves small colimits separately in each variable. Then $\mathcal{C}^{\N}$ can be given a symmetric monoidal structure by \emph{Day convolution}, see \cite[Section 2.2.6]{HA}. On objects Day convolution is informally described by
\[
(X_n)_{n\geq 0} \otimes (Y_n)_{n\geq 0}=\left(\coprod_{k+l=n} X_k\otimes Y_l\right)_{n\geq 0}\,.
\]
The $\infty$-category $\mathcal{C}^\N$ is again presentable \cite[5.5.3.6]{HTT}, and the tensor product preserves small colimits separately in each variable. It thus follows from \cite[3.2.3.5]{HA} that $\CAlg(\mathcal{C})$ as well as $\CAlg(\mathcal{C}^\N)$ are presentable. In particular, there is a left-adjoint, forgetful functor
\[
p_{!}\co \CAlg(\mathcal{C}^\N) \to \CAlg(\mathcal{C})
\]
which sends a graded object $(X_n)_{n\geq 0}$ to its underlying ungraded object $\coprod_{n\geq 0} X_n$.

\begin{definition} \label{def:ntensor}
Let $\N^\otimes$ denote the $1$-category whose objects are tuples
\[
(x_1,\dots,x_m) \in \N^m\,, \quad m\geq 0
\]
and whose morphisms
\[
\alpha\co (x_1,\dots,x_m)\to (y_1,\dots,y_n)
\]
are morphisms $\alpha\co \langle m\rangle\to \langle n\rangle$ in $\Fin_\ast$ satisfying $y_j=\sum_{i\in \alpha^{-1}(j)} x_i$ for all $j=1,\dots,n$.
\end{definition}

There is an obvious forgetful functor $\pi\co \N^\otimes \to \Fin_\ast$. The functor of $\infty$-categories $\mathrm{N}(\N^\otimes)\to \mathrm{N}\Fin_\ast$ induced by $\pi$ describes then the natural numbers under addition as a symmetric monoidal $\infty$-category, see \cite[2.1.2.21]{HA}.

Generally, the $\infty$-category $\CAlg(\mathcal{C}^\N)$ is equivalent to the $\infty$-category of lax symmetric monoidal functors from $\mathrm{N}(\N^\otimes)$ to $\mathcal{C}^\otimes$. When $\mathcal{C}$ is $\Spc$ with Cartesian product, there is the following simpler description: By  \cite[2.4.1.7]{HA} there is an equivalence of $\infty$-categories

\[
\CAlg(\Spc^\N)\simeq \mathrm{Fun}^{\textnormal{lax}}(\mathrm{N}(\N^\otimes),\Spc)\,,
\]
where the right hand side denotes the the $\infty$-category of \emph{lax Cartesian structures} on $\mathrm{N}(\N^\otimes)$. These are functors of $\infty$-categories $A\co \mathrm{N}(\N^\otimes)\to \Spc$ that satisfy the condition that for every $(x_1,\dots,x_m)\in \N^\otimes$ the maps $\rho^i\co \langle m\rangle\to \langle 1\rangle$, $i=1,\dots,m$ (see Definition \ref{def:cmon}) induce an equivalence in $\Spc$
\begin{equation} \label{eq:segal}
A(x_1,\dots,x_m) \xrightarrow{\sim} A(x_1)\times \cdots \times A(x_m)\, .
\end{equation}
This motivates the following definition.

\begin{definition} \label{def:nspace}
An \emph{$\N^\otimes$-space} is a functor $A\co \N^\otimes\to \Topp$ such that $A(\ast)=\ast$ (where $\ast \in \N^0$ is the empty tuple).
\end{definition}

Any $\N^\otimes$-space $A$ has an underlying $\Gamma$-space $\pi_{!}(A)\co \Fin_\ast\to \Topp$ obtained by left Kan extension along the forgetful functor $\pi\co \N^\otimes \to \Fin_\ast$. Using the pointwise formula for left Kan extensions one sees that
\begin{equation} \label{eq:nspacedecomposition}
\pi_{!}(A)(\langle m\rangle) \cong \bigsqcup_{(x_1,\dots,x_m)\in \N^m} A(x_1,\dots,x_m)
\end{equation}
Moreover, it follows that condition (\ref{eq:segal}) is satisfied by a $\N^\otimes$-space $A$ if and only if the underlying $\Gamma$-space $\pi_{!}(A)$ is special.

In summary, we can describe an $\mathbb{E}_\infty$-algebra in $\N$-graded spaces by a special $\Gamma$-space $\Fin_\ast\to \Topp$ (or a special $\Gamma$-object in $\sset$) with the added structure of a decomposition (\ref{eq:nspacedecomposition}) for every $\langle m\rangle\in \Fin_\ast$. The decompositon must be such that a map $\alpha\co \langle m\rangle\to \langle n\rangle$ takes the disjoint summand indexed by $(x_1,\dots,x_m)\in \N^m$ to the disjoint summand indexed by $(y_1,\dots,y_n)\in \N^n$, where $y_j=\sum_{i\in \alpha^{-1}(j)} x_i$. Later on we will not distinguish notationally between an $\N^\otimes$-space $A$ and its underlying $\Gamma$-space.

An $\N^\otimes$-space $A$ can be tensored with a based space of the form $X_+$; for the projection $X_+\to \langle 1 \rangle$ collapsing $X$ to $1\in \langle 1 \rangle$ induces a map
\[
q\co A_{X_+}(\langle m\rangle) \to A_{\langle 1 \rangle}(\langle m\rangle)=A(\langle m \rangle)=\bigsqcup_{(x_1,\dots,x_m)\in \N^m} A(x_1,\dots,x_m)
\]
and hence a decomposition
\[
A_{X_+}(\langle m\rangle)=\bigsqcup_{(x_1,\dots,x_m)\in \N^m} q^{-1}(A(x_1,\dots,x_m))\, .
\]

Note that tensoring the $\N^\otimes$-space $A$ with a based space $X$ does not in general give a graded object, unless the basepoint of $X$ is disjoint.

We may also tensor a $\N^\otimes$-object in $\sset$ with a based simplicial set $X_+$, by first tensoring with the levels of $X_+$ and then taking the diagonal of the resulting bisimplicial set. If the underlying $\Gamma$-space of $A$ is cofibrant, then $A_{|X|_+}$ satisfies the analogue of Lemma \ref{lem:singular}, with the same proof.

Now let $A\co \N^\otimes\to \sset$ be a $\N^\otimes$-object, whose underlying $\Gamma$-object is special. Let $\mathrm{N}(A)\in \CAlg(\Spc^\N)$ be the $\mathbb{E}_\infty$-algebra in $\N$-graded spaces corresponding to it. The same proof as that of Lemma \ref{lem:tensoring}, with $\Fin_\ast$ replaced by $\N^\otimes$ shows

\begin{corollary} \label{cor:tensor2}
For every space $X\in \Spc$, there is an equivalence in $\CAlg(\Spc^\N)$
\[
\mathrm{N}(A_{X_+})\simeq \mathrm{N}(A)\otimes X
\]
which is natural in $A$ and $X$.
\end{corollary}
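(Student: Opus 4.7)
The plan is to follow the proof of Lemma \ref{lem:tensoring} essentially verbatim, with $\Fin_\ast$ systematically replaced by $\N^\otimes$. First I would upgrade the assignment $(A,X)\mapsto A_{X_+}$ of Construction \ref{cons:tensoring2} to a functor of $\infty$-categories
\[
T'\co \CAlg(\Spc^\N)\times \Spc\to \Fun(\mathrm{N}(\N^\otimes),\Spc)
\]
by equipping $\Fun(\N^\otimes,\sset)$ with the projective model structure, observing that $(A,X)\mapsto A_X$ assembles into a $\sset$-enriched functor built out of prolongation, the smash product, and evaluation, and then taking homotopy coherent nerves after restricting to bifibrant objects; precomposition with $(-)_+$ on the second factor gives $T'$. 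Using the identification of lax Cartesian structures on $\mathrm{N}(\N^\otimes)$ with $\CAlg(\Spc^\N)$ recalled in Section \ref{sec:graded}, one then checks that the essential image of $T'$ lies in $\CAlg(\Spc^\N)$, since the Segal-type condition (\ref{eq:segal}) and the decomposition (\ref{eq:nspacedecomposition}) are both preserved under tensoring with $X_+$.

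The second step is to identify $T'$ with the tensoring $\otimes\co \CAlg(\Spc^\N)\times \Spc\to \CAlg(\Spc^\N)$, and for this I would appeal to the universal property of $\Spc$ as the free cocompletion of a point. Two things are to be verified: that $T'(A,\ast)\simeq A$, which is clear since $A_{S^0}\cong A$; and that $T'(A,-)$ preserves small colimits. The latter reduces, as in the proof of Lemma \ref{lem:tensoring}, to preservation of finite coproducts and sifted colimits separately.

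For binary coproducts $X,Y\in \Spc$, I would form the canonical comparison map
\[
\mathrm{N}(A_{X_+})\otimes \mathrm{N}(A_{Y_+})\to \mathrm{N}(A_{(X\sqcup Y)_+})
\]
induced by the inclusions $X_+\hookrightarrow (X\sqcup Y)_+\hookleftarrow Y_+$, and check that it is an equivalence by evaluating at each $(x_1,\dots,x_m)\in \N^\otimes$. Using the decomposition (\ref{eq:nspacedecomposition}) and the Segal condition (\ref{eq:segal}), together with the fact that the forgetful functor $\CAlg(\Spc^\N)\to \Spc^\N$ is symmetric monoidal for the coproduct on the source and the Day convolution on the target (the graded analogue of \cite[3.2.4.7]{HA}), this reduces the check to $\langle 1\rangle \in \N^\otimes$, where it is already handled by the argument of Lemma \ref{lem:tensoring}. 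For sifted colimits I would use that they are reflected by the forgetful functor $\CAlg(\Spc^\N)\to \Spc^\N$ (the graded analogue of \cite[3.2.3.2]{HA}, available because $\Spc^\N$ with Day convolution is presentable symmetric monoidal), then by the evaluation functors $\Spc^\N\to \Spc$ at each grading, and finally reduce to the statement that $X_+\mapsto A(\langle k\rangle \wedge X_+)$ preserves sifted homotopy colimits, which holds by the definition of the prolongation.

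The main technical obstacle is providing the graded analogues of the $\infty$-categorical structural results from Higher Algebra invoked in Lemma \ref{lem:tensoring}, namely the presentability of $\CAlg(\Spc^\N)$, the reflection of sifted colimits by the forgetful functor, and the symmetric monoidality of the forgetful functor for coproduct versus Day convolution. These are standard formal consequences of $\Spc^\N$ being presentable symmetric monoidal under Day convolution, a point already recorded in Section \ref{sec:graded}, so I expect the translation to be routine rather than genuinely new.
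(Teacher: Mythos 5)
Your proposal is correct and takes the same route as the paper: the paper's own justification for Corollary \ref{cor:tensor2} is precisely the one-line remark that the proof of Lemma \ref{lem:tensoring} carries over with $\Fin_\ast$ replaced by $\N^\otimes$, which is what you have elaborated. The only minor notational slip is writing ``$\langle 1\rangle\in\N^\otimes$'' (the objects of $\N^\otimes$ are tuples, not $\langle 1\rangle$; the check is really over the summands $(n)\in\N^1\subseteq\N^\otimes$, i.e., after passing to each charge component via (\ref{eq:nspacedecomposition})), and similarly the sifted-colimit reduction to $X\mapsto A(\langle k\rangle\wedge X_+)$ should pass through the charge-$n$ disjoint summands; both are easily repaired and do not affect the argument.
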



\section{The $\Gamma$-space for deformation $K$-theory} \label{sec:defktheory}

Let $G$ be a discrete group. The mapping space $\Map(G,U(n))$ is given the compact open topology, equivalently the product topology by writing it as $U(n)^G$. We give
\[
\Hom(G,U(n))\subseteq \Map(G,U(n))
\]
the subspace topology. It makes $\Hom(G,U(n))$ into a compact Hausdorff space. The group $U(n)$ operates continuously on $\Hom(G,U(n))$ by conjugation.

We now recall the $\Gamma$-space for unitary deformation $K$-theory of $G$ due to Lawson \cite{Lawson}; it packages the homomorphism spaces $\Hom(G,U(n))$ for various $n$ into a single homotopical object. Let $\C^n$, $0\leq n\leq \infty$ be equipped with the standard Hermitian inner product. For $m< \infty$ let $V(n,\C^m)$ denote the compact Stiefel manifold of $n$-dimensional orthonormal frames in $\C^m$. It carries a free action of $U(n)$ whose quotient space is the Grassmannian $Gr(n,\C^m)$. If $m=\infty$, then $V(n,\C^\infty)$ shall be viewed as the colimit in $\Topp$
\[
V(n,\C^\infty)=\textnormal{colim}_{m} V(n,\C^m)\, .
\]
Consider now the fibre bundle
\[
\Hom(G,U(n)) \to V(n,\C^m)\times_{U(n)} \Hom(G,U(n)) \to Gr(n,\C^m)\, .
\]
An element of the total space is precisely a pair $(V,f)$ where $V\subseteq\C^m$ is an $n$-dimensional linear subspace and $f\co G\to U(V)$ is a unitary representation of $G$ on it. If $m=\infty$, then the topology on the total space is again the colimit topology as the spaces $\Hom(G,U(n))$ are compact and Hausdorff.

\begin{construction} \label{cons:kdef}
The $\Gamma$-space
\[
\Kdef(G)_{\leq m}\co \Fin_\ast\to \Topp
\]
is defined on $\langle 1\rangle\in \Fin_\ast$ by
\[
\Kdef(G)_{\leq m}(\langle 1\rangle)= \bigsqcup_{n\geq 0} V(n,\C^m) \times_{U(n)} \Hom(G,U(n))
\]
and on $\langle k\rangle\in \Fin_\ast$ by the subspace
\[
\Kdef(G)_{\leq m}(\langle k\rangle)\subseteq\Kdef(G)(\langle 1\rangle)_{\leq m}^k
\]
consisting of those $k$-tuples $((V_1,f_1),\dots,(V_k,f_k))$ for which $V_i$ and $V_j$ are orthogonal to each other if $i\neq j$. If $\alpha\co \langle k\rangle \to \langle l\rangle$ is a morphism in $\Fin_\ast$, then
\[
\Kdef(G)_{\leq m}(\alpha)\co \Kdef(G)_{\leq m}(\langle k\rangle)\to \Kdef(G)_{\leq m}(\langle l\rangle)
\]
is defined by
\[
\Kdef(G)_{\leq m}(\alpha)((V_1,f_1),\dots,(V_k,f_k))=((W_1,g_1),\dots,(W_l,g_l))
\]
with $W_j=\bigoplus_{i \in \alpha^{-1}(j)} V_i$ and $g_j=\bigoplus_{i \in \alpha^{-1}(j)} f_i$ and where the direct sum of representations has the obvious meaning. (If $\alpha^{-1}(j)$ is empty, then $W_j=0$ and $f_j$ is the trivial homomorphism.)
\end{construction}

When $m=\infty$ we denote the $\Gamma$-space so defined by $\Kdef(G)$. This $\Gamma$-space is special; this can be seen by noting that the inclusion of the trivial group $1\hookrightarrow G$ induces a pullback square
\[
\xymatrix{
\Kdef(G)(\langle k\rangle) \ar@{^(->}[r] \ar[d] & \Kdef(G)(\langle 1\rangle)^k \ar[d] \\
\Kdef(1)(\langle k\rangle)  \ar@{^(->}[r]^-{\sim} & \Kdef(1)(\langle 1\rangle)^k
}
\]
in which the right map is a fibration. The bottom map is a bijection on components, and on each component it is an inclusion of the form
\[
X(i_1,\dots,i_k)\hookrightarrow Gr(i_1,\C^\infty)\times\cdots \times Gr(i_k,\C^\infty)
\]
for some $i_1,\dots,i_k\geq 0$, where $X(i_1,\dots,i_k)$ is the subspace of the product carved out by the condition of pairwise orthogonality. But this inclusion is a homotopy equivalence, in particular a weak equivalence, and hence the top map in the pullback square is also a weak equivalence. 

\begin{lemma} \label{lem:cofibrant}
$\Kdef(G)$ is cofibrant if $G$ is finitely generated.
\end{lemma}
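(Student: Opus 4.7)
The plan is to verify the latching condition directly by exploiting the $\N$-grading of $\Kdef(G)$ from Section~\ref{sec:graded}. The space $\Kdef(G)(\langle n\rangle)$ decomposes as a disjoint union of components
\[
X_{(k_1,\dots,k_n)} = V(k,\C^\infty)\times_{U(k_1)\times\cdots\times U(k_n)} \prod_{i=1}^n \Hom(G,U(k_i)),\quad k=k_1+\cdots+k_n,
\]
indexed by $(k_1,\dots,k_n)\in \N^n$. For any proper $S\subsetneq \langle n\rangle^\circ$, the structure map $\Kdef(G)(S_+)\to \Kdef(G)(\langle n\rangle)$ is a homeomorphism onto the sub-coproduct of those components with $k_i=0$ for $i\notin S$. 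Consequently the latching colimit is identified with the $\Sigma_n$-invariant clopen subspace $L_n\subseteq\Kdef(G)(\langle n\rangle)$ consisting of tuples with at least one $k_i=0$, and the latching map is the inclusion of $L_n$ whose complement is $C_n = \bigsqcup_{k_1,\dots,k_n \geq 1} X_{(k_1,\dots,k_n)}$.

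Being a $\Sigma_n$-equivariant clopen inclusion, the latching map is a $\Sigma_n$-cofibration as soon as $C_n$ admits a $\Sigma_n$-CW structure. I would build such a structure orbit-by-orbit over the $\Sigma_n$-action on index tuples: within a single orbit it suffices to equip one representative $X_{(k_1,\dots,k_n)}$ with a CW structure equivariant for its stabilizer and then induce up along $\Sigma_n$. The building blocks are $V(k,\C^\infty)=EU(k)$, a free $U(k)$-CW complex (a colimit of compact Stiefel manifolds, each equivariantly triangulable by Illman's theorem), and $\Hom(G,U(k_i))$, which for finitely generated $G$ is a closed $U(k_i)$-invariant semi-algebraic subset of $U(k_i)^s$ and thereby admits a $U(k_i)$-equivariant CW structure. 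The balanced product then inherits a CW structure on which the relevant stabilizer acts cellularly.

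The main obstacle is the coherent bookkeeping of these equivariant triangulations across $\Sigma_n$-orbits. An alternative that would bypass most of this work, if available in Schwede's framework, is a general criterion asserting that $\Gamma$-spaces built out of free compact-group actions and equivariantly triangulable spaces are automatically cofibrant; in that case one would simply assemble the ingredients and cite the criterion.
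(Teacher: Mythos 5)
Your skeleton is correct and matches the intent, but the paper's proof is essentially a pointer to Schwede's Example~6.3.16 (which proves cofibrancy of $\Kdef(1)$) plus the observation that the one step requiring adaptation — equivariant cofibrancy of
\[
L(\C^{k_1}\oplus\cdots\oplus\C^{k_n},\C^m)\times \Hom(G,U(k_1))\times\cdots\times\Hom(G,U(k_n))
\]
for the diagonal $(U(k_1)\times\cdots\times U(k_n))$-action — follows from the Park--Suh equivariant triangulation theorem for algebraic actions of compact Lie groups on real algebraic varieties. Your direct unwinding of the latching condition (decomposition into clopen components $X_{(k_1,\dots,k_n)}$, identification of the latching map as a clopen inclusion, reduction to building a $\Sigma_n$-CW structure on $C_n$ orbit by orbit) is sound and is in effect what Schwede's argument does under the hood.

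Where you stop short, however, is exactly the substance of the proof, not mere bookkeeping. You propose giving each $\Hom(G,U(k_i))$ a $U(k_i)$-equivariant CW structure separately and then asserting that the balanced product ``inherits a CW structure on which the relevant stabilizer acts cellularly.'' But the stabilizer $H\le\Sigma_n$ of the dimension vector is a Young subgroup that \emph{permutes} the factors $\Hom(G,U(k_i))$ and the corresponding blocks of $V(k,\C^\infty)$, so a product of $U(k_i)$-CW structures built one factor at a time has no reason to carry a compatible $H$-action. What is actually needed is a CW structure equivariant for the semidirect product $\Gamma:=\bigl(\prod_i U(k_i)\bigr)\rtimes H$ on $V(k,\C^m)\times\prod_i\Hom(G,U(k_i))$: since $\Gamma$ is a compact Lie group acting algebraically on a real algebraic variety (for finitely generated $G$), Park--Suh gives a $\Gamma$-CW structure; the normal subgroup $\prod_i U(k_i)$ acts freely (via the Stiefel factor), so the quotient $X_{(k_1,\dots,k_n)}$ is an $H$-CW complex, and inducing up along $\Sigma_n\times_H(-)$ gives the $\Sigma_n$-CW structure on the orbit. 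This is the citation you were missing — it is precisely the reference the paper supplies — and it cannot be replaced by Illman's theorem applied separately to each factor, since the separate triangulations carry no $H$-symmetry. With that ingredient your argument closes; without it the central equivariance claim is unsupported. The alternative you float (a general cofibrancy criterion in Schwede's framework) is in fact the route the paper takes.
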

\begin{proof}
This can be proved in the same way as \cite[Example 6.3.16]{Global} which, considered non-equivariantly, is the special case $\Kdef(1)$. For $0\leq k\leq m$ let $L(\C^k,\C^m)$ denote the space of linear isometries $\C^k\hookrightarrow \C^m$ equipped with the compact open topology. The only step that needs adaptation is the fact that for any dimension vector $(i_1,\dots,i_k)\in \N^k$ and $m\geq \sum_j i_j$ the product
\[
L(\C^{i_1}\oplus\cdots \oplus \C^{i_k},\C^m)\times \Hom(G,U(i_1))\times \cdots \times \Hom(G,U(i_k))
\]
admits an $(U(i_1)\times \cdots \times U(i_k))$-CW-structure for the diagonal action and is therefore equivariantly cofibrant. But this follows from the equivariant triangulation theorem of Park and Suh \cite[Theorem 1.3]{Park}; since $G$ is finitely generated, $\Hom(G,U(i_1)\times \cdots \times U(i_k))$ is a real algebraic variety on which $U(i_1)\times \cdots \times U(i_k)$ acts algebraically.
\end{proof}

There is an analogue for the conjugation quotient
\[
\Rep(G,U(n))=\Hom(G,U(n))/U(n)\, .
\]
The direct sum of representations gives $\bigsqcup_{n\geq 0} \Rep(G,U(n))$ the structure of an abelian topological monoid. An abelian monoid $M$ in $\Topp$ can always be viewed as a special $\Gamma$-space $M\co \Fin_\ast\to \Topp$ by setting $M(\langle k\rangle)=M^k$ and $M(\alpha)(m_1,\dots,m_k)=(n_1,\dots,n_l)$ on morphisms $\alpha\co \langle k\rangle \to \langle l\rangle$ where $n_j=\sum_{i\in \alpha^{-1}(j)} m_i$. (If $\alpha^{-1}(j)$ is empty, then $n_j$ is the neutral element.)

\begin{construction} \label{cons:rdef} We let
\[
\Rdef(G)\co \Fin_\ast\to \Topp
\]
be the $\Gamma$-space associated with the abelian monoid $\bigsqcup_{n\geq 0} \Rep(G,U(n))$.
\end{construction}

The latching maps
\[
\textnormal{colim}_{S\subsetneq \{1,\dots,n\}} \Rdef(G)(S_+)\to \Rdef(G)(\langle n\rangle)
\]
are iterated pushout-products of the basepoint inclusion $\ast=\Rep(G,U(0))\to \Rdef(G)(\langle 1\rangle)$. They are cofibrations if $\Rep(G,U(n))$ is cofibrant for every $n\geq 0$, and this follows again as in Lemma \ref{lem:cofibrant} from the equivariant triangulation theorem applied to $\Hom(G,U(n))$. In particular, we have

\begin{lemma}
$\Rdef(G)$ is cofibrant if $G$ is finitely generated.
\end{lemma}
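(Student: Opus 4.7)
The plan is to flesh out the sketch provided in the paragraph preceding the lemma. First I would unpack the definition of cofibrancy: for the $\Gamma$-space $\Rdef(G)$, which is the levelwise power of the topological abelian monoid $M := \bigsqcup_{n \geq 0}\Rep(G,U(n))$ (so $\Rdef(G)(\langle k\rangle) = M^k$), I need to check that each latching map
\[
\mathrm{colim}_{S \subsetneq \langle n\rangle^\circ}\, M^{|S|} \longrightarrow M^n
\]
is a $\Sigma_n$-cofibration. The transition maps in the colimit are precisely the inclusions padding with the monoid unit $\ast \in \Rep(G,U(0))$, so this latching map is the inclusion of the ``fat wedge'' $T^n M \hookrightarrow M^n$ consisting of tuples at least one of whose coordinates equals $\ast$, with its natural $\Sigma_n$-action by permutation.

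Second, I would identify this fat wedge inclusion with the $n$-fold pushout-product of the based map $\ast \hookrightarrow M$ with itself. By the usual pushout-product formalism (see e.g. \cite[Proposition B.54]{Global}), if $(M,\ast)$ is a relative CW pair, then the $n$-fold pushout-product carries a canonical $\Sigma_n$-CW structure, hence is a $\Sigma_n$-cofibration. So the problem reduces to showing that $(M,\ast)$ is a CW pair, which (because $\Rep(G,U(0)) = \ast$ sits as a clopen component and the remaining components are $\Rep(G,U(n))$ for $n\geq 1$) is equivalent to showing that each $\Rep(G,U(n))$ admits a CW structure.

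Third, I would invoke the equivariant triangulation theorem of Park--Suh \cite[Theorem 1.3]{Park}, exactly as in the proof of Lemma \ref{lem:cofibrant}. Since $G$ is finitely generated, $\Hom(G,U(n)) \subseteq U(n)^{\times s}$ is a compact real algebraic variety, on which $U(n)$ acts algebraically by conjugation. Therefore $\Hom(G,U(n))$ carries a $U(n)$-CW structure. Taking the quotient by this action produces a CW structure on $\Rep(G,U(n)) = \Hom(G,U(n))/U(n)$, which in particular makes the basepoint inclusion $\ast \hookrightarrow \Rep(G,U(n))$ a cofibration and gives $(M,\ast)$ the structure of a relative CW complex.

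I expect no serious obstacle: the argument is essentially the same as (and simpler than) the proof of Lemma \ref{lem:cofibrant}, since here there are no Stiefel manifolds or borel-type fibrations to handle. The only point requiring care is the identification of the latching map with the iterated pushout-product, which is automatic from the explicit description of $\Rdef(G)$ on morphisms as the map $M^k \to M^l$, $(m_1,\dots,m_k)\mapsto(\sum_{i \in \alpha^{-1}(1)} m_i,\dots,\sum_{i \in \alpha^{-1}(l)} m_i)$, filling in the monoid unit on empty preimages.
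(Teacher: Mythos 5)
Your proposal is correct and follows essentially the same route as the paper: you identify the latching maps as iterated pushout-products of the unit inclusion $\ast \hookrightarrow M$, reduce to cofibrancy of each $\Rep(G,U(n))$, and deduce that from the Park--Suh equivariant triangulation of $\Hom(G,U(n))$. The extra detail you supply (the explicit fat-wedge description and the remark that the quotient of a $U(n)$-CW structure gives a CW structure on $\Rep(G,U(n))$) is a correct unpacking of what the paper leaves implicit.
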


Now both $\Kdef(G)_{\leq m}$ and $\Rdef(G)_{\leq m}$ can be upgraded to $\N^\otimes$-spaces;

\begin{lemma}
For $(x_1,\dots,x_k)\in \N^k$ we let $\Kdef(G)_{\leq m}(x_1,\dots,x_k) \subseteq \Kdef(G)_{\leq m}(\langle k\rangle)$ be the subspace of all $k$-tuples $((V_1,f_1),\dots,(V_k,f_k))$ with $\dim_{\C}(V_i)=x_i$ for all $i=1,\dots,k$.

Similarly, let $\Rdef(G)_{\leq m}(x_1,\dots,x_k)\subseteq \Rdef(G)_{\leq m}(\langle k\rangle)$ be the subspace of $k$-tuples of representations $([f_1],\dots,[f_k])$ with $\dim([f_i])=x_i$ for all $i=1,\dots,k$.

So defined, both $\Kdef(G)_{\leq m}$ and $\Rdef(G)_{\leq m}$ are $\N^\otimes$-spaces in a natural way.
\end{lemma}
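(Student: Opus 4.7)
The plan is to observe that both $\Kdef(G)_{\leq m}$ and $\Rdef(G)_{\leq m}$ already come with all the data of a functor out of $\N^\otimes$ once the underlying $\Gamma$-space structure is in place; what remains is to check that the structure maps respect the dimension labelling. Since $\N^\otimes$ sits over $\Fin_\ast$ via the forgetful functor $\pi\colon\N^\otimes\to\Fin_\ast$, I would define $\Kdef(G)_{\leq m}$ and $\Rdef(G)_{\leq m}$ on objects of $\N^\otimes$ by the displayed subspaces, and on a morphism $\alpha\colon (x_1,\dots,x_k)\to (y_1,\dots,y_l)$ I would simply take the restriction of the $\Gamma$-space map $\Kdef(G)_{\leq m}(\pi(\alpha))$, respectively $\Rdef(G)_{\leq m}(\pi(\alpha))$, to the subspace $\Kdef(G)_{\leq m}(x_1,\dots,x_k)$, respectively $\Rdef(G)_{\leq m}(x_1,\dots,x_k)$.

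The heart of the verification is that this restriction lands in the correct subspace of the target. For $\Kdef(G)_{\leq m}$, if $((V_1,f_1),\dots,(V_k,f_k))$ lies in $\Kdef(G)_{\leq m}(x_1,\dots,x_k)$, then by Construction \ref{cons:kdef} the image under $\Kdef(G)_{\leq m}(\pi(\alpha))$ has components $W_j=\bigoplus_{i\in\alpha^{-1}(j)} V_i$, so $\dim_\C W_j=\sum_{i\in\alpha^{-1}(j)} x_i=y_j$ by the very definition of a morphism in $\N^\otimes$ (Definition \ref{def:ntensor}). Hence the image lies in $\Kdef(G)_{\leq m}(y_1,\dots,y_l)$, as required. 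For $\Rdef(G)_{\leq m}$, the same argument goes through because the dimension of a unitary representation is invariant under conjugation and additive under direct sums, so the abelian-monoid structure map on $\bigsqcup_n \Rep(G,U(n))$ sends the dimension grading to its additive image.

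Functoriality on $\N^\otimes$ then follows immediately from functoriality of the underlying $\Gamma$-space on $\Fin_\ast$, since composition in $\N^\otimes$ projects to composition in $\Fin_\ast$ under $\pi$, and we have just argued that the restricted maps exist; two restrictions of functorial maps compose as the restriction of the composite. The basepoint condition $A(\ast)=\ast$ of Definition \ref{def:nspace} corresponds to the empty tuple in $\N^0$, which in both cases gives $\Kdef(G)_{\leq m}(\langle 0\rangle)=\Rdef(G)_{\leq m}(\langle 0\rangle)=\ast$ (the unique $0$-dimensional representation). Naturality of the grading is straightforward, so there is in fact no substantive obstacle here; the lemma is essentially a bookkeeping statement that the $\Gamma$-space structures of Constructions \ref{cons:kdef} and \ref{cons:rdef} refine through $\pi$ to $\N^\otimes$-spaces, and the refinement is forced by dimension additivity, which is the property that links these constructions to the graded setting of Section \ref{sec:graded}.
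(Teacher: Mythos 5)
Your argument is correct and is exactly the elaboration the paper leaves implicit: the paper's proof of this lemma is the single sentence ``This is clear,'' and what you have done is spell out why it is clear, namely that a morphism in $\N^\otimes$ is by definition a map in $\Fin_\ast$ compatible with the dimension bookkeeping, and the structure maps of Constructions \ref{cons:kdef} and \ref{cons:rdef} are dimension-additive, so they automatically restrict to the graded subspaces. No gap; same approach, just written out.
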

\begin{proof}
This is clear.
\end{proof}

By composing with the singular complex and applying the nerve $\mathrm{N}(-)$, it follows that $\Kdef(G)$ and $\Rdef(G)$ define $\mathbb{E}_\infty$-algebras in $\N$-graded spaces. Abusing notation slightly, we shall denote these $\mathbb{E}_\infty$-algebras also by $\Kdef(G)$ and $\Rdef(G)$, respectively.

Let $\widehat{A}=\Hom(A,S^1)$ denote the Pontryagin dual of $A$.

\begin{proposition} \label{prop:key}
Let $G,A$ be finitely generated discrete groups and $A$ abelian. Then twisting $G$-representations by characters of $A$ induces equivalences of $\mathbb{E}_\infty$-algebras in $\Spc^\N$
\begin{enumerate}
\item[(1)] $\Kdef(G\times A)\simeq \Kdef(G) \otimes \widehat{A}$
\item[(2)] $\Rdef(G\times A)\simeq \Rdef(G) \otimes \widehat{A}$
\end{enumerate}
and these equivalences are natural in $G$ and $A$.
\end{proposition}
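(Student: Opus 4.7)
My plan is to construct a natural twisting map of $\N^\otimes$-spaces and verify it is a levelwise weak equivalence; the identification with $\Kdef(G)\otimes \widehat{A}$ as an object of $\CAlg(\Spc^\N)$ then follows from Corollary \ref{cor:tensor2} (after passing through singular complexes via Lemma \ref{lem:singular}). By the coend description of the prolonged $\Gamma$-space, a point of $\Kdef(G)_{(\widehat{A})_+}(\langle 1\rangle)=\Kdef(G)(\widehat{A}_+)$ is represented by a tuple $((V_i,\phi_i,\chi_i))_{i=1}^j$ with $V_i\subset \C^\infty$ pairwise orthogonal, $\phi_i\co G\to U(V_i)$ a $G$-representation, and $\chi_i\in \widehat{A}$ a character, modulo the relations that collapse factors labelled by the basepoint of $\widehat{A}_+$. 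I define
\[
\Phi\co \Kdef(G)_{(\widehat{A})_+} \to \Kdef(G\times A)
\]
at level $\langle 1\rangle$ by sending this datum to the $(G\times A)$-representation $(g,a)\mapsto \bigoplus_i \phi_i(g)\chi_i(a)$ on $\bigoplus_i V_i$, and by the analogous formula on higher levels. Continuity in the characters, compatibility with the coend relations and the $\N^\otimes$-structure (in particular with the grading by total dimension), and naturality in $G$ and $A$ are readily checked.

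Since $A$ is finitely generated abelian, $\widehat{A}$ is a compact Lie group and $\widehat{A}_+$ is a finite based CW complex; by the preservation result cited after Construction \ref{cons:tensoring2}, the underlying $\Gamma$-space of $\Kdef(G)_{(\widehat{A})_+}$ is therefore again special and cofibrant. By specialness of both sides and naturality of the Segal maps, checking $\Phi$ is a weak equivalence reduces to the level $\langle 1\rangle$, and by the $\N$-grading to each dimension $n$. At a fixed $n$, both spaces map naturally to the Grassmannian $Gr(n,\C^\infty)$ by sending a configuration to $\bigoplus_i V_i$ and a representation to its carrier subspace. Pulling back along the principal bundle $V(n,\C^\infty)\to Gr(n,\C^\infty)$ and using that $V(n,\C^\infty)$ is contractible with free $U(n)$-action, the comparison reduces to showing that the induced $U(n)$-equivariant map of fibres over a fixed $V\cong \C^n$,
\[
\Phi_V\co \mathcal{F}_V \to \Hom(G\times A, U(V)),
\]
is a weak equivalence; here $\mathcal{F}_V$ is the space of labelled orthogonal decompositions of $V$, modulo the coend relations.

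The key algebraic input is that every finite dimensional $(G\times A)$-representation on $V$ decomposes uniquely into $A$-isotypic components $V=\bigoplus_\chi V_\chi$, each of which is a $G$-subrepresentation because $G$ and $A$ commute. This furnishes a pointwise inverse to $\Phi_V$, which however is not continuous: when two characters collide, their isotypic components merge. The coend identifications already built into $\mathcal{F}_V$ permit exactly this merging, so the two sides are compatible. My plan to turn this into a weak equivalence is to stratify both $\mathcal{F}_V$ and $\Hom(G\times A,U(V))$ by the isotypic type (the unordered collection of characters occurring with their positive multiplicities summing to $n$); on each open stratum $\Phi_V$ is a homeomorphism, and the compatibility of the stratifications along the closure relations allows a local-to-global comparison of the total spaces.

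Part (2) follows from an entirely parallel construction, since the $U(n)$-action by conjugation preserves $A$-isotypic decompositions, so twisting descends to conjugacy classes. The main obstacle is the content of the previous paragraph: converting the discontinuous pointwise isotypic inverse into a proof that $\Phi_V$ is a weak equivalence. Once this is in hand, the construction of $\Phi$, its naturality, the reduction to fibres, and the corresponding statement for $\Rdef$ are all routine.
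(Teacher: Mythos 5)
Your map $\Phi$ is exactly the right one, built from the same isotypic decomposition $(V,f)\mapsto\{(V_i,f_i',\phi_i)\}$ used in the paper, and your observation that the coend relations in $\Kdef(G)_{\widehat{A}_+}$ permit the merging of isotypic pieces when characters collide is the correct diagnosis of why the construction is well-behaved. However, you stop just short of the conclusion this observation supports, and the stratification plan you substitute for it is a genuine gap: you have not said how the open-stratum homeomorphisms glue across closure relations into a weak equivalence of the whole space, and such an argument would require nontrivial control on the stratification (e.g.\ that it is a conically smooth or CW-like decomposition on both sides, with compatible tubular data) that you have not set up. As stated, ``local-to-global comparison of the total spaces'' is a label, not a proof.

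The key simplification you are missing is this: $\Phi$ is not merely a weak equivalence levelwise, it is a homeomorphism, and this is immediate from compactness once you work with the truncated $\Gamma$-spaces $\Kdef(G)_{\leq m}$. For fixed $m$ and $\langle l\rangle$, the map
\[
g_{m,\langle l\rangle}\co \Kdef(G)_{\leq m}(\langle l\rangle\wedge \widehat{A}_+) \to \Kdef(G\times A)_{\leq m}(\langle l\rangle)
\]
is a continuous \emph{bijection} (your isotypic decomposition of a $(G\times A)$-representation provides the two-sided inverse at the level of sets; you do not need it to be continuous a priori). The domain is a quotient of a disjoint union that is already covered by the finitely many pieces with $k\le m$, each compact, so the domain is compact; the codomain is built from compact Hausdorff spaces (Stiefel manifolds $V(n,\C^m)$ with $n\le m$ and compact Hausdorff homomorphism spaces), hence is Hausdorff. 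A continuous bijection from a compact space to a Hausdorff space is a homeomorphism, so $g_{m,\langle l\rangle}$ is one, and in particular its inverse is automatically continuous — the very continuity you were worried about is forced. Passing to the colimit over $m$ (using that $\langle l\rangle\wedge\widehat{A}_+$ is compact Hausdorff, so evaluating $\Kdef(G)$ on it commutes with the sequential colimit) yields the homeomorphism of $\Gamma$-spaces, which you then feed into Lemma \ref{lem:singular} and Corollary \ref{cor:tensor2} exactly as you intended. With this replacement, the rest of your outline (naturality, compatibility with the $\N^\otimes$-grading, and the parallel argument for $\Rdef$) goes through without change, and the fibration-over-Grassmannian reduction becomes unnecessary.
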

\begin{proof}
We shall only prove (1) as the proof of (2) is analogous.

We will prove an isomorphism of $\Gamma$-spaces
\[
\Kdef(G)_{\widehat{A}_+} \cong \Kdef(G\times A) 
\]
and subsequently observe that this isomorphism respects the graded structure. Thus, it induces an isomorphism of $\N^\otimes$-spaces, and this implies the claim by Lemma \ref{lem:singular} and Corollary \ref{cor:tensor2}.

Let $V\subseteq\C^\infty$ be a finite dimensional linear subspace. A homomorphism $f\co G\times A\to U(V)$ corresponds to a pair $(f',\phi)$ where $f'\co G\to U(V)$ and $\phi \co A\to U(V)$ are homomorphisms such that $f'(g)\phi(a)=\phi(a)f'(g)$ for all $(g,a)\in G\times A$. Since $A$ is abelian, $\phi$ splits $V$ into an orthogonal sum $\bigoplus_{i=1}^k V_i$ such that the action of $\phi$ on each summand is determined by a character $\phi_i\in \Hom(A,U(1))=\widehat{A}$, all $V_i$ are non-zero and the $\phi_i$ are mutually distinct. By commutativity of $f'$ and $\phi$, $f'$ restricts to representations $f_i'\co G\to U(V_i)$. This gives a bijective correspondence
\begin{equation} \label{eq:correspondence}
(V,f)\mapsto \{(V_i,f_i',\phi_i)\}_{i=1}^k\, .
\end{equation}

Fix $\langle l\rangle \in \Fin_\ast$ and $m\geq 0$ and define a homeomorphism
\[
g_{m,\langle l\rangle}\co \Kdef(G)_{\leq m}(\langle l\rangle\wedge \widehat{A}_+) \to \Kdef(G\times A)_{\leq m}(\langle l\rangle)\,.
\]
as follows. Recall that the left hand side is a quotient space of the coproduct
\[
\bigsqcup_{k\geq 0} \Kdef(G)_{\leq m}(\langle k\rangle) \times [(\langle l \rangle^\circ \times \widehat{A})_{+}]^k\,.
\]
Under the equivalence relation (\ref{eq:equivalencerelation}) every element in the coproduct is equivalent to one of the form
\[
((V_i,f'_i)_{i=1}^k,(n_i,\phi_i)_{i=1}^k),
\]
that is, to one where the extra basepoint of $(\langle l\rangle^\circ \times \widehat{A})_+$ does not appear. To such an element we associate
\[
(W_j,f_j)_{j=1}^l \in \Kdef(G\times A)_{\leq m}(\langle l\rangle)
\]
with $(W_j,f_j)$ defined by
\[
(W_j,f_j) \mapsto \{(V_i,f_i',\phi_i)\mid i: n_i=j\}
\]
under the correspondence (\ref{eq:correspondence}). This defines the continuous bijection $g_{m,\langle l\rangle}$; as the domain of $g_{m,\langle l\rangle}$ is compact and the codomain is Hausdorff, $g_{m,\langle l\rangle}$ is in fact a homeomorphism.

For $(x_1,\dots,x_l)\in \N^l$, the element $((V_i,f'_i)_{i=1}^k,(n_i,\phi_i)_{i=1}^k)$ defines an element of the subspace $\Kdef(G)_{\widehat{A}_+}(x_1,\dots,x_l)$ precisely if $\sum_{i: n_i=j} \dim_{\C}(V_i)=x_j$ for all $j=1,\dots,l$. It is therefore mapped to a tuple $(W_j,f_j)_{j=1}^l$ with $\dim_{\C}(W_j)=x_j$, which shows that the isomorphism is in fact one of $\N^\otimes$-spaces.

Taking the colimit over $m$, and noting that $ \Kdef(G)(\langle l\rangle \wedge \widehat{A}_+)$ carries the colimit topology as $\langle l \rangle \wedge \widehat{A}_+$ is compact Hausdorff, we hence obtain a homeomorphism
\[
g_{\langle l\rangle}\co \Kdef(G)(\langle l \rangle\wedge \widehat{A}_+) \to \Kdef(G\times A)(\langle l \rangle)\, .
\]
By inspection, this map is natural in $\langle l \rangle\in \Fin_\ast$ and thus defines an isomorphism of $\N^\otimes$-spaces. Evidently, this isomorphism is natural in $G$ and $A$.
\end{proof}

\begin{remark} \label{rem:productformula}
The equivalence $\Kdef(G\times A)\simeq \Kdef(G)\otimes \widehat{A}$ may be viewed as a special instance of Lawson's product formula \cite{Lawson} applied to $G\times A$, but one which holds \emph{before} passing to group completions. Indeed, after group completion we have an equivalence of $\mathbb{E}_\infty$-groups
\[
\Omega B\, \Kdef(G\times A) \simeq \Omega B\, \Kdef(G) \otimes \widehat{A}\,,
\]
which is precisely what the product formula predicts:
\[
k^{\mathrm{def}}(G\times A)\simeq k^{\mathrm{def}}(G)\wedge_{ku} k^{\mathrm{def}}(A) \simeq k^{\mathrm{def}}(G)\wedge \widehat{A}_+\, ,
\]
where $k^{\mathrm{def}}(G)$ is the connective spectrum corresponding to $\Omega B\, \Kdef(G)$.
\end{remark}

\begin{remark} \label{rem:pathconnected}
Implicit in the proof of Proposition \ref{prop:key} is the fact that if $A=\Z^r$, then the projection $G\times A\to G$ induces a bijection of path-components
\[
\pi_0 \Hom(G,U(n))\xrightarrow{\cong} \pi_0 \Hom(G\times \Z^r,U(n))
\]
for all $n\geq 0$.
\end{remark}

\begin{variant} \label{var:based}
There are sub-$\Gamma$-spaces $\Kdef_{\mathds{1}}(G)\subseteq \Kdef(G)$ and $\Rdef_{\mathds{1}}(G)\subseteq \Rdef(G)$ defined by using only the path-component of the trivial homomorphism
\[
\Hom_{\mathds{1}}(G,U(n))\subseteq \Hom(G,U(n))
\]
in Constructions \ref{cons:kdef} and \ref{cons:rdef}. The proof of Proposition \ref{prop:key} applies to show that for any finitely generated abelian group $A$ there are equivalences of $\mathbb{E}_\infty$-algebras in $\Spc^\N$
\begin{enumerate}
\item[(1)] $\Kdef_{\mathds{1}}(G\times A)\simeq \Kdef_{\mathds{1}}(G)\otimes \widehat{\mathrm{Free}(A)}$
\item[(2)] $\Rdef_{\mathds{1}}(G\times A)\simeq \Rdef_{\mathds{1}}(G)\otimes \widehat{\mathrm{Free}(A)}$
\end{enumerate}
where $\mathrm{Free}(A)=A/\mathrm{Tors}(A)\cong \Z^{\mathrm{rk}(A)}$ is the free part of $A$. These equivalences are natural in $G$ and $A$.
\end{variant}

Note that if $A$ is a finite abelian group, then $\Hom_{\mathds{1}}(A,U(n))$ consists of only the trivial homomorphism (see Remark \ref{rem:finitegroup}). Restriction to $A$ gives a map
\[
\Hom(G\times A,U(n))\to \Hom(A,U(n))
\]
which shows that $\Hom_{\mathds{1}}(G\times A,U(n))$ consists of homomorphisms which restrict to the trivial homomorphism on $A$. This explains why in the equivalences above only the free part of a finitely generated abelian group shows up.


\section{Homology stability} \label{sec:hs}

\subsection{Application of the local-to-global principle} \label{sec:hsproof}

For every $n\geq 1$, the natural embedding $U(n)\subseteq U(n+1)$ induces a map
\[
\Hom_{\mathds{1}}(G,U(n)) \to \Hom_{\mathds{1}}(G,U(n+1)) \,.
\]
The goal of this section is to prove the following theorem.

\begin{theorem} \label{thm:main1}
The sequence $\Hom_{\mathds{1}}(G,U(1)) \to \Hom_{\mathds{1}}(G,U(2)) \to \cdots$ satisfies homology stability for a finitely generated group $G$ if and only if it satisfies homology stability for $G\times \Z^r$ for every $r\geq 0$.
\end{theorem}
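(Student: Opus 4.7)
The only non-trivial direction is ``if'', since setting $r = 0$ yields the converse. The strategy is to combine the equivalence of Variant~\ref{var:based},
\[
\Kdef_{\mathds{1}}(G \times \Z^r) \simeq \Kdef_{\mathds{1}}(G) \otimes T^r,
\]
where $T^r = \widehat{\Z^r}$ is the $r$-torus, with the local-to-global principle for factorisation homology of Kupers--Miller~\cite{KM18}. For any finitely generated group $H$, the $n$-th graded piece of $\Kdef_{\mathds{1}}(H)$ is equivalent to the Borel construction $\Hom_{\mathds{1}}(H,U(n))_{hU(n)}$, and the stabilisation induced by the $\mathbb{E}_\infty$-structure corresponds to direct sum with the trivial $1$-dimensional representation $\mathds{1}$. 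Kupers--Miller's theorem asserts that if the graded pieces of an $\N$-graded $\mathbb{E}_\infty$-algebra satisfy homology stability for a stabilising element of degree $1$, then so do the graded pieces of its tensor with any connected finite CW complex. Applied to $\Kdef_{\mathds{1}}(G) \otimes T^r$, this will transfer homology stability of $\Hom_{\mathds{1}}(G,U(n))_{hU(n)}$ to that of $\Hom_{\mathds{1}}(G \times \Z^r, U(n))_{hU(n)}$.

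The remaining task is a bridge result between equivariant and non-equivariant stability: for any finitely generated group $H$, the sequence $\Hom_{\mathds{1}}(H, U(n))$ is homology stable if and only if $\Hom_{\mathds{1}}(H, U(n))_{hU(n)}$ is. To prove this I will apply Zeeman's comparison theorem to the map of Serre spectral sequences induced by the stabilisation in the Borel fibration
\[
\Hom_{\mathds{1}}(H, U(n)) \longrightarrow \Hom_{\mathds{1}}(H, U(n))_{hU(n)} \longrightarrow BU(n).
\]
The base is simply connected, so local coefficients are trivial. Moreover the fibre of $BU(n) \to BU(n+1)$ is $U(n+1)/U(n) = S^{2n+1}$, so this map is $(2n+1)$-connected; in particular, for any fixed degree $k$, it is a homology isomorphism in degrees $\leq k$ once $n$ is sufficiently large. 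In any fixed range of degrees and for $n$ sufficiently large, Zeeman's comparison theorem then converts stability of fibres into stability of total spaces and conversely.

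Putting it together: the hypothesis plus the bridge claim for $H = G$ give homology stability of the graded pieces of $\Kdef_{\mathds{1}}(G)$; Kupers--Miller applied to this $\mathbb{E}_\infty$-algebra with $M = T^r$, combined with the equivalence $\Kdef_{\mathds{1}}(G) \otimes T^r \simeq \Kdef_{\mathds{1}}(G \times \Z^r)$, yields homology stability of the graded pieces of $\Kdef_{\mathds{1}}(G \times \Z^r)$; and the bridge claim for $H = G \times \Z^r$ returns the non-equivariant stability in the conclusion of the theorem. I expect the main technical point to be the careful bookkeeping in Zeeman's comparison, ensuring that the degree shifts are harmless in both directions. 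The saving grace is that the definition of homology stability in force here requires only that the stabilisation maps eventually become isomorphisms in each fixed degree, with no quantitative range; this flexibility, combined with the fact that the connectivity of $BU(n) \to BU(n+1)$ grows with $n$, should make the Zeeman argument work without fuss.
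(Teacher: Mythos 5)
Your proposal follows essentially the same route as the paper: the bridge between equivariant and non-equivariant stability via Zeeman's comparison theorem on the Borel fibration is exactly the paper's Lemma~\ref{lem:homologystability}, and the application of Variant~\ref{var:based} together with the Kupers--Miller local-to-global principle to the torus $T^r$ is exactly the paper's proof of Theorem~\ref{thm:main1}. One small caveat: Kupers--Miller is stated for factorisation homology over (oriented, smooth) manifolds rather than arbitrary connected finite CW complexes as you write, but since $T^r$ is such a manifold your application is correct.
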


Theorem \ref{thm:main1} holds for arbitrary constant coefficients, for example $\Z$. For the analogous theorem concerning the representation varieties $\Rep(G,U(n))$ see Theorem \ref{thm:main2} below.

The following is useful terminology borrowed from \cite{KM18}.

\begin{definition} \label{def:chargedeinfinity}
A \emph{charged $\mathbb{E}_\infty$-space} is an $\mathbb{E}_\infty$-algebra in $\Spc^\N$. We say that a charged $\mathbb{E}_\infty$-space $A$ is \emph{connected} if its underlying space has $\pi_0(A)\cong \N$.
\end{definition}

Suppose that $A$ is a connected charged $\mathbb{E}_\infty$-space. Then the underlying space is $A\cong \bigsqcup_{n\geq 0} A_n$ with each $A_n$ connected. Pick any element $a_1\in A_1$; then multiplication by $a_1$ using the $\mathbb{E}_\infty$-structure gives a sequence of maps
\begin{equation} \label{eq:homologystability}
A_0\xrightarrow{\cdot a_1} A_1\xrightarrow{\cdot a_1} A_2\xrightarrow{\cdot a_1} \cdots
\end{equation}
Since $A_1$ is connected, the maps are independent, up to homotopy, of the specific choice of $a_1$.

\begin{definition}
The charged $\mathbb{E}_\infty$-space $A$ is said to satisfy \emph{homology stability} if the sequence (\ref{eq:homologystability}) satisfies homology stability.
\end{definition}

Let $A$ be a connected charged $\mathbb{E}_\infty$-space and let $X\in \Spc$ be a connected space. Then the charged $\mathbb{E}_\infty$-space $A\otimes X$ is again connected; one way to see this informally is to note first that the underlying space of $A\otimes X$ is $\bigsqcup_{n\in \N} A_{X_+}(n)$, where $A$ is viewed as an $\N^\otimes$-space. By the coend description of $A_{X_+}(n)$ in Section \ref{sec:graded}, a point in $A_{X_+}(n)$ is the equivalence class of a configuration of $k$ points in $X$ and a label in $A(y_1,\dots,y_k)$ for some $(y_1,\dots,y_k)\in \N^k$ with $\sum_{i} y_i=n$. Since $X$ is connected, the $k$ points can move continuously into any chosen point of $X$, showing that $\pi_0(A_{X_+}(n))\cong \pi_0(A(n))$. Since $A$ is connected, so is $A(n)$ for every $n$, and thus $\pi_0(A\otimes X)\cong \N$.

The local-to-global principle for homology stability of Kupers and Miller \cite{KM18} gives a relationship between homology stability of $A$ and homology stability of $A\otimes X$. We state the following corollary of their result.

\begin{theorem}[\cite{KM18}] \label{thm:localtoglobal}
If a connected charged $\mathbb{E}_\infty$-space $A$ satisfies homology stability, then so does $A\otimes M$ for any smooth connected orientable manifold $M$.
\end{theorem}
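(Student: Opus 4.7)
The plan is to recognize $A\otimes M$ as the factorization homology $\int_M A$ and then invoke the local-to-global principle of Kupers--Miller directly. For the first step, recall that for an $\mathbb{E}_\infty$-algebra $A$ in $\Spc$ the factorization homology $\int_M A$ is homotopy-invariant in the manifold argument and agrees with the tensoring $A\otimes M$ of $A$ with the underlying space of $M$ (both satisfy the same universal property, namely that of being the free $\mathbb{E}_\infty$-algebra-preserving colimit extension of the assignment $\ast\mapsto A$). Thus we may replace $A\otimes M$ throughout by $\int_M A$.

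For the second step, observe that the charge structure on $A\otimes M$ makes it a connected charged $\mathbb{E}_\infty$-space whenever $M$ is connected (as noted in the discussion preceding the theorem). Choose any lift $\tilde{a}_1 \in (A\otimes M)_1$ of the generator $a_1\in A_1$ along the unit map $A\to A\otimes M$ (induced by a point $\ast \in M$). Under the identification with factorization homology, multiplication by $\tilde{a}_1$ is precisely the geometric stabilization map that adds a small disk near a fixed basepoint of $M$, labeled by $a_1$. This is exactly the stabilization map appearing in the Kupers--Miller setup.

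Finally, Kupers--Miller's local-to-global principle \cite{KM18} asserts that if $A$ itself satisfies homology stability (with respect to multiplication by $a_1$), then so does $\int_M A$ with respect to the geometric disk-addition stabilization map, for $M$ a connected smooth orientable manifold. Combining this with the two identifications above yields the theorem. The smoothness and orientability hypotheses are imported directly from \cite{KM18}, where they ensure that factorization homology is well-defined and that the stabilization maps can be arranged to be compatible with the filtration by charge.

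The only non-trivial matter is bookkeeping: one needs to verify carefully that our $\N$-grading on $A\otimes M$ (inherited from the charge on $A$, via the graded tensoring developed in Section \ref{sec:graded}) matches the charge grading Kupers--Miller use on factorization homology, and that the stabilization maps in (\ref{eq:homologystability}) computed via the $\mathbb{E}_\infty$-multiplication coincide up to homotopy with the geometric ones they consider. Once this compatibility is in place, the theorem is a literal citation of their main result.
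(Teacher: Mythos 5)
Your proposal follows the same overall route as the paper: identify $A\otimes M$ with factorisation homology $\int_M A$ (the paper cites \cite[Proposition 5.1]{AF1} for this) and then invoke the local-to-global principle of Kupers--Miller. However, there is a genuine gap that the paper explicitly addresses and you do not.

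The main result of \cite{KM18} requires the manifold $M$ to be \emph{non-compact}. This is not a mere technicality: the stabilisation map on $\int_M A$ in the Kupers--Miller framework is constructed by ``adding a particle near infinity'', so it does not even exist for compact $M$ without some further argument. Your description of the stabilisation map as ``adding a small disk near a fixed basepoint of $M$'' elides this: for a framed $\mathbb{E}_n$-algebra on a compact manifold there is no canonical way to insert a new point. The statement as you are asked to prove it allows arbitrary connected smooth orientable $M$, and the crucial application in this paper is $M=(S^1)^r$, which is compact. The paper's fix is to observe that since $A$ is an $\mathbb{E}_\infty$-algebra (not merely framed $\mathbb{E}_n$), the factorisation homology $\int_M A\simeq A\otimes M$ is homotopy invariant in $M$, so one can replace $M$ by the non-compact manifold $M\times\mathbb{R}$ without changing anything, and then apply Kupers--Miller directly. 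Your proof should include this reduction; without it the citation of \cite{KM18} does not go through for compact $M$.
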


\begin{remark} To arrive at Theorem \ref{thm:localtoglobal} note the following:
\begin{enumerate}
\item[(1)] The results of $\cite{KM18}$ are formulated in terms of factorisation homology (or topological chiral homology) of framed $\mathbb{E}_n$-algebras over oriented $n$-manifolds $M$. But every $\mathbb{E}_\infty$-space $A$ may be viewed as a framed $\mathbb{E}_n$-algebra for any $n$ by neglect of structure. In this situation, by \cite[Proposition 5.1]{AF1} there is a canonical equivalence of spaces
\begin{equation*} \label{eq:fh}
\int_M A \simeq A \otimes M\,,
\end{equation*}
where on the left is factorisation homology and on the right is tensoring.
\item[(2)] The main result of \cite{KM18} requires the manifold $M$ to be non-compact, in order to be able to construct the stabilisation maps exhibiting homology stability of $\int_M A$. But if $A$ is an $\mathbb{E}_\infty$-algebra, then by the previous equivalence $\int_M A$ is homotopy invariant in $M$, and hence any given manifold can be replaced by a non-compact one simply by taking Cartesian product with $\R$.
\end{enumerate}
\end{remark}

We now apply Theorem \ref{thm:localtoglobal} to the charged $\mathbb{E}_\infty$-space $\Kdef_{\mathds{1}}(G)$. The underlying space of $\Kdef_{\mathds{1}}(G)$ is $\bigsqcup_{n\geq 0} \Hom_{\mathds{1}}(G,U(n))_{hU(n)}$. Because we have singled out the connected component of the trivial homomorphism, we see that $\pi_0(\Kdef_{\mathds{1}}(G))\cong \N$, i.e., $\Kdef_{\mathds{1}}(G)$ is a connected charged $\mathbb{E}_\infty$-space.

\begin{lemma} \label{lem:homologystability}
The charged $\mathbb{E}_\infty$-space $\Kdef_{\mathds{1}}(G)$ satisfies homology stability if and only if the sequence $\Hom_{\mathds{1}}(G,U(1)) \to \Hom_{\mathds{1}}(G,U(2)) \to \cdots$ satisfies homology stability.
\end{lemma}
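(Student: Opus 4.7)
The plan is to unpack the stabilization map on $\Kdef_{\mathds{1}}(G)$ into a two-step map of Borel constructions, and analyse each step via Serre spectral sequence arguments. Pick $a_1=[e,\mathds{1}]\in A_1=\Hom_{\mathds{1}}(G,U(1))_{hU(1)}$ for any $e\in EU(1)$ and $\mathds{1}$ the trivial one-dimensional representation. Unpacking Construction \ref{cons:kdef} shows that multiplication by $a_1$ on the underlying space of $\Kdef_{\mathds{1}}(G)$ is, up to homotopy, the map
\[
A_n:=\Hom_{\mathds{1}}(G,U(n))_{hU(n)} \longrightarrow \Hom_{\mathds{1}}(G,U(n+1))_{hU(n+1)}=:A_{n+1}
\]
induced by $f\mapsto f\oplus\mathds{1}$ together with the standard inclusion $U(n)\hookrightarrow U(n+1)$.

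I would then factor this map as
\[
A_n \xrightarrow{\alpha_n} \Hom_{\mathds{1}}(G,U(n+1))_{hU(n)} \xrightarrow{\beta_n} A_{n+1},
\]
where $\alpha_n$ is the Borel construction (with respect to the $U(n)$-action via $U(n)\subseteq U(n+1)$) of the stabilization map $f\mapsto f\oplus\mathds{1}$, and $\beta_n$ is the change-of-group map. The map $\beta_n$ is a fiber bundle with fiber $U(n+1)/U(n)\cong S^{2n+1}$, obtained by quotienting the free diagonal $U(n+1)$-action on $V(n+1,\C^\infty)\times \Hom_{\mathds{1}}(G,U(n+1))$ first by $U(n)$ and then by $U(n+1)$. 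Since $S^{2n+1}$ is $2n$-connected, $\beta_n$ induces isomorphisms on $H_k(-;C)$ for every coefficient group $C$ and every $k\leq 2n$.

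The map $\alpha_n$ fits into a morphism of Borel fibrations over the \emph{same} simply-connected base $BU(n)$, with fiberwise map equal to the stabilization $\Hom_{\mathds{1}}(G,U(n))\to\Hom_{\mathds{1}}(G,U(n+1))$. Since $H_\ast(BU(n);\Z)$ is free, the $E^2$-page of the Serre spectral sequence Kunneth-factorises, and Zeeman's comparison theorem applied with identity on the base yields that $\alpha_n$ is an isomorphism on $H_k(-;C)$ if and only if the fiberwise stabilization map is. Combining this with the previous paragraph, for every fixed $k$ and all $n\geq k/2$, the composite $\beta_n\circ\alpha_n$ is an $H_k(-;C)$-isomorphism if and only if $\Hom_{\mathds{1}}(G,U(n))\to\Hom_{\mathds{1}}(G,U(n+1))$ is. This immediately yields the equivalence asserted in the lemma, with no loss in the stability range.

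The main obstacle is the ``iso on base and total implies iso on fiber'' direction of the Serre spectral sequence comparison needed for $\alpha_n$; this is a standard inductive argument on homological degree, using the vanishing of outgoing differentials from the fiber column $E^r_{0,\ast}$ together with the five-lemma at each induction step.
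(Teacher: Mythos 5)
Your proof is correct and relies on the same essential tools as the paper's (Serre spectral sequence plus Zeeman's comparison theorem), but organizes the comparison differently. The paper compares the Borel fibrations $\Hom_{\mathds{1}}(G,U(n))\to A_n\to BU(n)$ for consecutive $n$ directly, feeding homology stability of $BU(n)\to BU(n+1)$ into Zeeman's comparison theorem; you instead factor the stabilization map through the intermediate Borel construction $\Hom_{\mathds{1}}(G,U(n+1))_{hU(n)}$, splitting the argument into a fiberwise comparison over the \emph{fixed} simply-connected base $BU(n)$ and a change-of-group map which is a fiber bundle with $2n$-connected fiber $S^{2n+1}$. This factorization is a genuine refinement: it makes the harder ``iso on base and total implies iso on fiber'' direction cleaner because the base map is the identity, and it isolates the connectivity coming from $U(n)\subseteq U(n+1)$ into an elementary bundle argument rather than running it through the comparison theorem. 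One small caveat: the degree-by-degree ``$\alpha_n$ is an isomorphism on $H_k$ if and only if the fiberwise map is'' is stated slightly too strongly. The spectral-sequence comparison shows that iso on $H_q$ of the fiber for all $q\leq k$ implies iso on $H_k$ of the total space, and the converse direction needs iso of the total space in a range (with a small loss) together with the induction you sketch, rather than a single-degree equivalence. Since homology stability only requires isomorphisms for $n$ sufficiently large, this costs nothing for the lemma itself, but the claim of ``no loss in the stability range'' should be softened.
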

\begin{proof}
By definition, $\Kdef_{\mathds{1}}(G)$ satisfies homology stability if so does the sequence
\[
\Hom_{\mathds{1}}(G,U(1))_{hU(1)} \xrightarrow{\oplus \mathds{1}} \Hom_{\mathds{1}}(G,U(2))_{hU(2)} \xrightarrow{\oplus \mathds{1}} \cdots
\]
where $\oplus \mathds{1}$ means direct sum with a trivial representation. These spaces fit into a sequence of homotopy fibre sequences
\[
\Hom_{\mathds{1}}(G,U(n))\to \Hom_{\mathds{1}}(G,U(n))_{hU(n)} \to BU(n)\,;
\]
because the sequence $BU(1)\to BU(2)\to \cdots$ satisfies homology stability, and $BU(n)$ is simply-connected for all $n$, the Serre spectral sequence and Zeeman's comparison theorem imply the result.
\end{proof}

\begin{proof}[Proof of Theorem \ref{thm:main1}]
By Lemma \ref{lem:homologystability}, it suffices to see that homology stability of $\Kdef_{\mathds{1}}(G)$ implies homology stability of $\Kdef_{\mathds{1}}(G\times \Z^r)$. By Variant \ref{var:based} of Proposition \ref{prop:key} there is an equivalence of charged $\mathbb{E}_\infty$-spaces
\[
\Kdef_{\mathds{1}}(G\times \Z^r) \simeq \Kdef_{\mathds{1}}(G) \otimes (S^1)^r\, .
\]
The claim follows now from Theorem \ref{thm:localtoglobal}.
\end{proof}

The proof of the following variant is almost identical except for replacing $\Kdef_{\mathds{1}}(G)$ by $\Rdef_{\mathds{1}}(G)$ throughout and omitting the argument with the Serre spectral sequence.

\begin{theorem} \label{thm:main2}
The sequence $\Rep_{\mathds{1}}(G,U(1)) \to \Rep_{\mathds{1}}(G,U(2)) \to \cdots$ satisfies homology stability for a finitely generated group $G$ if and only if it satisfies homology stability for $G\times \Z^r$ for every $r\geq 0$.
\end{theorem}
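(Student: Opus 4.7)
The plan is to mirror the proof of Theorem \ref{thm:main1} with $\Kdef_{\mathds{1}}$ replaced by $\Rdef_{\mathds{1}}$ throughout, noting where the argument is in fact simpler. First I would observe that $\Rdef_{\mathds{1}}(G)$ is a connected charged $\mathbb{E}_\infty$-space in the sense of Definition \ref{def:chargedeinfinity}: its underlying space is the disjoint union $\bigsqcup_{n\geq 0} \Rep_{\mathds{1}}(G,U(n))$ of path-connected summands, so $\pi_0\,\Rdef_{\mathds{1}}(G)\cong \N$.

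The crucial simplification over the $\Kdef$ case is that no homotopy quotient appears. The underlying space of $\Rdef_{\mathds{1}}(G)$ at charge $n$ is literally the representation variety $\Rep_{\mathds{1}}(G,U(n))$, and the stabilisation map in the charged $\mathbb{E}_\infty$-space, obtained by multiplying with a point in the path-connected space $\Rep_{\mathds{1}}(G,U(1))$, agrees up to homotopy with the geometric direct-sum map $\oplus \mathds{1}$ (independent of the chosen point). Hence the analogue of Lemma \ref{lem:homologystability} is tautological: homology stability of $\Rdef_{\mathds{1}}(G)$ as a charged $\mathbb{E}_\infty$-space is by definition equivalent to homology stability of the sequence $\Rep_{\mathds{1}}(G,U(n))$, and no Serre spectral sequence step is required.

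For the forward implication I would invoke Variant \ref{var:based}(2), which yields an equivalence of charged $\mathbb{E}_\infty$-spaces
\[
\Rdef_{\mathds{1}}(G\times \Z^r) \simeq \Rdef_{\mathds{1}}(G) \otimes \widehat{\mathrm{Free}(\Z^r)} \simeq \Rdef_{\mathds{1}}(G) \otimes (S^1)^r\,.
\]
Since $(S^1)^r$ is a smooth connected orientable manifold for every $r\geq 1$, Theorem \ref{thm:localtoglobal} of Kupers--Miller transfers homology stability from $\Rdef_{\mathds{1}}(G)$ to $\Rdef_{\mathds{1}}(G\times \Z^r)$, giving the desired stability for the sequence $\Rep_{\mathds{1}}(G\times \Z^r,U(n))$. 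The converse is immediate on specialising to $r=0$. There is no substantive obstacle, as the technical content has already been absorbed into Variant \ref{var:based} and into Theorem \ref{thm:localtoglobal}; the only point to verify is that the abstract $\mathbb{E}_\infty$-stabilisation agrees with the geometric $\oplus\mathds{1}$, which follows from connectedness of $\Rep_{\mathds{1}}(G,U(1))$.
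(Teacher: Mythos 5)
Your proposal is correct and follows exactly the route the paper indicates: replace $\Kdef_{\mathds{1}}$ by $\Rdef_{\mathds{1}}$ throughout the proof of Theorem \ref{thm:main1}, note that the analogue of Lemma \ref{lem:homologystability} becomes tautological because the underlying charged space is literally $\bigsqcup_n \Rep_{\mathds{1}}(G,U(n))$ (so no Borel construction and hence no Serre spectral sequence/Zeeman comparison step), and then apply Variant \ref{var:based}(2) together with Theorem \ref{thm:localtoglobal}. This is precisely the paper's proof.
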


\begin{remark} \label{rem:finitegroup} If $A$ is a non-trivial \emph{finite} abelian group, then $\Hom(G\times A,U(n))$ fails to satisfy homology stability already on $H_0$.  The isomorphism class of a representation $f\co A\to U(n)$ is determined by an unordered tuple $\chi=\{\chi_1,\dots,\chi_n\}$ of $n$ $1$-dimensional characters of $A$; there is a decomposition
\[
\Hom(A,U(n))\cong \bigsqcup_{\chi \in \widehat{A}^n/\Sigma_n} U(n)/U(n)_{\chi}
\]
where $U(n)_\chi$ is the stabiliser of the representation $\chi_1\oplus \cdots \oplus \chi_n$ under the conjugation action, see \cite[Proposition 2.5]{AG}. Consequently, 
\[
|\pi_0\Hom(G\times A,U(n))|\geq |\pi_0\Hom(A,U(n))| = \sum_{k=1}^{|A|} \binom{n-1}{k-1} \binom{|A|}{k}
\]
which diverges as $n$ tends to infinity. (The inequality holds, because one space is a retract of the other.) If we restrict to only the path-component of the trivial representation, then the decomposition above implies
\[
\Hom_{\mathds{1}}(G\times A,U(n)) \cong \Hom_{\mathds{1}}(G,U(n))\,,
\]
induced by restriction to $G$. So, for the question of homology stability direct products with finite abelian groups do not really play a role.
\end{remark}

\subsection{An example} \label{sec:examples}

Let $\Sigma$ be a closed orientable surface of genus $g>0$. In his thesis \cite[Section 4.5]{RThesis} Ramras uses gauge theoretic methods to show that the maps
\[
\Hom(\pi_1(\Sigma),U(n)) \to \Hom(\pi_1(\Sigma),U(n+1))
\]
are precisely $(2n-1)$-connected. He also showed the equivalence
\begin{equation} \label{eq:surfacestable}
\Hom(\pi_1(\Sigma),U)\simeq \Omega^\infty(ku\wedge B\pi_1(\Sigma))\, ,
\end{equation}
where $\Hom(\pi_1(\Sigma),U)=\colim_n\, \Hom(\pi_1(\Sigma),U(n))$, $ku$ is the connective complex $K$-theory spectrum and $\Omega^\infty$ takes the underlying space of a spectrum.

Combining this with Theorem \ref{thm:main1} we obtain:

\begin{corollary}
Let $\mathcal{C}$ be the smallest class of groups which contains the fundamental groups of closed orientable surfaces and which is closed under binary free products and direct product with $\Z$. Then, for every $G\in \mathcal{C}$ the sequence
\[
\Hom(G,U(1))\to \Hom(G,U(2)) \to \cdots
\]
satisfies homology stability with integer coefficients. Moreover, the stable homology is known in view of the equivalence
\[
\Hom(G,U)\simeq \Omega^\infty(ku\wedge BG)\, .
\]
\end{corollary}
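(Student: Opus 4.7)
The plan is to argue by induction on the generating procedure for the class $\mathcal{C}$, simultaneously tracking three properties of $G$: (H) homology stability of the sequence $\Hom(G,U(n))\to \Hom(G,U(n+1))$ with $\Z$-coefficients; (C) path-connectedness of $\Hom(G,U(n))$ for every $n$; and (S) the stable equivalence $\Hom(G,U)\simeq \Omega^\infty(ku\wedge BG)$. The base case $G=\pi_1(\Sigma)$ handles all three at once: (H) and (S) are precisely Ramras's theorems recalled in (\ref{eq:surfacestable}), while (C) is classical for surface groups. Carrying (C) along the induction is important, because it lets me identify $\Hom=\Hom_{\mathds{1}}$ throughout $\mathcal{C}$ and thereby apply Theorem \ref{thm:main1} and Variant \ref{var:based} directly to the \emph{full} representation varieties.

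For the step $G\leadsto G\times \Z$, (C) is preserved by Remark \ref{rem:pathconnected}, and with (C) in hand (H) is exactly the content of Theorem \ref{thm:main1} applied with $r=1$. For (S), Lawson's product formula recalled in Remark \ref{rem:productformula} gives $k^{\mathrm{def}}(G\times \Z)\simeq k^{\mathrm{def}}(G)\wedge S^1_+$; combining this with the inductive hypothesis $k^{\mathrm{def}}(G)\simeq ku\wedge BG_+$ yields $k^{\mathrm{def}}(G\times \Z)\simeq ku\wedge BG_+\wedge S^1_+\simeq ku\wedge B(G\times \Z)_+$, whence (S) on passage to $\Omega^\infty$.

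For the step $(G_1,G_2)\leadsto G_1*G_2$, the starting point is the natural homeomorphism $\Hom(G_1*G_2,U(n))\cong \Hom(G_1,U(n))\times \Hom(G_2,U(n))$, under which the stabilisation map is the product of the two stabilisations. Property (C) is then automatic from products of connected spaces. Property (H) follows from the Künneth formula over $\Z$: for each fixed $k$ the tensor and Tor summands appearing in $H_k(\Hom(G_1,U(n))\times \Hom(G_2,U(n));\Z)$ only involve $H_i$ and $H_j$ for $i,j\le k$, each of which eventually stabilises on the two factors. For (S), taking the colimit over $n$ gives $\Hom(G_1*G_2,U)\simeq \Hom(G_1,U)\times \Hom(G_2,U)$; combined with the inductive hypothesis on each factor and the fact that finite wedges and products of spectra agree, this identifies with $\Omega^\infty(ku\wedge B(G_1*G_2)_+)$ via the equivalence $B(G_1*G_2)\simeq BG_1\vee BG_2$.

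The main obstacle I anticipate is the bookkeeping of reduced versus unreduced smash products in (S) through the $\times \Z$ step: one has to reconcile the stable splitting $\Sigma^\infty(BG\times S^1)_+\simeq \Sigma^\infty BG_+\vee \Sigma \Sigma^\infty BG_+$ with the decomposition $k^{\mathrm{def}}(G)\wedge S^1_+\simeq k^{\mathrm{def}}(G)\vee \Sigma\, k^{\mathrm{def}}(G)$ predicted by Lawson's formula, and check that these are compatible with the inductive form of (S). The (H) half of the corollary, by contrast, is essentially mechanical once Theorem \ref{thm:main1} is in hand and (C) has been propagated along the induction.
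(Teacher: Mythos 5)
Your plan for the first assertion — tracking path-connectedness along the induction, deducing stability over $\mathcal{C}$ from the closure of stability under $\ast$ (K\"unneth) and under $\times\Z$ (Theorem~\ref{thm:main1}) — is exactly the paper's argument, and is fine.

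For the second assertion (the identification of the stable homology) there is a genuine gap. You correctly sense it yourself when you flag the "bookkeeping of reduced versus unreduced smash products," but it is more than bookkeeping. Your inductive hypothesis (S) is a statement about the space $\Hom(G,U)=\colim_n\Hom(G,U(n))$, whereas Lawson's product formula lives at the level of the connective deformation $K$-theory \emph{spectrum} $k^{\mathrm{def}}(G)=\Omega B\,\Kdef(G)$, equivalently at the level of the group completion $\Omega B\,\Kdef(G)$ of the $\mathbb{E}_\infty$-space $\Kdef(G)$. In the $\times\Z$ step you silently replace (S) by the stronger statement $k^{\mathrm{def}}(G)\simeq ku\wedge BG_+$ and then say "whence (S) on passage to $\Omega^\infty$." But $\Omega^\infty(ku\wedge B(G\times\Z)_+)$ is not $\Hom(G\times\Z,U)$; it has an extra factor $\Omega^\infty ku\simeq\Z\times BU$. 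The missing ingredient is the fact (used in the paper, namely \cite[Theorem~2.3]{RYangMills}) that when $\Hom(G,U(n))$ is path-connected for all $n$, $\Kdef(G)$ is stably group-like, and consequently $\Hom(G,U)$ is the homotopy fibre of $\Omega B\,\Kdef(G)\to\Omega B\,\Kdef(1)$. This is what lets you pass from the $k^{\mathrm{def}}$-level equivalence (where the induction with Lawson's formula works cleanly) to the desired statement $\Hom(G,U)\simeq\Omega^\infty(ku\wedge BG)$. Without invoking this homotopy-fibre identification, the induction on (S) as you have stated it does not close, and the spurious ``$+$'' in your free-product step ($\Omega^\infty(ku\wedge B(G_1\ast G_2)_+)$ should be $\Omega^\infty(ku\wedge B(G_1\ast G_2))$) is a symptom of the same conflation.

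Once this bridge is inserted the proof goes through, and your approach is if anything slightly cleaner than the paper's: running the induction at the level of $k^{\mathrm{def}}$ over both closure operations and then applying the homotopy-fibre identification once at the end handles every element of $\mathcal{C}$ uniformly, whereas the paper's phrasing ("it suffices to prove the second statement when $G=\pi_1(\Sigma)\times\Z^k$") presupposes a normal form for elements of $\mathcal{C}$ as free products of such groups, which is not literally accurate (e.g.\ $(\pi_1(\Sigma_1)\ast\pi_1(\Sigma_2))\times\Z$). So: make the inductive hypothesis the spectrum-level equivalence $k^{\mathrm{def}}(G)\simeq ku\wedge BG_+$, prove it by your induction, and then deduce (S) by quoting the stably-group-like/homotopy-fibre theorem, using the tracked connectedness.
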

\begin{proof}
In view of the introductory remarks, it suffices to see that the property of homology stability is preserved under binary free products and direct product with $\Z$. The latter follows from Theorem \ref{thm:main1}, while the former follows from the K{\"u}nneth exact sequence and the evident homeomorphism
\begin{equation} \label{eq:freeproduct}
\Hom(\Gamma_1\ast \Gamma_2,U(n)) \cong \Hom(\Gamma_1,U(n)) \times \Hom(\Gamma_2,U(n))
\end{equation}
for any two discrete groups $\Gamma_1, \Gamma_2$. Also note that $\Hom(\pi_1(\Sigma),U(n))$ is path-connected by \cite[Corollary 4.11]{RYangMills}, and hence so is $\Hom(G,U(n))$ (see Remark \ref{rem:pathconnected}); therefore, no subscript $\mathds{1}$ is needed.

Because of (\ref{eq:freeproduct}) and the equivalence
\[
\Omega^\infty(ku\wedge B(\Gamma_1\ast \Gamma_2)) \simeq \Omega^\infty(ku\wedge B\Gamma_1)\times  \Omega^\infty(ku\wedge B\Gamma_2)\, ,
\]
it suffices to prove the second statement when $G=\pi_1(\Sigma)\times \Z^k$. Since $\Hom(G,U(n))$ is path-connected for all $n$, $\Kdef(G)$ is stably group-like (in the sense of \cite[Section 2.3]{RYangMills}) with respect to the trivial representation; it follows then from \cite[Theorem 2.3]{RYangMills} that $\Hom(G,U)$ is the homotopy fibre over the basepoint of the canonical map $\Omega B \Kdef(G)\to \Omega B \Kdef(1)$. By Remark \ref{rem:productformula}, or Lawson's product formula \cite{Lawson}, and (\ref{eq:surfacestable})
\[
\Omega B \Kdef(G) \simeq \Omega^\infty(ku\wedge BG_+)\,,
\]
and the result follows.
\end{proof}

Because $BG$ for $G\in \mathcal{C}$ is stably a wedge of spheres, $\Hom(G,U)$ is a finite product of various higher connected covers of $BU$ and $U$. The homology of these connected covers was computed by Singer \cite{Singer} and Stong \cite{Stong}.

Ramras \cite{RThesis} also showed that if $\mathcal{N}$ is a closed aspherical non-orientable surface, then
\[
\Hom_{\mathds{1}}(\pi_1(\mathcal{N}),U(n)) \to \Hom_{\mathds{1}}(\pi_1(\mathcal{N}),U(n+1))
\]
is at least $(n-1)$-connected. Let $\mathcal{C}'$ be the class of groups obtained from $\pi_1(\mathcal{N})$ by taking binary free products and direct products with $\Z$. With the same proof as before one obtains homology stability for $\Hom_{\mathds{1}}(G,U(n))$ as $n\to \infty$ for any $G\in \mathcal{C}'$.

By \cite[Corollary 4.12]{RYangMills} the space $\Hom(\pi_1(\mathcal{N}),U(n))$ has two path components when $n\geq 1$. Using \cite[Theorem 6.1]{RModuli}, for $G\in \mathcal{C}'$ the space $\Hom_{\mathds{1}}(G,U)$ has an explicit, yet slightly more cumbersome description in terms of higher connected covers of $BU$, $U$ and $\Omega^\infty \textnormal{cofib}(ku\xrightarrow{2} ku)$.

\begin{remark}
The theorem of Kupers--Miller, Theorem \ref{thm:localtoglobal}, has a version with explicit stability ranges (see \cite[Theorem 74]{KM18}). However, the ranges we'd obtain this way are in general not very close to the optimal range. For example, the map $\Hom(\Z^2,U(n))\to \Hom(\Z^2,U(n+1))$ is an isomorphism in homology degrees $i\leq 2n-2$, but the range we obtain from \cite[Theorem 74]{KM18} (choosing $\rho(n)=n/2$ in their notation) applied to our setting with $G=1$ would be $i\leq n/2$.
\end{remark}


\section{Hochschild homology and a spectral sequence} \label{sec:ss}

Let $G,A$ be finitely generated groups, with $A$ abelian. In this section we derive from Proposition \ref{prop:key} a spectral sequence which gives a method of computing the homology of $\Rep(G\times A,U(n))$ from knowledge of the homology of $\Rep(G,U(k))$ for $k\leq n$ along with the multiplicative structure on homology induced by the maps
\[
\Rep(G,U(k_1))\times \Rep(G,U(k_2))\xrightarrow{\oplus} \Rep(G,U(k_1+k_2))\,.
\]
There is a similar spectral sequence for the equivariant homology of $\Hom(G\times A,U(n))$.

Our reference for the following notions from higher algebra is \cite{HA}.

\begin{notation}
Let $k$ be a commutative ring spectrum. (We will only be concerned with the case where $k$ is an ordinary discrete commutative ring.)
\begin{enumerate}
\item[(1)] We denote by $\Mod_k$ the symmetric monoidal $\infty$-category of $k$-module spectra. The symmetric monoidal product is smash product over $k$, denoted $\otimes_k$. We will refer to objects of $\Mod_k$ as $k$-modules.
\item[(2)]  We denote by $\CAlg_k=\CAlg(\Mod_k)$ the $\infty$-category of $\mathbb{E}_\infty$-$k$-algebras.
\item[(3)] Since $\CAlg_k$ is cocomplete, it is canonically tensored over the $\infty$-category of spaces $\Spc$ and we denote by $\otimes$ the tensoring. Thus, for $X\in \Spc$ and $A\in \CAlg_k$, the object $A\otimes X\in \CAlg_k$ is determined by the demanding that $A\otimes -$ preserves colimits and that $A\otimes \ast\simeq A$ where $\ast\in \Spc$ is the point.
\item[(4)] Let $\mathbb{S}$ be the sphere spectrum, $\otimes_{\mathbb{S}}$ the smash product over $\mathbb{S}$, and for $X\in \Spc$ let $\mathbb{S}[X]$ be the suspension spectrum of $X_+$. We then have a functor
\[
k[-]\co \Spc\to \Mod_k,\quad X\mapsto \mathbb{S}[X]\otimes_{\mathbb{S}} k\, .
\]
In case that $k$ is a discrete commutative ring, the homotopy groups of $k[X]$ are the singular homology groups of $X$ with coefficients in $k$.
\end{enumerate}
\end{notation}

\begin{remark}
If $k$ is a discrete commutative ring, then $\Mod_k$ is equivalent as a symmetric monoidal $\infty$-category to the derived $\infty$-category of $k$-modules $\mathcal{D}(k)$ equipped with the $k$-linear tensor product. We may think of $\mathcal{D}(k)$ as the underlying $\infty$-category of a model category of chain complexes of $k$-modules. Then we see that every ordinary differential graded commutative $k$-algebra $A$ (henceforth called just \emph{dgca}) gives rise to an object of $\CAlg_k$. In particular, the homotopy groups of $A$ (when $A$ is thought of as an object of $\CAlg_k$) coincide with the homology groups of $A$ (when $A$ is viewed as a dgca).
\end{remark}

\begin{definition} \label{def:hh}
Let $A\in \CAlg_k$ and $X\in \Spc$. The \emph{Hochschild homology of $A$ over $X$} is the commutative $k$-algebra $\HH_\ast^X(A):=\pi_\ast(A\otimes X)$.
\end{definition}

\begin{remark}  \label{rem:hochschild}
Let $k$ be discrete and $A$ a dgca over $k$, viewed as an object of $\CAlg_k$. Let $X$ be a pointed simplicial set. Then there is a hands-on dgca model for $A\otimes X$, often written as $\mathcal{L}(A,A)(X)$ (called the \emph{Loday construction}), see \cite[Section 5.1]{Pirashvili}. There is then an isomorphism of $k$-algebras $\HH^X_\ast(A)\cong H_\ast(\mathcal{L}(A,A)(X))$. In Definition \ref{def:hochschildcomplex} we will review this construction when $X=S^1$; we write $C(A)=\mathcal{L}(A,A)(S^1)$. If $A$ itself is discrete, then $C(A)$ is the usual Hochschild complex as defined in \cite[Section 1.1.3]{Loday}.
\end{remark}

\begin{proposition} \label{prop:ss}
Let $G$, $A$ be finitely generated discrete groups and $A$ abelian. Let $k$ be a discrete commutative ring such that $H_\ast(\Kdef(G);k)$ is flat as a $k$-module. Then there is a first quadrant spectral sequence
\[
E^2=\HH^{\widehat{A}}_\ast(H_\ast(\Kdef(G);k)) \Longrightarrow  H_\ast(\Kdef(G\times A);k)\, .
\]
The analogous statement holds with $\Kdef(G)$ replaced by $\Rdef(G)$.
\end{proposition}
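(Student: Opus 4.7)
My plan is to combine Proposition \ref{prop:key} with the standard skeletal spectral sequence for the geometric realization of a simplicial object in $\Mod_k$. The functor $k[-]\co \Spc\to \Mod_k$ is symmetric monoidal (for Cartesian product on $\Spc$ and $\otimes_k$ on $\Mod_k$) and preserves small colimits; the induced functor $\CMon(\Spc)\simeq \CAlg(\Spc)\to \CAlg_k$ therefore intertwines the tensorings over $\Spc$. Applied to the equivalence of Proposition \ref{prop:key}(1) it yields
\[
k[\Kdef(G\times A)]\simeq k[\Kdef(G)]\otimes \widehat{A}
\]
in $\CAlg_k$, so the sought abutment is just $\HH^{\widehat{A}}_\ast(k[\Kdef(G)])$ in the sense of Definition \ref{def:hh}.

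Next, I would pick any simplicial set $Y_\bullet$ with $|Y_\bullet|\simeq \widehat{A}$, for example a smooth triangulation of the compact Lie group $\widehat{A}$. Since the tensoring of $B\in \CAlg_k$ over $\Spc$ preserves colimits and sends the point to $B$, it takes a discrete set $S$ to the coproduct $B^{\otimes_k S}$; hence
\[
k[\Kdef(G)]\otimes \widehat{A}\simeq \bigl|\, [n]\mapsto k[\Kdef(G)]^{\otimes_k Y_n}\,\bigr|
\]
as the geometric realization of a simplicial $\mathbb{E}_\infty$-$k$-algebra. The skeletal filtration of this realization produces a strongly convergent, first-quadrant spectral sequence whose $E^2$-term is the homology, along the alternating sum of face maps, of the simplicial graded $k$-module $[n]\mapsto \pi_\ast(k[\Kdef(G)]^{\otimes_k Y_n})$.

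Invoking the flatness of $H_\ast(\Kdef(G);k)$, the iterated Künneth map gives a natural isomorphism $\pi_\ast(k[\Kdef(G)]^{\otimes_k Y_n})\cong H_\ast(\Kdef(G);k)^{\otimes_k Y_n}$ that is natural with respect to arbitrary set maps $Y_n\to Y_m$. The resulting simplicial graded commutative $k$-algebra is exactly the Loday construction of Pirashvili \cite{Pirashvili} applied to the graded algebra $H_\ast(\Kdef(G);k)$ and $Y_\bullet$, whose homology is by definition $\HH^{\widehat{A}}_\ast(H_\ast(\Kdef(G);k))$. This identifies the $E^2$-page as claimed; the $\Rdef$-case is handled identically using Proposition \ref{prop:key}(2).

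The main technical point is the naturality of the iterated Künneth isomorphism with respect to arbitrary maps of finite sets, so that the levelwise identifications assemble into an isomorphism of simplicial graded commutative $k$-algebras and the $E^2$-page is genuinely Pirashvili higher Hochschild homology of $H_\ast(\Kdef(G);k)$ rather than some unidentified homology of a simplicial $k$-module. Once this coherence is in place, the remainder of the argument uses only the symmetric monoidality of $k[-]$ and the standard skeletal spectral sequence of a simplicial object in $\Mod_k$.
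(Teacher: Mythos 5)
Your proof is correct and follows essentially the same route as the paper: both identify the tensoring $(-)\otimes\widehat{A}$ with the realization of a simplicial object built from a finite simplicial model of $\widehat{A}$, apply the skeletal spectral sequence, and use flatness together with the K\"unneth isomorphism to recognize the $E^2$-page as Pirashvili's Loday construction on $H_\ast(\Kdef(G);k)$. The only cosmetic difference is that you pass to $\Mod_k$ via $k[-]$ before taking the realization, whereas the paper realizes the simplicial \emph{space} $\Kdef(G)((\widehat{A}_\bullet)_+)$ first (using Lemma~\ref{lem:cofibrant} for Reedy cofibrancy) and then invokes May's homology spectral sequence for realizations.
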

\begin{proof}
Pick a finite simplicial model for $\widehat{A}$ and identify $\Kdef(G) \otimes \widehat{A}$ with the value of the $\Gamma$-space $\Kdef(G)$ on $(\widehat{A}_\bullet)_+$. This is a Reedy cofibrant simplicial space, because $\Kdef(G)$ is cofibrant (see Lemma \ref{lem:cofibrant}), and
\[
|\Kdef(G)((\widehat{A}_\bullet)_+)|\simeq \Kdef(G\times A)
\]
by Proposition \ref{prop:key}. There is a spectral sequence converging to the homology of the geometric realisation of a Reedy cofibrant simplicial space, see \cite[Theorem 11.14]{Mayiteratedloopspaces}. In our case it takes the form
\[
E^2=H_\ast(H_\ast(\Kdef(G)((\widehat{A}_\bullet)_+);k)) \Longrightarrow H_\ast(\Kdef(G\times A);k)\, ,
\]
where $H_\ast(\Kdef(G)((\widehat{A}_\bullet)_+);k)$ is the chain complex of the simplicial $k$-module
\[
[n]\mapsto H_\ast(\Kdef(G)((\widehat{A}_n)_+);k)\,.
\]
Since $H_\ast(\Kdef(G);k)$ is flat over $k$, and using the fact that $\Kdef(G)$ is special, the K{\"u}nneth theorem implies that 
\[
H_\ast(\Kdef(G)((\widehat{A}_n)_+);k)\cong \bigotimes_{\widehat{A}_n} H_\ast(\Kdef(G);k)
\]
for every $n\geq 0$, functorially in $\widehat{A}_n$. This identifies the chain complex
\[
H_\ast(\Kdef(G)((\widehat{A}_\bullet)_+);k) 
\]
with the Hochschild complex computing $\HH^{\widehat{A}}_\ast(H_\ast(\Kdef(G);k))$, as defined for example in \cite[Section 5.1]{Pirashvili} (see Remark \ref{rem:hochschild}).

The statement for $\Rdef(G)$ is proved in the same way.
\end{proof}

\begin{remark}
One can show that the spectral sequences of Proposition \ref{prop:ss} are spectral sequences of algebras; since we will not use the multiplicative structure, we decided not to spell out the details. 
\end{remark}

Thanks to the graded structure on $\Kdef(G)$ and $\Rdef(G)$ we also obtain spectral sequences converging to the homology of $\Hom(G\times A,U(n))_{hU(n)}$ and $\Rep(G\times A,U(n))$ for each individual $n$. To this end observe that $H_\ast(\Kdef(G);k)$ is a \emph{bigraded} algebra, with an internal homological degree and a grading by the rank of a representation, that is, through the decomposition $\Kdef(G)\simeq \bigsqcup_{n\geq 0} \Hom(G,U(n))_{hU(n)}$. In Section \ref{sec:example} we will term this grading as the grading by \emph{charge} (as opposed to the homological grading). The grading by charge induces a grading on the Hochschild homology and we denote by
\[
\HH^{\widehat{A}}(H_\ast(\Kdef(G);k))_n
\]
the subspace of elements of charge $n$. This will be explained in more detail in Section \ref{sec:example}.

\begin{corollary} \label{cor:ss}
Under the hypotheses of Proposition \ref{prop:ss} there are spectral sequences for all $n\geq 0$
\[
E^2=\HH^{\widehat{A}}(H_\ast(\Kdef(G);k))_n \Longrightarrow H_\ast^{U(n)}(\Hom(G\times A,U(n));k)\,,
\]
and similarly for $\Rdef(G)$ and $\Rep(G\times A,U(n))$.
\end{corollary}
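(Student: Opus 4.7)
The plan is to refine the argument of Proposition \ref{prop:ss} so that the charge grading is preserved at every step. Recall from Section \ref{sec:graded} that $\Kdef(G)$ is naturally an $\N^\otimes$-space, and that tensoring an $\N^\otimes$-space with a based space of the form $X_+$ (see the discussion surrounding Corollary \ref{cor:tensor2}) produces an $\N$-graded space again. Moreover, the equivalence $\Kdef(G\times A)\simeq \Kdef(G)\otimes \widehat{A}$ of Proposition \ref{prop:key} was established as an equivalence of $\mathbb{E}_\infty$-algebras in $\Spc^\N$, so it splits into equivalences of spaces
\[
\Hom(G\times A,U(n))_{hU(n)} \simeq (\Kdef(G)\otimes \widehat{A})_n
\]
for every $n\geq 0$, where the subscript $n$ means the charge-$n$ summand.

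First, I would pick a finite based simplicial model for $\widehat{A}$ and form the simplicial space $[p]\mapsto \Kdef(G)((\widehat{A}_p)_+)$ as in the proof of Proposition \ref{prop:ss}. Because $\Kdef(G)$ is an $\N^\otimes$-space, each level decomposes as
\[
\Kdef(G)((\widehat{A}_p)_+) \cong \bigsqcup_{n\geq 0} \Kdef(G)((\widehat{A}_p)_+)_n,
\]
and the simplicial structure maps respect the charge. Passing to the charge-$n$ summand gives a sub-simplicial space, still Reedy cofibrant (since $\Kdef(G)$ is cofibrant and the decomposition is into components), whose geometric realisation is by Proposition \ref{prop:key} the space $\Hom(G\times A,U(n))_{hU(n)}$.

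Next, I would apply the Reedy spectral sequence of \cite[Theorem 11.14]{Mayiteratedloopspaces} to this charge-$n$ simplicial space, yielding
\[
E^2_{p,q} = H_p\bigl( [p']\mapsto H_q(\Kdef(G)((\widehat{A}_{p'})_+)_n;k) \bigr) \Longrightarrow H_{p+q}^{U(n)}(\Hom(G\times A,U(n));k).
\]
The remaining step is to identify the $E^2$-page with $\HH^{\widehat{A}}(H_\ast(\Kdef(G);k))_n$. Under the flatness hypothesis, specialness of $\Kdef(G)$ together with the K\"unneth theorem gives, at each simplicial level,
\[
H_\ast(\Kdef(G)((\widehat{A}_p)_+);k) \cong \bigotimes_{\widehat{A}_p} H_\ast(\Kdef(G);k),
\]
and this isomorphism is an isomorphism of bigraded objects where charge is additive under the tensor product. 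Intersecting with the charge-$n$ summand on both sides realises the Loday construction computing $\HH^{\widehat{A}}_\ast(H_\ast(\Kdef(G);k))$ in its internal charge-$n$ grading, which is exactly $\HH^{\widehat{A}}(H_\ast(\Kdef(G);k))_n$. The argument for $\Rdef(G)$ and $\Rep(G\times A,U(n))$ is verbatim the same, using the $\N^\otimes$-structure of $\Rdef(G)$.

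The only delicate point I anticipate is bookkeeping the charge grading through the K\"unneth isomorphism and the Loday construction; concretely, one has to verify that the charge grading on $H_\ast(\Kdef(G);k)$ behaves like a grading on a commutative algebra in the sense that the Loday functor $\mathcal{L}(-,-)(\widehat{A})$ preserves it levelwise. This is, however, a formal consequence of the fact that $\Kdef(G)$ is an $\mathbb{E}_\infty$-algebra in $\N$-graded spaces, so the Hochschild complex inherits the grading from the algebra and the decomposition commutes with taking homology.
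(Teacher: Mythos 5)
Your proof is correct and takes essentially the same route as the paper's: both rest on the observation that because $\Kdef(G)$ is an $\N^\otimes$-space, the simplicial space $\Kdef(G)((\widehat{A}_\bullet)_+)$ decomposes level-wise by charge, so the simplicial space, its geometric realisation, and hence the spectral sequence of Proposition \ref{prop:ss} all split as direct sums indexed by $n$. The paper's version is more succinct (it simply notes that the spectral sequence is a direct sum of spectral sequences), whereas you re-run the Reedy spectral sequence and the K\"unneth identification on each charge-$n$ summand separately; this extra detail is harmless and, if anything, makes explicit the bookkeeping of the charge grading through the Loday construction that the paper leaves implicit.
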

\begin{proof}
Because $\Kdef(G)$ comes from a $\N^\otimes$-space, there is an isomorphism of simplicial spaces
\[
\Kdef(G)((\widehat{A}_\bullet)_+) = \bigsqcup_{n\geq 0} \Kdef(G)((\widehat{A}_\bullet)_+)_n
\]
such that
\[
|\Kdef(G)((\widehat{A}_\bullet)_+)_n| \simeq \Hom(G\times A,U(n))_{hU(n)}\,.
\]
Therefore, the spectral sequence of Proposition \ref{prop:ss} is in fact a direct sum of spectral sequences, one for each $n$.
\end{proof}

\begin{remark} \label{rem:naturality}
We observe that the quotient maps $\Hom(G,U(n))_{hU(n)} \to \Rep(G,U(n))$ induce a map of $\Gamma$-spaces $\Kdef(G)\to \Rdef(G)$, and therefore a map of the corresponding spectral sequences in Proposition \ref{prop:ss} and Corollary \ref{cor:ss}. 
\end{remark}

An application of the spectral sequence will be given in Theorem \ref{thm:poincarerep}, where we determine the Poincar{\'e} polynomial of $\Rep(F_2\times \Z^r,U(2))$ for every $r\geq 0$.


\section{Free group representations and formality} \label{sec:formality}

Our next goal is to understand the homology $H_\ast(\Kdef(G);k)$ when $G$ is the direct product of a free group with an abelian group and when $k$ is a field of characteristic zero.

\subsection{Collapse of the spectral sequence} \label{sec:hh}

Let $G,A$ be finitely generated groups and $A$ abelian. Since $k[-]$ preserves colimits, Proposition \ref{prop:key} implies an equivalence of $\mathbb{E}_\infty$-$k$-algebras
\[
k[\Kdef(G\times A)] \simeq k[\Kdef(G)]\otimes \widehat{A}\, .
\]
If $k[\Kdef(G)]$ happens to be equivalent as an $\mathbb{E}_\infty$-$k$-algebra to
\[
\pi_\ast(k[\Kdef(G)]) \cong H_\ast(\Kdef(G);k)\,,
\]
then
\[
H_\ast(\Kdef(G\times A);k) \cong \HH^{\widehat{A}}(H_\ast(\Kdef(G);k))\, ;
\]
put differently, the spectral sequence of Proposition \ref{prop:ss} collapses in this case and there are no multiplicative extensions. We will show in Section \ref{sec:formality} that this is indeed the case when $G$ is a free group and $k$ is a characteristic zero field, which leads to the following theorem.

\begin{theorem} \label{thm:hh}
Let $k$ be a field of characteristic zero. Let $A$ be a finitely generated abelian group and let $s\geq 0$. There is an isomorphism of graded commutative $k$-algebras
\[
H_\ast(\Kdef(F_s\times A);k)\cong \HH^{\widehat{A}}_\ast(H_\ast(\Kdef(F_s);k))\, .
\]
\end{theorem}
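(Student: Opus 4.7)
The plan is to deduce this theorem directly from Proposition \ref{prop:formalityintro} (formality of $\Kdef(F_s)$) combined with Proposition \ref{prop:key} and standard properties of the functor $k[-]\co \Spc\to \Mod_k$. The whole argument is essentially a ``degeneration'' argument: once we know the $k$-chains on $\Kdef(F_s)$ are formal as an $\mathbb{E}_\infty$-$k$-algebra, the spectral sequence of Proposition \ref{prop:ss} computing $H_\ast(\Kdef(F_s\times A);k)$ collapses and has no multiplicative extensions, and this is exactly the content of the desired isomorphism.

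More concretely, I would proceed as follows. First, since $k[-]\co \Spc\to \Mod_k$ is symmetric monoidal and preserves colimits, it sends tensorings over $\Spc$ to tensorings over $\Spc$, and hence lifts to a colimit-preserving functor $\CAlg(\Spc)\to \CAlg_k$ that commutes with $\otimes X$ for every $X\in \Spc$. Applying this to the equivalence $\Kdef(F_s\times A)\simeq \Kdef(F_s)\otimes \widehat{A}$ of Proposition \ref{prop:key} yields an equivalence of $\mathbb{E}_\infty$-$k$-algebras
\[
k[\Kdef(F_s\times A)] \simeq k[\Kdef(F_s)]\otimes \widehat{A}\, .
\]
Now invoke Proposition \ref{prop:formalityintro}: as an object of $\CAlg_k$, $k[\Kdef(F_s)]$ is equivalent to its homotopy ring $H_\ast(\Kdef(F_s);k)$, regarded as a discrete graded commutative $k$-algebra (hence as an $\mathbb{E}_\infty$-$k$-algebra). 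Substituting this into the displayed equivalence gives
\[
k[\Kdef(F_s\times A)] \simeq H_\ast(\Kdef(F_s);k)\otimes \widehat{A}\, .
\]
Taking homotopy groups on both sides, the left hand side computes $H_\ast(\Kdef(F_s\times A);k)$ as a graded commutative $k$-algebra, while the right hand side is by Definition \ref{def:hh} the higher Hochschild homology $\HH^{\widehat{A}}_\ast(H_\ast(\Kdef(F_s);k))$. This is the claimed isomorphism.

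The only non-formal ingredient is Proposition \ref{prop:formalityintro}, and so the entire difficulty is deferred to Section \ref{sec:proofofformality}. It is worth noting that no $k$-flatness hypothesis is needed here, because in characteristic zero everything in sight is a vector space over $k$; this is why the argument goes through without the auxiliary assumption that appears in Proposition \ref{prop:ss}. I would add a short remark that, viewed through the spectral sequence of Proposition \ref{prop:ss}, the theorem says that the $E^2$-page coincides with the $E^\infty$-page and that the resulting associated graded reconstructs the homology ring on the nose, so the spectral sequence not only collapses but has trivial extensions.
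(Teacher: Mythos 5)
Your proposal is correct and follows essentially the same route as the paper: Section~\ref{sec:hh} also applies $k[-]$ to the equivalence of Proposition~\ref{prop:key}, substitutes the formality of $k[\Kdef(F_s)]$ established in Section~\ref{sec:proofofformality}, and reads off the Hochschild homology via Definition~\ref{def:hh}. Your remark that no flatness hypothesis is needed over a field, and the reinterpretation as collapse of the spectral sequence of Proposition~\ref{prop:ss} with trivial extensions, both mirror the paper's own framing.
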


Using the graded structure we obtain, just like in Section \ref{sec:ss}

\begin{corollary} \label{cor:hh}
Let $k$ be a field of characteristic zero and $A$ a finitely generated abelian group. Then, for every $n\geq 0$ there is an isomorphism
\[
H_\ast^{U(n)}(\Hom(F_s\times A,U(n));k)\cong \HH^{\widehat{A}}_{\ast}(H_\ast(\Kdef(F_s);k))_n\, ,
\]
where the subscript $n$ takes the subspace of elements of charge $n$.
\end{corollary}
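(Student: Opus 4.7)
The plan is to extract the charge-$n$ component from the algebra isomorphism of Theorem \ref{thm:hh}, in direct analogy with the passage from Proposition \ref{prop:ss} to Corollary \ref{cor:ss}. The whole point will be that the isomorphism of Theorem \ref{thm:hh} is automatically bigraded (by homological degree and by charge), because it is produced entirely within the formalism of $\mathbb{E}_\infty$-algebras in $\N$-graded spaces developed in Section \ref{sec:graded}.

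First I would identify both sides in fixed charge. On the left, the $\N^\otimes$-space structure on $\Kdef(F_s\times A)$ from Section \ref{sec:defktheory} decomposes the underlying space as $\bigsqcup_{m\geq 0}\Hom(F_s\times A,U(m))_{hU(m)}$, so that
\[
H_\ast(\Kdef(F_s\times A);k)_n\cong H_\ast^{U(n)}(\Hom(F_s\times A,U(n));k)\,.
\]
On the right, the commutative $k$-algebra $H_\ast(\Kdef(F_s);k)=\bigoplus_m H_\ast^{U(m)}(\Hom(F_s,U(m));k)$ carries the charge grading, and this grading passes to a simplicial-level grading on the Loday construction modelling $\HH^{\widehat{A}}_\ast(H_\ast(\Kdef(F_s);k))$: a simple tensor $a_1\otimes\cdots\otimes a_r$ indexed by $r$ points of a simplicial model of $\widehat{A}$ has charge $\sum_i|a_i|_{\mathrm{ch}}$, and the face and degeneracy maps (which involve multiplication in the algebra and insertion of the unit) preserve total charge. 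Hence $\HH^{\widehat{A}}_\ast(H_\ast(\Kdef(F_s);k))$ inherits a charge decomposition, and its charge-$n$ summand is by definition $\HH^{\widehat{A}}_\ast(H_\ast(\Kdef(F_s);k))_n$.

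Second I would argue that the isomorphism of Theorem \ref{thm:hh} is compatible with these two charge decompositions. The equivalence $\Kdef(F_s\times A)\simeq \Kdef(F_s)\otimes\widehat{A}$ of Proposition \ref{prop:key} is an equivalence in $\CAlg(\Spc^\N)$, so applying $k[-]$ yields an equivalence of $\mathbb{E}_\infty$-$k$-algebras that is graded by charge; the formality statement $k[\Kdef(F_s)]\simeq H_\ast(\Kdef(F_s);k)$ (to be established in Section \ref{sec:proofofformality}) holds at the level of charge-graded $\mathbb{E}_\infty$-$k$-algebras as well, and taking homotopy groups then gives the isomorphism of Theorem \ref{thm:hh} as one of bigraded algebras. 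Equivalently, as already observed for the spectral sequence of Corollary \ref{cor:ss}, the whole argument splits as a direct sum over charges.

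Taking charge-$n$ components on both sides of Theorem \ref{thm:hh} then gives the corollary. There is no real obstacle: once one recognises that the $\N^\otimes$-space formalism of Section \ref{sec:graded} keeps track of the charge through every step of Proposition \ref{prop:key}, the formality equivalence, and the identification with Hochschild homology, the corollary is just the charge-$n$ summand of Theorem \ref{thm:hh}.
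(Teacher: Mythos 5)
Your proposal is correct and follows the same route the paper indicates: the paper's proof is simply the one-line remark "Using the graded structure we obtain, just like in Section 5," and your argument spells out exactly why the $\N^\otimes$-space / charged $\mathbb{E}_\infty$ formalism of Section \ref{sec:graded} keeps the charge grading intact through Proposition \ref{prop:key}, the formality equivalence (which the paper proves in $\CAlg(\cCAlg_k^\N)$, so it is indeed charge-graded), and the identification of $\pi_\ast(A\otimes\widehat{A})$ with the Loday construction. Restricting Theorem \ref{thm:hh} to the charge-$n$ summand, and using the decomposition $\Kdef(F_s\times A)\simeq\bigsqcup_m\Hom(F_s\times A,U(m))_{hU(m)}$ to identify the left-hand side, gives the corollary exactly as you describe.
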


The case $n=2$ will be discussed in detail in Section \ref{sec:example}.

\subsection{Homology of $\Kdef(F_s)$} \label{sec:homologyring}

We describe the algebra $H_\ast(\Kdef(F_s);k)$ appearing in Theorem \ref{thm:hh}.

We begin by describing the space underlying $\Kdef(F_s)$. Let $\Vect_{\C}^{\simeq}$ denote the space $\bigsqcup_{n\geq 0} BU(n)$ with its usual $\mathbb{E}_\infty$-structure. It is naturally a charged $\mathbb{E}_\infty$-space, and there is a graded equivalence $\Vect_{\C}^\simeq \simeq \Kdef(1)$.

The mapping space $\Map(\vee^s S^1,\Vect_{\C}^\simeq)$ is given an induced $\mathbb{E}_\infty$-structure as follows.

\begin{construction} \label{cons:map} Let $M$ be an $\mathbb{E}_\infty$-algebra in $\Spc^\N$. 
\begin{itemize}
\item Define $ev_1 \co \Map(S^1,M) \to M$ by the pullback square in $\CAlg(\Spc^\N)$
\[
\begin{tikzcd}
\Map(S^1,M) \arrow[r] \arrow[d, "ev_1"] &M \arrow[d, "\Delta"] \\
M \arrow[r, "\Delta"] & M\times M
\end{tikzcd}
\]
where $\Delta$ is the diagonal map.
\item For $s\geq 2$ define $ev_1 \co \Map(\vee^s S^1,M) \to M$ by the pullback square in $\CAlg(\Spc^\N)$
\begin{equation*} \label{dgr:freeloopspace}
\begin{tikzcd}
\Map(\vee^s S^1,M) \arrow[r] \arrow[d] & \Map(\vee^{s-1} S^1,M) \arrow[d, "ev_1"]  \\
\Map(S^1,M) \arrow[r, "ev_1"] & M\, .
\end{tikzcd}
\end{equation*}
\end{itemize}
\end{construction}

View $\Kdef(F_s)$ as an $\mathbb{E}_\infty$-algebra over $\Kdef(1)\simeq \Vect^\simeq_{\C}$ via the unique homomorphism $1\to F_s$.

\begin{proposition}[{cf. \cite[Section 5]{LBott}}] \label{prop:identificationkdef}
For every $s\geq 0$, there is a canonical equivalence
\[
\Kdef(F_s) \simeq \Map(\vee^s S^1,\mathrm{Vect}_{\C}^\simeq)
\]
as charged $\mathbb{E}_\infty$-spaces over $\Vect_{\C}^\simeq$.
\end{proposition}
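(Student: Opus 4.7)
The plan is to proceed by induction on $s$. The case $s=0$ is immediate ($F_0$ trivial, $\vee^0 S^1 = \ast$), and for $s \geq 1$ I will match the pullback description of $\Map(\vee^s S^1,\Vect_\C^\simeq)$ from Construction \ref{cons:map} with a free-product pullback on the $\Kdef$ side. For the base case $s=1$, the underlying space $\bigsqcup_n U(n)_{hU(n)}$ of $\Kdef(\mathbb{Z})$ coincides with $\bigsqcup_n LBU(n) = \Map(S^1,\Vect_\C^\simeq)$ via the standard identification of the conjugation homotopy orbits of a compact Lie group with the free loop space of its classifying space. The $\mathbb{E}_\infty$-structure on $\Kdef(\mathbb{Z})$ is induced from direct sum of underlying vector spaces, which is exactly the pointwise structure that the pullback in Construction \ref{cons:map} places on $\Map(S^1,\Vect_\C^\simeq)$; and the structure maps to $\Vect_\C^\simeq$---restriction along $1 \to \mathbb{Z}$ on one side, $ev_1$ on the other---agree.

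For the inductive step ($s \geq 2$), the decomposition $F_s \cong F_{s-1} * \mathbb{Z}$ yields a $U(n)$-equivariant homeomorphism $\Hom(F_s,U(n)) \cong \Hom(F_{s-1},U(n)) \times \Hom(\mathbb{Z},U(n))$ with diagonal conjugation action. Applying the standard identity $[X \times Y]_{hU(n)} \simeq X_{hU(n)} \times^h_{BU(n)} Y_{hU(n)}$ and summing over $n$ produces a homotopy pullback square
\[
\begin{tikzcd}
\Kdef(F_s) \arrow[r] \arrow[d] & \Kdef(F_{s-1}) \arrow[d] \\
\Kdef(\mathbb{Z}) \arrow[r] & \Vect_\C^\simeq
\end{tikzcd}
\]
whose maps are induced by restriction along the evident group homomorphisms. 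This is formally identical to the pullback defining $\Map(\vee^s S^1,\Vect_\C^\simeq)$ in Construction \ref{cons:map}, so the inductive hypothesis together with the $s=1$ case yields the claimed equivalence.

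The main obstacle I anticipate is verifying that the square above is a pullback in $\CAlg(\Spc^\N)$ rather than merely in $\Spc$. Since limits in $\CAlg$ of a presentable $\infty$-category are computed in the underlying $\infty$-category, and limits in $\Spc^\N$ are computed componentwise, the question reduces to checking that all four corners are $\mathbb{E}_\infty$-algebras over $\Vect_\C^\simeq$ and that each map in the square is a morphism of such. To arrange this cleanly I would work at the level of the $\N^\otimes$-space model of Construction \ref{cons:kdef}: restriction along a group homomorphism is a manifest morphism of $\N^\otimes$-spaces (it preserves direct sum and rank), so the pullback of $\N^\otimes$-spaces is computed levelwise from the pullback of underlying spaces. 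An entirely different route would be to identify $\Rep_U(F_s)$ directly as the topological category of continuous functors from $F_s$ (viewed as a one-object groupoid) to finite-dimensional Hermitian vector spaces, whose nerve is $\Map(BF_s,\Vect_\C^\simeq) = \Map(\vee^s S^1,\Vect_\C^\simeq)$, thereby bypassing the induction at the cost of more $\infty$-categorical setup.
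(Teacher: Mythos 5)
Your proposal is correct and follows essentially the same route as the paper: an induction on $s$ with base case $\Kdef(\Z)\simeq\Map(S^1,\Vect_{\C}^{\simeq})$ via the identification of conjugation homotopy orbits with the free loop space, and inductive step obtained from the free-product decomposition $F_s\cong F_{s-1}\ast\Z$ producing a pullback square over $\Vect_{\C}^{\simeq}$, checked on underlying spaces using that the forgetful functor from $\CAlg(\Spc^{\N})$ reflects pullbacks. The paper packages the $s=1$ case itself as the pullback square of Construction~\ref{cons:map} rather than a direct space-level identification, which makes the compatibility of the $\mathbb{E}_\infty$-structures and the maps to $\Vect_{\C}^{\simeq}$ automatic rather than something to assert, but the content is the same.
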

\begin{proof}
The case $s=0$ is clear, so let us assume $s\geq 1$. It will be enough to show that $\Kdef(F_s)$ fits into the same kind of pullback squares as $\Map(\vee^s S^1,\Vect_{\C}^{\simeq})$ does. The homomorphism $i\co 1\to F_1=\Z$ induces a commutative diagram of charged $\mathbb{E}_\infty$-spaces
\begin{equation} \label{dgr:kdefz}
\begin{tikzcd}
\Kdef(\Z) \arrow[r, "i^\ast"] \arrow[d, "i^\ast"] & \Kdef(1) \arrow[d, "\Delta"] \\
\Kdef(1) \arrow[r, "\Delta"] & \Kdef(1) \times \Kdef(1)
\end{tikzcd}
\end{equation}
which we claim is a pullback diagram. Since the forgetful functor $\CAlg(\Spc^\N)\to \Spc$ reflects pullbacks, this can be checked on underlying spaces. On underlying spaces the map $i^\ast \co \Kdef(\Z)\to \Kdef(1)$ corresponds to the projection in the Borel construction
\[
i^\ast \co \bigsqcup_{n\geq 0} \Hom(\Z,U(n))_{hU(n)} \to \bigsqcup_{n\geq 0} BU(n)\, .
\]
The $U(n)$-equivariant equivalence (induced by functoriality of the bar-construction)
\[
\Hom(\Z,U(n))\xrightarrow{\sim} \Map_\ast(B\Z,BU(n))\simeq \Map_\ast(S^1,BU(n))\,,
\]
and the equivalence over $BU(n)$
\[
\Map_\ast(S^1,BU(n))_{hU(n)} \simeq \Map(S^1,BU(n))\,,
\]
induce an equivalence over $\bigsqcup_{n\geq 0} BU(n)$
\[
\begin{tikzcd}
{\displaystyle \bigsqcup_{n\geq 0} \Hom(\Z,U(n))_{hU(n)}} \arrow[rr, "\sim"] \arrow[dr, "i^\ast" swap] & & {\displaystyle \Map(S^1, \bigsqcup_{n\geq 0} BU(n))} \arrow[dl, "ev"] \\
& {\displaystyle \bigsqcup_{n\geq 0}BU(n)} &
\end{tikzcd}
\]
This equivalence shows that (\ref{dgr:kdefz}) is indeed a pullback square.

The homomorphisms $\Z\to F_s$ and $F_{s-1}\to F_s$ induced by the inclusion of the first and the last $s-1$ generators, respectively, induce a commutative diagram
\[
\begin{tikzcd}
\Kdef(F_s) \arrow[r] \arrow[d] & \Kdef(F_{s-1}) \arrow[d] \\
\Kdef(\Z)\arrow[r] &  \Kdef(1)
\end{tikzcd}
\]
which we claim is also a pullback diagram. Again this can be checked on underlying spaces where it follows from the homotopy pullback diagrams
\[
\begin{tikzcd}
\Hom(F_s,U(n)) \arrow[r] \arrow[d] & \Hom(F_{s-1},U(n)) \arrow[d] \\
\Hom(\Z,U(n)) \arrow[r] & \ast 
\end{tikzcd}
\]
after taking $U(n)$-homotopy orbits and disjoint union over all $n\geq 0$.
\end{proof}

Next we describe the cohomology of $\Map(\vee^s S^1,BU(n))$. Let $k$ be a commutative ring. For a space $X$ and an integer $g\in\{1,\dots,s\}$ we let
\[
ev|_g \co S^1\times \Map(\vee^s S^1,X) \to X
\]
denote evaluation along the $g$-th circle. Pick a fundamental class $e\in H_1(S^1;k)$. 

\begin{definition}
We define the  \emph{$g$-th free suspension} (cf. \cite{KK})
\begin{alignat*}{1}
\sigma_g\co H^\ast(X;k) & \to H^{\ast-1}(\Map(\vee^s S^1,X);k) \\
a & \mapsto (ev|_g)^\ast(a)/e
\end{alignat*}
where $-/e$ is the slant product.
\end{definition}

Consider the case $s=1$ and let $\sigma\co H^p(X;k)\to H^{p-1}(\Map(S^1,X);k)$ be the free suspension. Let $i\co \Omega X\to \Map(S^1,X)$ be the inclusion of the based into the free loop space. There is a commutative diagram
\[
\xymatrix{
H^p(X;k) \ar[r] \ar[dr]_-{ev^\ast} & H^p(S^1\times \Omega X;k) \ar[r]^-{-/e} & H^{p-1}(\Omega X;k) \\
& H^p(S^1\times \Map(S^1,X);k) \ar[u]_-{(id\times i)^\ast} \ar[r]^-{-/e} & H^{p-1}(\Map(S^1,X);k) \ar[u]_-{i^\ast} \, .
}
\]
Observe that if $p\geq 2$, then the composition of the maps in the top row can be identified, modulo sign, with the composition
\[
H^p(X;k)\to H^p(\Sigma \Omega X;k) \cong H^{p-1}(\Omega X;k)
\]
where the first map uses the counit of the loop-suspension-adjunction and the second map is the suspension isomorphism. Assume for simplicity that $X$ is simply-connected. Then this composition can also be identified, modulo sign, with the composition
\[
H^p(X;k) \cong E_2^{p,0} \to E_p^{p,0} \xrightarrow{d_p^{-1}} E_p^{0,p-1} \to E_2^{0,p-1} = H^{p-1}(\Omega X;k)\, ,
\]
where $(E_r,d_r)$ is the Serre spectral sequence of the path-loop fibration $\Omega X\to PX \to X$.

Consider the case $X=BU(n)$. Let us write $H^\ast(U(n);k)\cong \bigwedge_k(a_1,\dots,a_n)$ where $|a_j|=2j-1$. It is well-known that the generators may be chosen so that in the universal $U(n)$-bundle the class $a_j$ transgresses to the $j$-th Chern class. In other words, $d_{2j}^{-1}(c_j)=a_j$, and by changing the sign of $a_j$ if necessary, we get
\begin{equation} \label{eq:lerayhirsch}
i^\ast(\sigma(c_j))=a_j\, .
\end{equation}

Now let $s\geq 1$. For $g=1,\dots,s$ let
\[
\theta_g\co H^\ast(U(n);k)\to H^\ast(\Map(\vee^s S^1, BU(n));k)
\]
be the map defined by $\theta_g(a_j)=\sigma_g (c_j)$. Let
\[
ev_1\co \Map(\vee^s S^1,BU(n))\to BU(n)
\]
be evaluation at the basepoint of $\vee^s S^1$. Then $\theta_1,\dots,\theta_s$ and $ev_1$ induce a map of $H^\ast(BU(n);k)$-modules
\[
\theta\co H^\ast(U(n)^s;k)\otimes H^\ast(BU(n);k) \to H^\ast(\Map(\vee^s S^1,BU(n));k)
\]

\begin{lemma} \label{lem:LBUn}
The map $\theta$ is an isomorphism.
\end{lemma}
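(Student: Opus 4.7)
The plan is to apply the Leray--Hirsch theorem to the evaluation fibration
\[
U(n)^s \longrightarrow \Map(\vee^s S^1, BU(n)) \xrightarrow{ev_1} BU(n),
\]
whose fiber at the basepoint is the product $\Omega BU(n)^s \simeq U(n)^s$. Since $H^\ast(U(n);k)$ is the finite-dimensional exterior algebra on $a_1,\dots,a_n$, by the K{\"u}nneth formula the fiber has $k$-basis given by the exterior monomials in the classes $a_j^{(g)}$, $1\leq j\leq n$, $1\leq g\leq s$.

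The key step is to check that the classes $\sigma_g(c_j)$ restrict on the fiber to the generators $a_j^{(g)}$. For each $g$ let $i_g\co U(n) \hookrightarrow \Map(\vee^s S^1,BU(n))$ be the inclusion of the $g$-th factor of the fiber, i.e., sending $u$ to the map which is the loop representing $u$ on the $g$-th circle and constant on the other $s-1$ circles. Applying (\ref{eq:lerayhirsch}) to the $s=1$ case produces $i_g^\ast(\sigma_g(c_j)) = a_j$. For $h \neq g$, the map $ev|_h\circ(\mathrm{id}\times i_g)\co S^1\times U(n)\to BU(n)$ factors through the basepoint, so $(\mathrm{id}\times i_g)^\ast(ev|_h)^\ast(c_j)=0$ in positive degree, and therefore $i_g^\ast(\sigma_h(c_j)) = (\mathrm{id}\times i_g)^\ast(ev|_h)^\ast(c_j)/e = 0$. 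Combining these, the restriction to the fiber $U(n)^s$ sends $\sigma_g(c_j)$ to the class $a_j$ sitting in the $g$-th tensor factor of $H^\ast(U(n)^s;k)=\bigotimes_{g=1}^s H^\ast(U(n);k)$.

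It follows that the products of the $\sigma_g(c_j)$ restrict to the exterior monomials in the $a_j^{(g)}$, which are a $k$-basis of $H^\ast(U(n)^s;k)$. Hence the hypothesis of Leray--Hirsch is satisfied and the induced $H^\ast(BU(n);k)$-module map
\[
H^\ast(U(n)^s;k)\otimes H^\ast(BU(n);k) \longrightarrow H^\ast(\Map(\vee^s S^1,BU(n));k)
\]
is an isomorphism; by construction this is exactly $\theta$.

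The main obstacle is really only the fiber-restriction computation $i_g^\ast(\sigma_h(c_j)) = \delta_{gh}\, a_j$, which demands a careful identification of the fiber inclusions $i_g$ and of the basepoint behaviour of the evaluation maps $ev|_h$; once this is in place (possibly after adjusting signs in the choice of the $a_j$), Leray--Hirsch delivers the conclusion immediately.
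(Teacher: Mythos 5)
Your argument is correct and is essentially the paper's proof: both apply Leray--Hirsch to the evaluation fibration $U(n)^s \to \Map(\vee^s S^1,BU(n)) \to BU(n)$ and use \eqref{eq:lerayhirsch} to verify that the classes $\sigma_g(c_j)$ restrict on the fibre to the exterior generators. You simply spell out the fibre-restriction computation $i_g^\ast(\sigma_h(c_j)) = \delta_{gh}\,a_j$ (using that $ev|_h\circ(\mathrm{id}\times i_g)$ is null when $h\neq g$) which the paper leaves implicit.
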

\begin{proof}
This is an application of the Leray-Hirsch theorem to the fibration sequence
\[
U(n)^s \xrightarrow{i} \Map(\vee^s S^1,BU(n)) \xrightarrow{ev_1} BU(n)\, .
\]
One uses (\ref{eq:lerayhirsch}) to show that the map $H^\ast(U(n)^s;k) \to H^\ast(\Map(\vee^s S^1,BU(n));k)$ which is built using $\theta_1,\dots,\theta_s$, is a section of $i^\ast$, the restriction to the fibre.
\end{proof}

Let $\mu_{m,n}\co \Map(\vee^s S^1,BU(m))\times \Map(\vee^s S^1,BU(n))\to \Map(\vee^s S^1,BU(m+n))$ be the product.

\begin{lemma} \label{lem:comultiplication}
In cohomology, $\mu_{m,n}^\ast(\sigma_g c_k)=\sum_{\substack{i+j=k\\ i\leq m,\, j\leq n}} (\sigma_g c_i\otimes c_j+c_i\otimes \sigma_g c_j)$.
\end{lemma}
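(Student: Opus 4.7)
My plan is to unwind the definition $\sigma_g c_k = (ev|_g)^\ast(c_k)/e$ and apply the Whitney sum formula, keeping track of the Künneth decomposition on $H^\ast(S^1\times Y;k)$.

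The first step is to identify the composition $ev|_g\circ(\mathrm{id}_{S^1}\times \mu_{m,n})$. Writing $X_k:=\Map(\vee^s S^1, BU(k))$ and $i_g\co S^1\hookrightarrow \vee^s S^1$ for the inclusion of the $g$-th circle, one checks directly from the pointwise Whitney sum description of $\mu_{m,n}$ that this composition equals
\[
S^1\times X_m\times X_n \xrightarrow{\phi} BU(m)\times BU(n) \xrightarrow{\oplus} BU(m+n),
\]
where $\phi(t,f,h)=(f(i_g(t)),h(i_g(t)))$. Using the Whitney sum formula $\oplus^\ast c_k = \sum_{i+j=k} c_i\otimes c_j$, it suffices to compute $\phi^\ast(c_i\otimes c_j)/e$ for each $i+j=k$, since the slant product is natural in the sense that $(\mathrm{id}_{S^1}\times f)^\ast(-)/e = f^\ast(-/e)$ for any $f$.

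To compute $\phi^\ast(c_i\otimes c_j)$, I would factor $\phi$ through its components $\phi_m,\phi_n$. Each $\phi_m$ factors as the projection $S^1\times X_m\times X_n\to S^1\times X_m$ followed by $ev|_g\co S^1\times X_m\to BU(m)$, and likewise for $\phi_n$. By Künneth, with $\alpha\in H^1(S^1;k)$ dual to $e$,
\[
(ev|_g)^\ast(c_i) = 1\otimes ev_1^\ast c_i + \alpha\otimes \sigma_g c_i \in H^\ast(S^1\times X_m;k),
\]
which is essentially the definition of $\sigma_g$. Pulling back and multiplying the analogous decompositions in the ring $H^\ast(S^1\times X_m\times X_n;k)$, the cross-term involving $\alpha\cup\alpha$ vanishes, and the Koszul signs on the surviving terms are all trivial because Chern classes live in even degrees. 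This yields
\[
\phi^\ast(c_i\otimes c_j) = 1\otimes ev_1^\ast c_i\otimes ev_1^\ast c_j + \alpha\otimes ev_1^\ast c_i\otimes \sigma_g c_j + \alpha\otimes \sigma_g c_i\otimes ev_1^\ast c_j.
\]

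The final step is to apply the slant product with $e$, which annihilates the $1\otimes(-)$ term and extracts the coefficient of $\alpha$. Under the Leray--Hirsch identification of Lemma \ref{lem:LBUn}, $ev_1^\ast c_i$ is what we are calling $c_i$ in the target ring, so summing over $i+j=k$ gives
\[
\mu_{m,n}^\ast(\sigma_g c_k) = \sum_{i+j=k}\bigl(\sigma_g c_i\otimes c_j + c_i\otimes \sigma_g c_j\bigr),
\]
and the range condition $i\leq m$, $j\leq n$ is automatic since $c_i=0$ in $H^\ast(BU(m);k)$ for $i>m$. No real obstacle arises; the only thing to be careful about is the sign bookkeeping in the Künneth formula, and these signs are all trivial thanks to the even degree of Chern classes.
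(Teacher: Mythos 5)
Your proof is correct and takes essentially the same approach as the paper's. The paper's proof records the key map-level identity $ev|_g\circ(\mathrm{id}_{S^1}\times\mu_{m,n}) = \mu_{m,n}\circ(ev|_g\times ev|_g)\circ\tau\circ(\Delta_{S^1}\times\mathrm{id})$ and then says the claim follows ``by using naturality of the slant product''; your $\phi$ is exactly the composite $(ev|_g\times ev|_g)\circ\tau\circ(\Delta_{S^1}\times\mathrm{id})$, so you have established the same identity, and the remainder of your argument is simply a careful unwinding of the slant-product step (via the Künneth decomposition with respect to the $S^1$-factor and the Whitney sum formula) that the paper leaves to the reader.
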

\begin{proof}
This follows from a diagram chase. More precisely, we have that the two maps
\[
S^1\times \Map(\vee^s S^1,BU(m))\times \Map(\vee^s S^1,BU(n))\to BU(m+n)
\]
given by
\[
ev|_g\circ (id_{S^1}\times \mu_{m,n})
\]
and
\[
\mu_{m,n}\circ (ev|_g\times ev|_g)\circ \tau\circ (\Delta_{S^1}\times id)\,,
\]
are equal, where $\tau$ swaps the two factors in the middle and $\Delta_{S^1}$ is the diagonal on $S^1$. The claim follows from this identity by using naturality of the slant product.
\end{proof}

\begin{lemma} \label{lem:injectivecohomology}
The map $\mu_{n-1,1}$ is injective in cohomology.
\end{lemma}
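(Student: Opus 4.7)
The plan is to reduce the claim to a determinantal calculation on free modules over a polynomial ring. Write $R = k[c_1,\ldots,c_n] = H^\ast(BU(n);k)$ and $R' = k[c_1^{(n-1)},\ldots,c_{n-1}^{(n-1)},y] = H^\ast(BU(n-1)\times BU(1);k)$ with $y = c_1^{(1)}$, and let $\mu\co R \to R'$ be the Whitney-sum map $\mu(c_j) = c_j^{(n-1)} + c_{j-1}^{(n-1)} y$. Using Lemma \ref{lem:LBUn} I identify
\[
H^\ast(\Map(\vee^s S^1, BU(n));k) \cong R \otimes_k V
\]
and the analogous target cohomology as $R' \otimes_k W$, where $V$ and $W$ are exterior $k$-algebras on $ns$ generators each: the classes $\sigma_g c_j$ in the source, and the $\sigma_g c_j^{(n-1)}$ and $\sigma_g y$ in the target. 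I then factor $\mu_{n-1,1}^\ast$ as the base change $R \otimes V \hookrightarrow R' \otimes V$ along $\mu$, followed by an $R'$-linear map $\phi\co R' \otimes V \to R' \otimes W$. The base change is injective because $\mu$ itself is injective: composing it with the splitting-principle inclusion $R' \hookrightarrow k[x_1,\ldots,x_n]$ recovers the standard symmetric-polynomial embedding $c_j \mapsto e_j(x_1,\ldots,x_n)$.

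The heart of the argument is the injectivity of $\phi$. By Lemma \ref{lem:comultiplication}, generators $\sigma_g c_j$ with distinct $g$ are not mixed, so the matrix of $\phi$ on fiber generators decomposes as the direct sum of $s$ identical $n \times n$ blocks $M^{(n)}$ over $R'$. A direct transcription of Lemma \ref{lem:comultiplication} shows $M^{(n)}$ has bidiagonal form (ones on the diagonal and $y$'s on the superdiagonal) in rows $1,\ldots,n-1$, with last row $(1, c_1^{(n-1)}, c_2^{(n-1)},\ldots, c_{n-1}^{(n-1)})$. Expanding along the last column gives the recursion $\det M^{(n)} = c_{n-1}^{(n-1)} - y \det M^{(n-1)}$, whence
\[
\det M^{(n)} \;=\; \sum_{k=0}^{n-1} (-1)^k\, y^k\, c_{n-1-k}^{(n-1)}
\]
(with $c_0^{(n-1)}=1$). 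This polynomial is visibly nonzero in the domain $R'$, hence a non-zero-divisor.

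Because $\det M^{(n)}$ is a non-zero-divisor, $M^{(n)}$ is an injective map of free $R'$-modules of rank $n$, and every exterior power of $M^{(n)}$ is likewise injective (its determinant is a power of $\det M^{(n)}$). Passing to the block-diagonal matrix $M$ preserves this, and hence $\phi$ is injective on the entire exterior algebra $R'\otimes V$. Composing with the injective base change yields the desired injectivity of $\mu_{n-1,1}^\ast$.

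The main obstacle is notational bookkeeping: Lemma \ref{lem:comultiplication} produces a sum mixing base and fiber classes, so carefully unwinding these contributions and tracking signs to recognise the clean bidiagonal-plus-bottom-row shape of $M^{(n)}$ is the one step that requires some care. Once the matrix is in that form, the determinant recursion, its nonvanishing, and the exterior-power argument are all routine.
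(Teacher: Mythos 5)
Your proof is correct, and it takes a genuinely different route from the paper's. The paper argues directly: starting from a hypothetical nonzero $x$ with $\mu_{n-1,1}^\ast(x)=0$, it picks out a monomial in $x$ whose $(c_n,\sigma_g(c_n))$-content is lexicographically minimal, applies a well-chosen projection $\mathrm{id}\otimes\pi$ onto a one-dimensional summand of $H^\ast(\Map(\vee^s S^1,BU(1)))$, and reads off a nonzero term of the form $c_{n-1}^{l+k}q\otimes c_1^l\sigma_{j_1}(c_1)\cdots\sigma_{j_k}(c_1)$ -- a leading-term contradiction. Your argument instead uses Leray--Hirsch to identify both sides as free modules over $R=H^\ast(BU(n))$ and $R'=H^\ast(BU(n-1)\times BU(1))$, factors $\mu_{n-1,1}^\ast$ as base change along the (injective, by the splitting principle) Whitney-sum map $R\to R'$ followed by an $R'$-algebra map $\phi$, and then shows $\phi$ is injective because its degree-one matrix is block diagonal in $g$ with each block the bidiagonal-plus-row matrix $M^{(n)}$ whose determinant $\sum_{k=0}^{n-1}(-1)^k y^k c_{n-1-k}^{(n-1)}$ is monic in $c_{n-1}^{(n-1)}$, hence a non-zero-divisor over any $k$; the exterior-power argument then propagates injectivity to all of $R'\otimes V$. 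A minor point worth making explicit: this last step uses that $\phi$ is a graded $R'$-algebra map preserving exterior degree, so that $\phi$ decomposes as $\bigoplus_p\bigwedge^p$ of the degree-one matrix; this is true because $\phi$ is determined on the free graded-commutative $R'$-algebra $R'\otimes V$ by its values on the odd-degree generators $\sigma_g(c_j)$, which by Lemma~\ref{lem:comultiplication} are $R'$-linear in the odd generators of $W$. What each approach buys: the paper's proof is elementary and entirely self-contained; yours is structurally cleaner and arguably more illuminating, because it isolates the Whitney-sum matrix $M^{(n)}$ as the single object governing injectivity (and its determinant is the resultant-like polynomial $\prod (x_i - x_n)$ in splitting-principle variables). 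Both proofs work over an arbitrary commutative ring $k$, as needed for Corollary~\ref{cor:generation}.
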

\begin{proof}
Let $x\in H^\ast(\Map(\vee^s S^1,BU(n));k)$ and assume that $\mu_{n-1,1}^\ast(x)=0$. By Lemma \ref{lem:LBUn}, $x$ can be written as a linear combination of monomials of the form
\[
c_1^{i_1}\cdots c_n^{i_n} \sigma_1(c_1)^{\varepsilon_{11}}\cdots \sigma_1(c_n)^{\varepsilon_{1n}} \cdots \sigma_s(c_1)^{\varepsilon_{s1}}\cdots \sigma_s(c_n)^{\varepsilon_{sn}}
\]
with exponents $i_j\geq 0$ and $\varepsilon_{ij}\in \{0,1\}$. To each non-zero monomial we can assign a tuple of integers $(i_n,j_1,\dots,j_k)$ where the $1\leq j_1 < \cdots < j_k \leq s$ are precisely those indices for which $\varepsilon_{j_l n}=1$.  In other words, the monomial can be written as a product $c_n^{i_n} \sigma_{j_1}(c_n)\cdots \sigma_{j_k}(c_n) q$, where $q$ contains no terms that depend on $c_n$. Let $\mathcal{I}(x)$ be the collection of all the tuples that arise in this way from monomials appearing in $x$.

Suppose that $x$ is non-zero. Then $\mathcal{I}(x)$ is non-empty and there exists an element $(l,j_1,\dots,j_k)\in \mathcal{I}(x)$ for which $(k,l)$ is minimal in the lexicographic order on $\N^2$. We may write
\[
x=c_n^l  \sigma_{j_1}(c_n)\cdots \sigma_{j_k}(c_n) q+ x'
\]
where $q$ is non-zero and contains no terms that depend on $c_n$, and where $x'$ is a sum of monomials each of which contains either a power of $c_n$ greater than $l$, or a factor $\sigma_{r}(c_n)$ for $r\not\in \{j_1,\dots,j_k\}$. Recall that
\[
H^\ast(\Map(\vee^s S^1,BU(1));k)\cong k[c_1]\otimes {\textstyle \bigwedge_k}(\sigma_1(c_1),\dots,\sigma_s(c_1))
\]
and let $\pi$ be the projection onto the summand generated by $c_1^l \sigma_{j_1}(c_1)\cdots \sigma_{j_k}(c_1)$. Using Lemma \ref{lem:comultiplication} we get
\[
(id\otimes \pi)(\mu_{n-1,1}^\ast(x))=c_{n-1}^{l+k} q \otimes c_1^l \sigma_{j_1}(c_1)\cdots \sigma_{j_k}(c_1)\, .
\]
For this to be zero, we must have $q=0$, which is a contradiction.

Thus, $x=0$ and $\mu_{n-1,1}^\ast$ is injective.
\end{proof}

Since Lemma \ref{lem:injectivecohomology} holds for any commutative ring $k$, in particular for every field, and since the integral cohomology of $\Map(\vee^s S^1,\Vect_{\C}^{\simeq})$ is free, it follows that dually $\mu_{n-1,1}$ is surjective in homology. By induction, we obtain:

\begin{corollary} \label{cor:generation}
As a $k$-algebra, $H_\ast(\Map(\vee^s S^1,\Vect_{\C}^{\simeq});k)$ is generated by the homology of the disjoint summand $\Map(\vee^s S^1,BU(1))\subseteq \Map(\vee^s S^1,\Vect_{\C}^{\simeq})$.
\end{corollary}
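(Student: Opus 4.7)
The plan is to induct on the charge $n$, using Lemma \ref{lem:injectivecohomology} to feed the inductive step. The algebra $H_\ast(\Map(\vee^s S^1, \Vect_\C^\simeq); k)$ is graded by charge, and its charge-$n$ summand is $H_\ast(\Map(\vee^s S^1, BU(n)); k)$; multiplication by the charge-$1$ summand is implemented by $(\mu_{n-1,1})_\ast$. Thus, showing generation by charge $1$ reduces, via induction on $n$, to showing that each $(\mu_{n-1,1})_\ast$ is surjective on $H_\ast(-; k)$.

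To pass from the cohomological injectivity of Lemma \ref{lem:injectivecohomology} to the desired homological surjectivity, I would invoke $k$-linear duality. Lemma \ref{lem:LBUn} identifies $H^\ast(\Map(\vee^s S^1, BU(n)); k)$ as a finitely generated free module over $H^\ast(BU(n); k)$, so in each degree $H_\ast(\Map(\vee^s S^1, BU(n)); k)$ is a finite-dimensional $k$-vector space and coincides with the $k$-linear dual of the cohomology. Dualising then converts injectivity of $\mu_{n-1,1}^\ast$ into surjectivity of $(\mu_{n-1,1})_\ast$ for every $n \geq 1$.

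The induction itself is short: the base case $n = 0$ holds because $\Map(\vee^s S^1, BU(0))$ is a point, and assuming that $H_\ast(\Map(\vee^s S^1, BU(n-1)); k)$ lies in the subalgebra generated by $H_\ast(\Map(\vee^s S^1, BU(1)); k)$, the surjectivity of $(\mu_{n-1,1})_\ast$ expresses every class in charge $n$ as a sum of products of classes already known to lie in this subalgebra. There is no real obstacle here — the substance of the argument is Lemma \ref{lem:injectivecohomology}, and the duality and induction are routine.
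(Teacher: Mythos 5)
Your argument is the paper's own argument: reduce the claim, via induction on the charge, to surjectivity of $(\mu_{n-1,1})_\ast$ on homology, and obtain that surjectivity by dualising the cohomological injectivity of Lemma \ref{lem:injectivecohomology}, using Lemma \ref{lem:LBUn} to see that everything is of finite type. The one place where your write-up is slightly too loose is the duality step itself: you phrase it in terms of finite-dimensional $k$-vector spaces, which tacitly assumes $k$ is a field. The corollary is invoked in the proof of Theorem \ref{thm:quotientoffreealgebra}, which allows an arbitrary commutative ring $k$, and over a general ring the $k$-linear dual of an injection between finitely generated free modules need not be a surjection (multiplication by $2$ on $\Z$ is injective and self-dual, but not surjective). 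The paper closes this gap with its remark that the integral cohomology of $\Map(\vee^s S^1,\Vect_{\C}^{\simeq})$ is free abelian of finite type in each degree: your duality argument gives surjectivity of $(\mu_{n-1,1})_\ast$ over every field, the degreewise cokernel of the integral map is a finitely generated abelian group that vanishes after tensoring with $\Q$ and with every $\F_p$, hence vanishes, and then the universal coefficient theorem propagates surjectivity to all $k$. With that addendum your proof is complete and coincides with the paper's.
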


Using the $H$-space structure on $\Map(\vee^s S^1,BU(1))$ induced by that of $BU(1)$ we get an isomorphism of $k$-algebras
\begin{equation} \label{eq:homologydim1}
H_\ast(\Map(\vee^s S^1,BU(1));k)\cong \Gamma_k[x]\otimes {\textstyle \bigwedge_k}(b_1,\dots,b_s)
\end{equation}
where $|x|=2$ and $|b_i|=1$. For $I=\{i_1,\dots,i_l\}\subseteq \{1,\dots,s\}$ and an integer $n\geq 0$ define
\[
\xi_{I,n}=\gamma_n(x) b_{i_1}\cdots b_{i_l}\,.
\]
This is a class of degree $|\xi_{I,n}|=|I|+2n$.

\begin{notation}
For a graded $k$-module $V$ we denote by $\Lambda V=\bigoplus_{n\geq 0} (V^{\otimes n})_{\Sigma_n}$ the free graded-commutative $k$-algebra generated by $V$. For a graded set $\mathscr{S}$, we denote by $\Lambda \mathscr{S}$ the free graded-commutative $k$-algebra generated by the $k$-span of $\mathscr{S}$.
\end{notation}

\begin{theorem} \label{thm:quotientoffreealgebra}
Let $k$ be a commutative ring. There is an isomorphism of $k$-algebras
\[
H_\ast(\Kdef(F_s);k) \cong \Lambda(\xi_{I,n} \mid I\subseteq \{1,\dots,s\}, n\geq 0) /\mathscr{I}\, .
\]
Let $a_{ij}$ and $x_j$ be variables of degree $|x_j|=2j$ and $|a_{ij}|=2j-1$. Then the ideal $\mathscr{I}$ coincides with the kernel of the map
\begin{alignat*}{1}
\Lambda(\xi_{I,n} \mid I\subseteq \{1,\dots,s\},n\geq 0) & \to \Lambda(x_j,a_{ij} \mid j\geq 0,\, i\in \{1,\dots,s\}) \\
\xi_{\{i_1,\dots,i_l\},n} & \mapsto  \sum_{(\lambda_0,\dots,\lambda_l)\vdash n} x_{\lambda_0} a_{i_1,1+\lambda_1} \cdots a_{i_l,1+\lambda_l}
\end{alignat*}
where the sum is over all ordered partitions of $n$ into $l+1$ non-negative integers.
\end{theorem}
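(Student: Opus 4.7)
The plan is to realise $H_\ast(\Kdef(F_s);k)$ as the image of an injective embedding into $\Lambda(x_j,a_{ij})$ arising from a stabilisation map, and to verify that the formula in the theorem agrees with the composite of this embedding with $\Phi$.

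By Corollary \ref{cor:generation} and the description \eqref{eq:homologydim1}, the assignment $\xi_{I,n}\mapsto \gamma_n(x)b_{i_1}\cdots b_{i_l}$ yields a surjection of graded commutative $k$-algebras
\[
\Phi\co \Lambda(\xi_{I,n}\mid I\subseteq\{1,\ldots,s\},\, n\geq 0)\twoheadrightarrow H_\ast(\Kdef(F_s);k),
\]
so the statement reduces to proving $\ker\Phi=\ker f$, where $f$ is the algebra map defined by the formula.

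The embedding is constructed from the componentwise stabilisation map
\[
\sigma\co \textstyle\bigsqcup_{n\geq 0}\Map(\vee^s S^1,BU(n))\to \textstyle\bigsqcup_{n\geq 0}\Map(\vee^s S^1,BU)
\]
induced by the standard inclusions $BU(n)\hookrightarrow BU$. Taking $n\to\infty$ in Lemma \ref{lem:LBUn}, the cohomological restriction $H^\ast(\Map(\vee^s S^1,BU);k)\to H^\ast(\Map(\vee^s S^1,BU(n));k)$ is the surjection killing $c_j$ and $\sigma_i(c_j)$ for $j>n$; as these cohomologies are free $k$-modules, $\sigma_\ast$ is injective in each charge. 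Identifying the target algebra $H_\ast(\bigsqcup_n\Map(\vee^s S^1,BU);k)$ with $\Lambda(x_j,a_{ij}\mid j\geq 0,\, i\in\{1,\ldots,s\})$, where $x_0$ corresponds to the $H_0$-class indexing the charge, $x_j$ for $j\geq 1$ is the image $\iota_\ast(\gamma_j(x))$ of the divided power under $\iota\co BU(1)\to BU$, and $a_{i,j}$ is the corresponding stabilised free suspension, we obtain an injective algebra map $\bar f\co H_\ast(\Kdef(F_s);k)\hookrightarrow \Lambda(x_j,a_{ij})$.

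The crucial step is the verification that $\bar f\circ\Phi$ coincides with $f$, for which I would perform a direct dual computation. Since $\bar f\circ\Phi$ is multiplicative, it suffices to compute the image of each generator $\xi_{I,n}=\gamma_n(x)b_{i_1}\cdots b_{i_l}$. Pairing $\sigma_\ast(\xi_{I,n})$ against the monomial basis of $H^\ast(\bigsqcup_n\Map(\vee^s S^1,BU);k)$, one observes that only monomials whose $\iota^\ast$-image is supported on $t^n\sigma_{i_1}(t)\cdots\sigma_{i_l}(t)$ contribute, so only the factors $c_1$ and $\sigma_{i_k}(c_1)$ play a role. Applying repeatedly the Hopf algebra coproducts $\Delta(c_n)=\sum_{p+q=n}c_p\otimes c_q$ and $\Delta(\sigma_i c_n)=\sum_{p+q=n}(\sigma_i c_p\otimes c_q+c_p\otimes \sigma_i c_q)$ to reorganise the contributions, the pairing evaluates to the sum over ordered partitions $(\lambda_0,\ldots,\lambda_l)\vdash n$ of the monomials $x_{\lambda_0}a_{i_1,1+\lambda_1}\cdots a_{i_l,1+\lambda_l}$ prescribed by the formula. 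Injectivity of $\bar f$ then forces $\ker\Phi=\ker f=\mathscr{I}$.

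The main obstacle is the dual pairing computation: matching the precise combinatorial formula requires delicate tracking of sign conventions for the odd classes $\sigma_i(c_j)$ and careful normalisation of the Leray--Hirsch generators on both sides of the stabilisation.
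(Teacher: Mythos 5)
Your overall strategy matches the paper's in its first two steps: you use Corollary \ref{cor:generation} (via (\ref{eq:homologydim1})) to produce the surjection $\Phi$, and you use the injectivity of the stabilisation (equivalently the group completion) map $\Kdef(F_s)\to\Omega B\Kdef(F_s)\simeq\Z\times BU\times U^s$ to reduce the problem to computing the images of the generators $\xi_{I,n}=\gamma_n(x)b_{i_1}\cdots b_{i_l}$. The divergence, and the gap, is in how you determine those images.

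The paper's proof hinges on a structural input that your proposal never invokes: the map $\Map(\vee^s S^1,BU(1))\to\Omega B\Kdef(F_s)$ is an $H$-map for the \emph{second} (multiplicative) $\mathbb{E}_\infty$-structure, the one coming from tensor product of representations. The product $\gamma_n(x)b_{i_1}\cdots b_{i_l}$ in $H_\ast(\Map(\vee^s S^1,BU(1));k)$ is the Pontryagin product induced by the $H$-space structure on $BU(1)$ (tensor product of line bundles), which is the restriction of the $\circ$-product of the Hopf ring $H_\ast(\Omega B\Kdef(F_s);k)$. With that observation the formula is a short, formal Hopf ring computation: apply the distributive law to $x_n\circ(x_0 a_{i_1 1}\cdots a_{i_l 1})$, use the Cartan formula for the coproduct of $x_n$, and use the module relation $x_j\circ a_{i1}=a_{i,1+j}$. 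In particular the normalisation of the $a_{ij}$ is \emph{fixed by} the Hopf ring relation, not by duality with the Leray--Hirsch classes.

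Your substitute for this — a ``direct dual computation'' — is not carried out, and it is substantially harder than your write-up suggests. Establishing that $\langle\sigma_\ast(\xi_{I,n}),m\rangle=\delta_{m,\,c_1^n\sigma_{i_1}(c_1)\cdots\sigma_{i_l}(c_1)}$ is easy (this is just $\sigma^\ast(c_j)=0$ for $j\geq 2$), but that only gives the class in the basis \emph{dual to} the $c_j$- and $\sigma_i(c_j)$-monomials. To translate to the basis $\{x_{\lambda_0}a_{i_1,j_1}\cdots\}$ you must understand the pairing of the primitive classes $a_{ij}$ with the Leray--Hirsch generators, and these interactions are not diagonal: already $\langle a_{i2},\sigma_i(c_2)\rangle\neq 0$, and $\sigma_i(c_j)$ does not lie purely in the fibre factor under the splitting $\Map(\vee^s S^1,BU)\simeq BU\times U^s$. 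A naive identification of $a_{ij}$ with the dual of $\sigma_i(c_j)$ restricted to the fibre gives a different (sign-shifted) formula than the one in the statement. So the sentence ``the pairing evaluates to the sum over ordered partitions'' is the conclusion, not the argument; your final paragraph correctly flags ``delicate tracking of sign conventions'' and ``careful normalisation of the Leray--Hirsch generators'' as obstacles, but those are precisely the points where the proof is missing, and they are exactly what the paper's $\circ$-product argument is designed to avoid.
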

\begin{proof}

The first part follows directly from Corollary \ref{cor:generation}.

The ideal $\mathscr{I}$ can be understood by considering the group completion map
\[
\gamma\co \Kdef(F_s)\to \Omega B \Kdef(F_s)\, .
\]
Lemma \ref{lem:comultiplication} shows that for every $n\geq 0$ the canonical map
\[
\Map(\vee^s S^1,BU(n))\to \Map(\vee^s S^1,BU(n+1))
\]
is surjective in cohomology, and hence injective in homology. Since
\[
\Omega B \Kdef(F_s) \simeq \Z\times \mathrm{tel}_n \Map(\vee^s S^1,BU(n)) \simeq \Z\times BU \times U^s\,,
\]
it follows that $\gamma$ is injective in homology as well. Therefore, $\mathscr{I}$ can be identified with the kernel of the composite map
\[
\Lambda(\xi_{I,n} \mid I\subseteq \{1,\dots,s\},n\geq 0) \to  H_\ast(\Kdef(F_s);k) \xrightarrow{\gamma_\ast} H_\ast(\Z \times BU \times U^s;k)\, .
\]
Fix primitive generators $a_{i1},a_{i2},\dots$ for the homology of the $i$-th factor of $U$, and let $x_0,x_1,x_2,\dots$ be the usual additive generators for the homology of $\{1\}\times BU(1)\subseteq \Z\times BU$. In particular, $|a_{ij}|=2j-1$ and $|x_n|=2n$. Then
\[
H_\ast(\Z \times BU \times U^s;k)\cong \Lambda(x_j,a_{ij} \mid j\geq 0,\, i\in \{1,\dots,s\})[x_0^{-1}]\,.
\]
If the generators in (\ref{eq:homologydim1}) are chosen accordingly, then the composite map
\begin{equation} \label{eq:idealmap}
f\co \Map(\vee^s S^1,BU(1))\hookrightarrow \Kdef(F_s)\xrightarrow{\gamma} \Omega B \Kdef(F_s)
\end{equation}
sends $b_{i_1}\cdots b_{i_l}$ to $x_0 a_{i_1,1}\cdots a_{i_l,1}$ and $\gamma_n(x)$ to $x_n$, respectively. This determines the induced map on homology fully, because $\Omega B \Kdef(F_s)$ is an $\mathbb{E}_\infty$-ring space (the second $\mathbb{E}_\infty$-structure is induced by the tensor product of representations) and the map $f$ is an $H$-map for the multiplicative $\mathbb{E}_\infty$-structure. Therefore, for $I=\{i_1,\dots,i_l\}\subseteq \{1,\dots,s\}$ one gets
\[
f_\ast(\xi_{I,n})=x_n\circ (x_0 a_{i_1 1}\cdots a_{i_l 1}) = \sum_{(\lambda_0,\dots,\lambda_l)\vdash n} x_{\lambda_0} a_{i_1,1+\lambda_1} \cdots a_{i_l,1+\lambda_l}\, .
\]
The symbol $\circ$ is the Hopf ring product in the Hopf ring $H_\ast(\Kdef(F_s);k)$ (for the notion of \emph{Hopf ring} see \cite{Wilson}). The second equality results from the distributive law in Hopf rings, the Cartan formula $x_n\mapsto \sum_{i+j=n} x_i\otimes x_j$ for the coproduct in $H_\ast(\Z\times BU;k)$, and the structure of $H_\ast(U;k)$ as a module over the Hopf ring $H_\ast(\Z\times BU;k)$ which gives $x_j\circ a_{i1}=a_{i,1+j}$ for all $j$.
\end{proof}

\begin{example} \label{ex:notfree}
When $s=1$, the map of Theorem \ref{thm:quotientoffreealgebra} is injective and thus
\[
H_\ast(\Kdef(\Z);k) \cong k[\xi_{\emptyset,n}\mid n\geq 0]\otimes {\textstyle \bigwedge_k}(\xi_{\{1\},n}\mid n\geq 0)\, .
\]

When $s\geq 2$, one has the following relation in $H_\ast(\Kdef(F_s);k)$
\[
\xi_{\{1\},0} \xi_{\{2\},0} = \xi_{\emptyset,0} \xi_{\{1,2\},0}\,,
\]
since both sides are mapped to $x_0^2 a_{11} a_{21}$ under $f_\ast$ (\ref{eq:idealmap}). This shows that $H_\ast(\Kdef(F_s);k)$ is not a free graded commutative algebra as soon as $s\geq 2$.
\end{example}

Suppose that $k$ is a field of characteristic zero. Then we can view $H_\ast(\Kdef(F_s);k)$ also as a quotient of $H_\ast(\Kdef(\Z^s);k)$. To this end, let $\Sym(V)\in \CAlg_k$ denote the free $\mathbb{E}_\infty$-$k$-algebra generated by $V\in \Mod_k$. Then $k[\Vect_{\C}^\simeq]\simeq \Sym(k[BU(1)])$ (see Lemma \ref{lem:free}), and thus by Proposition \ref{prop:key}
\[
k[\Kdef(\Z^s)] \simeq \Sym(k[BU(1)]) \otimes (S^1)^s\simeq \Sym(k[BU(1)\times (S^1)^s])\, .
\]
By inspection this equivalence is induced by the inclusion of
\[
BU(1)\times (S^1)^s\simeq \Map(\vee^s S^1,BU(1))
\]
into $\Kdef(\Z^s)$, hence it induces an isomorphism
\[
H_\ast(\Kdef(\Z^s);k) \cong \Lambda(\xi_{I,n} \mid I\subseteq \{1,\dots,s\},n\geq 0)\, .
\]
This gives the following viewpoint regarding Theorem \ref{thm:quotientoffreealgebra}.

\begin{corollary} \label{cor:abelianisation}
Let $k$ be a field of characteristic zero. The map $\Kdef(\Z^s)\to \Kdef(F_s)$ induced by abelianisation is surjective in homology with coefficients in $k$.
\end{corollary}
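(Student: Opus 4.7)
The plan is to combine Corollary \ref{cor:generation} with a simple naturality observation about the $1$-dimensional stratum. First I would note that the abelianisation homomorphism $F_s \twoheadrightarrow \Z^s$ induces, under $\Hom(-,U(1))$, the identity map on $U(1)^s$, because $U(1)$ is abelian and so every homomorphism $F_s \to U(1)$ factors uniquely through the abelianisation. Consequently, the map of $\mathbb{E}_\infty$-algebras $\Kdef(\Z^s)\to \Kdef(F_s)$ restricts on the $n=1$ summand to the identity map of $\Map(\vee^s S^1, BU(1))$ (identified as in Proposition \ref{prop:identificationkdef}).

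Next I would unpack the two algebra descriptions already available. The discussion preceding the corollary identifies
\[
H_\ast(\Kdef(\Z^s);k) \cong \Lambda(\xi_{I,n} \mid I\subseteq \{1,\dots,s\},\, n\geq 0)
\]
via the inclusion of the $1$-dimensional stratum $\Map(\vee^s S^1, BU(1))\hookrightarrow \Kdef(\Z^s)$, while Theorem \ref{thm:quotientoffreealgebra} describes
\[
H_\ast(\Kdef(F_s);k) \cong \Lambda(\xi_{I,n})/\mathscr{I}
\]
via the inclusion of the very same space into $\Kdef(F_s)$. Both sets of generators $\xi_{I,n}$ live in $H_\ast(\Map(\vee^s S^1, BU(1));k)$ and are defined in exactly the same way there, in terms of the classes $x$ and $b_i$ from \eqref{eq:homologydim1}.

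Combining the two previous steps, the $k$-algebra homomorphism
\[
H_\ast(\Kdef(\Z^s);k)\longrightarrow H_\ast(\Kdef(F_s);k)
\]
sends $\xi_{I,n}\mapsto \xi_{I,n}$, and is therefore none other than the quotient map $\Lambda(\xi_{I,n}) \twoheadrightarrow \Lambda(\xi_{I,n})/\mathscr{I}$, which is surjective. Equivalently, Corollary \ref{cor:generation} already tells us that $H_\ast(\Kdef(F_s);k)$ is generated as a $k$-algebra by the image of $H_\ast(\Map(\vee^s S^1, BU(1));k)$, and since these generators factor through $H_\ast(\Kdef(\Z^s);k)$ via the naturality observation, surjectivity is immediate. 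There is no real obstacle here: all the substantive work was done in proving Corollary \ref{cor:generation} and in identifying both algebras in terms of the classes $\xi_{I,n}$; this corollary is merely the repackaging of those two results.
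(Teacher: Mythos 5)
Your proof is correct and follows essentially the same route as the paper's: identify $H_\ast(\Kdef(\Z^s);k)$ with the free graded-commutative algebra $\Lambda(\xi_{I,n})$ via the $1$-dimensional stratum, observe that abelianisation is the identity on that stratum (since every homomorphism $F_s\to U(1)$ factors through $\Z^s$), and conclude by the generation statement of Corollary \ref{cor:generation} together with Theorem \ref{thm:quotientoffreealgebra}. The alternate phrasing you give at the end---generation by the $1$-dimensional stratum plus the factorisation through $\Kdef(\Z^s)$---is exactly the content the paper intends when it says the quotient description "gives the following viewpoint."
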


\begin{remark}
It should be noted that on non-equivariant homology the map induced by abelianisation $\Hom(\Z^s,U(n)) \to \Hom(F_s,U(n))\cong U(n)^s$ is neither surjective nor injective in general.
\end{remark}

\subsection{Formality of chains in $\Kdef(F_s)$} \label{sec:proofofformality}

Throughout this section let $k$ be a field of characteristic zero. We will show that the singular $k$-chains in $\Kdef(F_s)$ are formal as an $\mathbb{E}_\infty$-algebra.

\begin{notation}
We denote the $\mathbb{E}_\infty$-space $\Map(\vee^s S^1,\Vect_{\C}^{\simeq})$ by $M_s$.
\end{notation}

Because of the equivalence $\Kdef(F_s)\simeq M_s$ of Proposition \ref{prop:identificationkdef} our goal is to prove an equivalence of $\mathbb{E}_\infty$-$k$-algebras
\[
k[M_s] \simeq H_\ast(M_s;k)\, .
\]
The proof will show that this equivalence is owed to the facts that $k[\Vect_{\C}^\simeq]$ and $k[M_1]$ are free $\mathbb{E}_\infty$-$k$-algebras, and that the cohomology of $\Map(\vee^s S^1,BU(n))$ for each $n$ is a free $H^\ast(BU(n);k)$-module.

For our proof it will be necessary to take into account the $\mathbb{E}_\infty$-$k$-coalgebra structure of $k[M_s]$ as well. Our reference for coalgebra objects in a symmetric monoidal $\infty$-category are Sections 3.1 and 3.3 in \cite{Elliptic}.

\begin{definition} Let $\mathcal{C}$ be a symmetric monoidal $\infty$-category.  An \emph{$\mathbb{E}_\infty$-coalgebra in $\mathcal{C}$} is an $\mathbb{E}_\infty$-algebra in $\mathcal{C}^{\op}$. We let $\cCAlg(\mathcal{C}):=\CAlg(\mathcal{C}^\op)^\op$ denote the $\infty$-category of $\mathbb{E}_\infty$-coalgebras in $\mathcal{C}$.
\end{definition}

\begin{notation}
If $\mathcal{C}=\Mod_k$ with symmetric monoidal product $\otimes_k$, then we shall write $\cCAlg_k$ instead of $\cCAlg(\Mod_k)$. We refer to the objects of $\cCAlg_k$ as \emph{$\mathbb{E}_\infty$-$k$-coalgebras}.
\end{notation}

The $\infty$-category $\cCAlg_k$ inherits a symmetric monoidal structure from $\Mod_k$ such that the forgetful functor $\cCAlg_k \to \Mod_k$ is symmetric monoidal. Since the tensor product in $\Mod_k$ preserves small colimits separately in each variable, it follows from \cite[Corollary 3.1.4]{Elliptic} that the $\infty$-category $\cCAlg_k$ is presentable, and from \cite[3.2.2.5]{HA} (applied to $\Mod_k^\op$) that the forgetful functor $\cCAlg_k \to \Mod_k$ preserves and reflects small colimits. It follows that the tensor product on $\cCAlg_k$ also preserves small colimits in each variable.

We can equip the $\infty$-category of $\N$-graded $\mathbb{E}_\infty$-$k$-coalgebras $\cCAlg_k^\N$ with the Day convolution product (see Section \ref{sec:graded}). We can then take $\mathbb{E}_\infty$-algebras in $\cCAlg_k^\N$; the $\infty$-category $\CAlg(\cCAlg_k^\N)$ is presentable by \cite[3.2.3.5]{HA}.

\begin{definition}
A \emph{charged $\mathbb{E}_\infty$-$k$-bialgebra} is an object of the $\infty$-category $\CAlg(\cCAlg_k^\N)$.
\end{definition}

A charged $\mathbb{E}_\infty$-$k$-bialgebra is what we get naturally by taking the $k$-chains of a charged $\mathbb{E}_\infty$-space: If $\Spc$ is equipped with the Cartesian symmetric monoidal structure (which we assume throughout), the forgetful functor $\cCAlg(\mathcal{S})\to \mathcal{S}$ is an equivalence \cite[2.4.3.10]{HA}; in other words, a space admits the structure of an $\mathbb{E}_\infty$-coalgebra in an essentially unique way (this structure is given by the diagonal). Taking $k$-linear chains induces a symmetric monoidal functor $\Spc\simeq \cCAlg(\Spc)\to \cCAlg_k$ and hence a functor
\[
k[-]\co \CAlg(\Spc^\N) \to \CAlg(\cCAlg_k^\N)\, .
\]
Since $k$ is a field, the K{\"u}nneth isomorphism holds and homotopy $\pi_\ast \co \cCAlg_k\to \cCAlg_k$ is a symmetric monoidal functor. It follows that homology $H_\ast(-;k)\co \Spc\to \Mod_k$ induces a functor from $\CAlg(\Spc^\N)$ to $\CAlg(\cCAlg_k^\N)$ as well.

In particular, $k[M_s]$ and $H_\ast(M_s;k)$ can be viewed as objects of $\CAlg(\cCAlg_k^\N)$.

\begin{definition}
We say that a map $A\to B$ in $\CAlg(\cCAlg_k^\N)$ is \emph{formal} if it fits into a commutative diagram
\[
\xymatrix{
A\ar[r] \ar[d]^-{\sim} & B \ar[d]^-{\sim} \\
\pi_\ast(A) \ar[r] & \pi_\ast(B)\, .
}
\]
\end{definition}

The statement to be proved in this section is:
\begin{proposition} \label{prop:keyformality}
For every $s\geq 0$, the map $k[M_s]\to k[\Vect^\simeq_\C]$ induced by evaluation at the basepoint is formal in $\CAlg(\cCAlg_k^\N)$.
\end{proposition}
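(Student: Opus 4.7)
The strategy is to induct on $s$, combining four ingredients: (i) the recognition of $k[\Vect_{\C}^\simeq]$ as a free $\mathbb{E}_\infty$-$k$-algebra on the coalgebra $k[BU(1)]$; (ii) the intrinsic formality of $BU(1) = K(\Z,2)$ over a field of characteristic zero, which makes $k[BU(1)]$ formal in $\cCAlg_k$; (iii) the identification $M_1 \simeq \Vect_{\C}^\simeq \otimes S^1$ coming from Proposition \ref{prop:key} and Proposition \ref{prop:identificationkdef}; and (iv) an Eilenberg--Moore-type comparison applied charge-by-charge to the pullback square of Construction \ref{cons:map}.

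For the base case $s=0$ I would first check that the adjoint $\Sym_k(k[BU(1)]) \to k[\Vect_{\C}^\simeq]$ of the inclusion of the charge-one summand is an equivalence in $\CAlg(\cCAlg_k^\N)$; on homotopy groups this reduces to the partition-counting isomorphism $H_\ast(BU(n); k) \cong \Sym_k^n(H_\ast(BU(1); k))$. Since $BU(1)$ is formal in $\cCAlg_k$ over $k$, applying the functor $\Sym_k$ to the equivalence $k[BU(1)] \simeq H_\ast(BU(1); k)$ yields formality of $k[\Vect_{\C}^\simeq]$ as a charged bialgebra. For the case $s=1$, apply $k[-]$ to $M_1 \simeq \Vect_{\C}^\simeq \otimes S^1$ to obtain $k[M_1] \simeq k[\Vect_{\C}^\simeq] \otimes S^1$; since both $\Sym_k$ and the tensoring with spaces preserve colimits, this combines with the $s=0$ case to give $k[M_1] \simeq \Sym_k(k[BU(1)] \oplus \Sigma\, k[BU(1)])$, which is formal together with its evaluation map to $k[\Vect_{\C}^\simeq]$.

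For the inductive step $s \geq 2$, Construction \ref{cons:map} presents $M_s$ as a pullback $M_1 \times_{\Vect_{\C}^\simeq} M_{s-1}$ in $\CAlg(\Spc^\N)$. The key step is to show that the natural comparison
\[
k[M_1] \otimes^{\mathrm{L}}_{k[\Vect_{\C}^\simeq]} k[M_{s-1}] \longrightarrow k[M_s]
\]
is an equivalence in $\CAlg(\cCAlg_k^\N)$. The $\N$-grading restricts the picture charge-by-charge to the simply connected components $BU(n)$, where the classical Eilenberg--Moore comparison applies; the cohomological freeness of $H^\ast(\Map(\vee^r S^1, BU(n)); k)$ over $H^\ast(BU(n); k)$ from Lemma \ref{lem:LBUn} then forces collapse of the spectral sequence at $E_2$ on every component. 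Reassembling over $n$ produces the required equivalence. Formality of the map $k[M_s] \to k[\Vect_{\C}^\simeq]$ now follows by induction: the three corners on the left are formal, and in characteristic zero a derived tensor product of formal bialgebras over the free algebra base $k[\Vect_{\C}^\simeq] \simeq \Sym_k(H_\ast(BU(1); k))$ retains formality.

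The main obstacle is the Eilenberg--Moore comparison itself, since $\Vect_{\C}^\simeq$ is not simply connected and the naive passage from a pullback of spaces to a derived tensor product of chains fails in general. The charge decomposition, which trades the non-connected base for its simply connected pieces $BU(n)$, together with the sharp Leray--Hirsch freeness of Lemma \ref{lem:LBUn}, is what makes the argument work. A secondary subtlety is the preservation of formality under the derived tensor product; this is controlled by the fact that the base is a free $\mathbb{E}_\infty$-$k$-algebra, so that an explicit Koszul-type resolution leaves no room for Massey products to arise in characteristic zero.
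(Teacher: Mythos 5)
Your overall strategy matches the paper's: free $\mathbb{E}_\infty$-$k$-(bi)algebra recognition for the base cases, then an Eilenberg--Moore comparison applied charge-by-charge over the simply-connected pieces $BU(n)$ using the Leray--Hirsch freeness of Lemma \ref{lem:LBUn}. Your base cases $s=0$ and $s=1$ are correct (your $s=1$ argument via $M_1 \simeq \Vect_{\C}^\simeq \otimes S^1$ is a slightly slicker version of the paper's Lemma \ref{lem:formalccalg}).

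The inductive step, however, contains a genuine error. You write the comparison map as a \emph{relative derived tensor product} of $\mathbb{E}_\infty$-$k$-algebras,
\[
k[M_1] \otimes^{\mathrm{L}}_{k[\Vect_{\C}^\simeq]} k[M_{s-1}] \longrightarrow k[M_s],
\]
but this is the wrong construction and the arrow points the wrong way. The singular $k$-chains $k[-]$ carry an $\mathbb{E}_\infty$-$k$-\emph{coalgebra} structure, and a pullback of spaces induces, on chains, a map \emph{into} the \emph{cotensor product} (i.e.\ the pullback in $\cCAlg_k$), not a map from a relative tensor product of algebras:
\[
k[M_s] \longrightarrow k[M_1]\, \square_{k[\Vect_{\C}^\simeq]}\, k[M_{s-1}].
\]
The collapse you invoke (via Lemma \ref{lem:LBUn}) is the homological, Cotor-based Eilenberg--Moore spectral sequence, not the cohomological Tor one, and the whole construction must live in $\CAlg(\cCAlg_k^\N)$, where a pullback of $\mathbb{E}_\infty$-coalgebras is available. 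Using $\otimes^{\mathrm{L}}$ over $k[\Vect_{\C}^\simeq]$ would model chains on a \emph{pushout} of spaces under $\Vect_{\C}^\simeq$, which is not what Construction \ref{cons:map} provides.

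Relatedly, your last paragraph asserts that ``a derived tensor product of formal bialgebras over the free base retains formality'' because a Koszul resolution leaves ``no room for Massey products.'' This is not a proof, and in the correct coalgebra setting the cleaner route is available and is what is actually needed: one shows that \emph{both}
\[
k[M_s] \simeq k[M_1]\, \square^\N_{k[\Vect_{\C}^\simeq]}\, k[M_{s-1}]
\quad\text{and}\quad
H_*(M_s;k) \simeq H_*(M_1;k)\, \square^\N_{H_*(\Vect_{\C}^\simeq;k)}\, H_*(M_{s-1};k)
\]
hold (the second is exactly the EM collapse forced by Leray--Hirsch freeness). The inductive formality hypothesis then supplies equivalences between the three chain-level corners and their homology, compatibly over the base, and formality of $k[M_s]\to k[\Vect_{\C}^\simeq]$ follows by chaining these three equivalences, with no extra argument about Massey products or Koszul resolutions needed. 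You should repair the inductive step by replacing the derived tensor product with the pullback in $\cCAlg_k^\N$ and stating the homology-level EM collapse explicitly.
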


Let $p\co \N\to \ast$ be the projection. The forgetful functor $\CAlg(\cCAlg_k)\to \cCAlg_k$ can be factored as
\[
\CAlg(\cCAlg_k) \xrightarrow{p^\ast} \CAlg(\cCAlg_k^\N) \xrightarrow{U} \cCAlg_k^\N \xrightarrow{ev_1} \cCAlg_k\, ,
\]
where $p^\ast$ is induced by the constant diagram functor, $U$ is the forgetful functor, and $ev_1$ is the functor sending an $\N$-graded object $(X_n)_{n\geq 0}$ to $X_1$. As each of these functors preserves small limits \cite[3.2.2.5]{HA} and because the categories are presentable, each functor admits a left-adjoint.

\begin{notation} \label{not:sym}
We denote by $\Sym \co \cCAlg_k \to \CAlg(\cCAlg_k^\N)$ the left-adjoint of $ev_1\circ U$.
\end{notation}

In other words, $\Sym(C)$ is the free charged $\mathbb{E}_\infty$-$k$-bialgebra generated by the graded $\mathbb{E}_\infty$-$k$-coalgebra $(C_n)_{n\geq 0}$ with $C_1=C$ and $C_n=0$ for every $n\neq 1$.

\begin{remark}
Informally, $\Sym(V)\simeq \bigoplus_{n\geq 0} (V^{\otimes n})_{h\Sigma_n}$ with the grading given by $n$. Note that homotopy $\Sigma_n$-orbits are used and thus, coalgebra structures aside, if $V$ is an ordinary $k$-module, then $\Sym(V)$ is not in general equivalent to $\Lambda (V)=\bigoplus_{n\geq 0} (V^{\otimes n})_{\Sigma_n}$, the free graded-commutative $k$-algebra generated by $V$. However, there is always a canonical map of $\mathbb{E}_\infty$-$k$-algebras $\Sym(V)\to \Lambda (V)$ and it is an equivalence when $k$ has characteristic zero, which is what we assume.
\end{remark}

\begin{lemma} \label{lem:free}
The inclusion of spaces $BU(1) \to \Vect_{\C}^{\simeq}$ induces an equivalence
\[
\Sym(k[BU(1)])\simeq k[\Vect_{\C}^{\simeq}]
\]
in $\CAlg(\cCAlg_k^\N)$.
\end{lemma}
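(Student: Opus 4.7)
The plan is first to build the comparison map via the universal property of $\Sym$, then to reduce the question to a pointwise check on underlying $k$-modules, and finally to identify that check with a classical computation in rational cohomology.

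For the map: the charge-$1$ component of $\Vect_{\C}^\simeq$ is exactly $BU(1)$, which gives a canonical identification $(ev_1 \circ U)(k[\Vect_{\C}^\simeq]) \simeq k[BU(1)]$ in $\cCAlg_k$. Under the $\Sym \dashv ev_1 \circ U$ adjunction of Notation \ref{not:sym}, the identity of $k[BU(1)]$ corresponds to a canonical map
\[
\varphi \co \Sym(k[BU(1)]) \to k[\Vect_{\C}^\simeq]
\]
in $\CAlg(\cCAlg_k^\N)$; this is the map I claim to be an equivalence, and it agrees by construction with the one induced by the inclusion $BU(1)\hookrightarrow \Vect_{\C}^\simeq$.

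To check that $\varphi$ is an equivalence I would argue that equivalences in $\CAlg(\cCAlg_k^\N)$ are detected after forgetting to $\cCAlg_k^\N$, evaluating at each charge, and finally forgetting to $\Mod_k$; each of these forgetful functors is conservative. So it suffices to show that $\varphi_n$ is an equivalence in $\Mod_k$ for every $n\geq 0$. The informal description of $\Sym$ together with the K{\"u}nneth isomorphism (valid since $k$ is a field) identifies
\[
\Sym(k[BU(1)])_n \simeq \bigl(k[BU(1)]^{\otimes n}\bigr)_{h\Sigma_n} \simeq k\bigl[(BU(1)^n)_{h\Sigma_n}\bigr]\, ,
\]
while the charge-$n$ component of $k[\Vect_{\C}^\simeq]$ is simply $k[BU(n)]$. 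Unwinding the construction of $\varphi$ shows that $\varphi_n$ is induced by the Whitney sum map
\[
\bigl(BU(1)^n\bigr)_{h\Sigma_n} \to BU(n)\, ,
\]
classifying the direct sum of $n$ tautological line bundles.

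The final step is then a classical computation: the Serre spectral sequence of the fibration $BU(1)^n \to (BU(1)^n)_{h\Sigma_n} \to B\Sigma_n$ collapses in characteristic zero, since $H^\ast(B\Sigma_n;k) = k$, and yields $H^\ast((BU(1)^n)_{h\Sigma_n};k) \cong H^\ast(BU(1)^n;k)^{\Sigma_n}$. Borel's theorem identifies the right-hand side with the ring of symmetric polynomials in the Chern roots, which is precisely the image of $H^\ast(BU(n);k)$ pulled back along the Whitney sum; so $\varphi_n$ is a $k$-homology equivalence for every $n$, finishing the argument. The main obstacle I anticipate is not any of these individual steps but rather verifying that the abstractly defined $\varphi_n$ really matches the space-level Whitney sum under the identifications above; once that is settled by carefully tracking what $\Sym$ does on free generators (and what the adjunction unit looks like), the rest of the argument is essentially formal.
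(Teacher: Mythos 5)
Your proposal is correct and follows essentially the same strategy as the paper: reduce to a chargewise check on underlying $k$-modules, identify the charge-$n$ map with the canonical map $(k[BU(1)]^{\otimes n})_{h\Sigma_n}\to k[BU(n)]$, and use characteristic zero to see this is an isomorphism on homology. The paper phrases the last step algebraically (homotopy orbits compute strict coinvariants in characteristic zero, and $(H_\ast(BU(1);k)^{\otimes n})_{\Sigma_n}\cong H_\ast(BU(n);k)$) whereas you rephrase it geometrically via the space $(BU(1)^n)_{h\Sigma_n}$, the Serre spectral sequence, and Borel's theorem; these are equivalent justifications of the same fact.
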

\begin{proof}
To check that the induced map $\Sym(k[BU(1)])\to k[\Vect_{\C}^{\simeq}]$ is an equivalence, we can apply the forgetful functor to $\Mod_k^\N$ and check that the resulting map of graded modules is an equivalence in each degree. But in degree $n$ this is the canonical map
\[
(k[BU(1)]^{\otimes n})_{h\Sigma_n} \to k[BU(n)]\,;
\]
since $k$ has characteristic zero, on homotopy this becomes the canonical map
\[
(H_\ast(BU(1);k)^{\otimes n})_{\Sigma_n} \to H_\ast(BU(n);k)\,,
\]
which is an isomorphism.
\end{proof}

\begin{lemma} \label{lem:free2}
The inclusion of spaces $\Map(S^1,BU(1)) \to M_1$ induces an equivalence
\[
\Sym(k[\Map(S^1,BU(1))])\simeq k[M_1]
\]
in $\CAlg(\cCAlg_k^\N)$.
\end{lemma}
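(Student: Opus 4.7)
The plan mirrors the proof of Lemma \ref{lem:free}. The canonical comparison map $\phi \colon \Sym(k[\Map(S^1,BU(1))]) \to k[M_1]$ in $\CAlg(\cCAlg_k^\N)$ is adjoint to the inclusion of $\Map(S^1,BU(1))$ as the charge-one stratum of $M_1$. Since the forgetful functor $\CAlg(\cCAlg_k^\N)\to \Mod_k^\N$ is conservative, verifying that $\phi$ is an equivalence reduces to a degree-by-degree check, and in charge $n$ this becomes
\[
(k[\Map(S^1,BU(1))]^{\otimes n})_{h\Sigma_n} \to k[\Map(S^1,BU(n))]\, .
\]
Because $k$ has characteristic zero, homotopy $\Sigma_n$-orbits agree with strict $\Sigma_n$-orbits on homology, so it suffices to verify that the induced map
\[
\bigl(H_\ast(\Map(S^1,BU(1));k)^{\otimes n}\bigr)_{\Sigma_n} \to H_\ast(\Map(S^1,BU(n));k)
\]
is an isomorphism.

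There are two natural strategies. The more abstract approach bootstraps off Lemma \ref{lem:free} by exploiting the equivalence $M_1\simeq \Vect_\C^\simeq \otimes S^1$ of charged $\mathbb{E}_\infty$-spaces, obtained by combining Proposition \ref{prop:identificationkdef} with Proposition \ref{prop:key}(1) applied to $(G,A)=(1,\Z)$ (using $\widehat{\Z}\simeq S^1$). The functor $k[-]\colon\CAlg(\Spc^\N)\to \CAlg(\cCAlg_k^\N)$ is a left adjoint, hence preserves tensoring over $\Spc$; by Lemma \ref{lem:free} we obtain $k[M_1]\simeq \Sym(k[BU(1)])\otimes S^1$. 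Both $\Sym$ (Notation \ref{not:sym}) and $k[-]\colon \Spc\to \cCAlg_k$ are also left adjoints between presentable $\infty$-categories, so each preserves tensoring over $\Spc$. Combined with the splitting $\Map(S^1,BU(1))\simeq BU(1)\times S^1$ of the free loop space (valid since $BU(1)\simeq K(\Z,2)$ is a connected H-space with $\Omega BU(1)\simeq S^1$), this produces a chain of equivalences $\Sym(k[BU(1)])\otimes S^1 \simeq \Sym(k[BU(1)\times S^1]) \simeq \Sym(k[\Map(S^1,BU(1))])$, giving the desired equivalence pending identification with $\phi$.

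The main obstacle depends on the strategy. For the abstract approach one must verify that the equivalence produced by the chain above agrees with $\phi$, which by the universal property of $\Sym$ reduces to checking that the induced map on the charge-one stratum is the identity on $k[\Map(S^1,BU(1))]$; this is a naturality chase through the component equivalences, most delicately the H-space splitting. For the direct approach, the task is the degree-$n$ computation itself. Using the Leray--Hirsch description of Lemma \ref{lem:LBUn} and the naturality/derivation property of the free suspension $\sigma$, one computes explicitly how $c_j$ and $a_j=\sigma(c_j)$ pull back along the direct sum map $\Map(S^1,BU(1))^n\to \Map(S^1,BU(n))$, and shows that the image is exactly the $\Sigma_n$-invariants. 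A matching of Poincaré series on both sides (in each total degree) reduces the question to injectivity, which is furnished by this explicit description together with the splitting principle for $BU(n)$.
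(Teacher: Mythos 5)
Your direct strategy is the paper's own argument. The paper dispatches this lemma in one line by citing the $s=1$ case of Example \ref{ex:notfree} (freeness of $H_\ast(\Kdef(\Z);k)$ on the classes $\xi_{\emptyset,n}$ and $\xi_{\{1\},n}$) and then running the same reduction as in Lemma \ref{lem:free}: conservativity of the forgetful functor plus the identification of homotopy $\Sigma_n$-orbits with strict orbits in characteristic zero. You set this up correctly. Where you diverge is in how you obtain the underlying isomorphism $\bigl(H_\ast(\Map(S^1,BU(1));k)^{\otimes n}\bigr)_{\Sigma_n}\xrightarrow{\sim} H_\ast(\Map(S^1,BU(n));k)$. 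The paper gets this from Theorem \ref{thm:quotientoffreealgebra} via the group-completion map and the Hopf ring structure on $\Omega B\Kdef(F_s)$; you propose instead to dualize and compute directly with Leray--Hirsch (Lemma \ref{lem:LBUn}), the derivation property of the free suspension, and the invariant theory of $k[x_1,\dots,x_n]\otimes\bigwedge(y_1,\dots,y_n)$ together with the splitting principle. This is a legitimate alternative derivation of the same input and arguably more self-contained, though you leave the invariant-theoretic identification (which amounts to Solomon's theorem for symmetric differential forms) as a sketch rather than spelling it out.

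Your ``abstract'' approach is a genuinely different observation and is attractive: $M_1\simeq\Vect_\C^\simeq\otimes S^1$ (via Propositions \ref{prop:identificationkdef} and \ref{prop:key}), both $k[-]$ and $\Sym$ preserve tensoring over $\Spc$ as left adjoints between presentable $\infty$-categories, and $\Map(S^1,BU(1))\simeq BU(1)\times S^1$. The steps are individually correct; the gap you yourself flag is real. The chain produces \emph{an} equivalence $\Sym(k[\Map(S^1,BU(1))])\simeq k[M_1]$ but not obviously the one induced by the inclusion, and the lemma is used (Corollary \ref{cor:basecase}) in a way that relies on the specific map. Resolving this by checking the charge-one component is the identity is plausible but not carried out, and the H-space splitting of $\Map(S^1,BU(1))$ is not canonical. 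As written, the abstract route does not by itself close the proof; your direct route does, and matches the paper.

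Minor point: once you know the image of $H^\ast(\Map(S^1,BU(n));k)$ in $H^\ast(\Map(S^1,BU(1))^n;k)$ lands in the $\Sigma_n$-invariants and hits a free generating set for them, injectivity and the Poincar\'e series match come out simultaneously; you do not need to argue both independently.
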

\begin{proof}
Similarly as in Lemma \ref{lem:free}, it follows from the case $s=1$ of Example \ref{ex:notfree} that the underlying map of $k$-modules is an equivalence.
\end{proof}

It follows that the map $k[M_1]\to k[\Vect_{\C}^\simeq]$ is equivalent in $\CAlg(\cCAlg_k^\N)$ to the map
\[
\Sym(k[\Map(S^1,BU(1))])\to \Sym(k[BU(1)])
\]
induced by $ev_1\co \Map(S^1,BU(1))\to BU(1)$.

\begin{lemma} \label{lem:formalccalg}
The map $ev_1\co \Map(S^1,BU(1))\to BU(1)$ is formal in $\cCAlg_k$.
\end{lemma}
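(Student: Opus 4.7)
The plan is to reduce the formality question to elementary computations for the two Eilenberg--MacLane factors $BU(1)$ and $S^1$. Since $BU(1)$ has the homotopy type of a topological abelian group (e.g.\ via the model $BS^1$ for $K(\mathbb{Z},2)$), pointwise multiplication makes $\Map(S^1,BU(1))$ a topological abelian group and $ev_1$ a continuous group homomorphism with kernel $\Omega BU(1)\simeq S^1$. The constant-maps section $BU(1)\to \Map(S^1,BU(1))$ splits this extension, producing an isomorphism of topological abelian groups
\[
\Map(S^1,BU(1)) \cong BU(1)\times S^1
\]
under which $ev_1$ is the projection onto the first factor. Applying $k[-]$ and using the K\"unneth isomorphism yields an equivalence in $\cCAlg_k$
\[
k[\Map(S^1,BU(1))] \simeq k[BU(1)] \otimes_k k[S^1]
\]
under which $ev_1$ corresponds to $\mathrm{id}\otimes \varepsilon$, where $\varepsilon\colon k[S^1]\to k$ is the augmentation.

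Since the tensor product of two formal maps in $\cCAlg_k$ is formal, it now suffices to show that $k[BU(1)]$ is formal in $\cCAlg_k$ and that $\varepsilon\colon k[S^1]\to k$ is formal. The argument for $\varepsilon$ is elementary: $H_\ast(S^1;k)\cong k\oplus k[1]$ is a cocommutative coalgebra with primitive degree-$1$ generator, and a small simplicial model of $S^1$ directly exhibits $k[S^1]\simeq H_\ast(S^1;k)$ in $\cCAlg_k$ sending $\varepsilon$ to the projection onto the degree-zero summand. For $k[BU(1)]$, the space $BU(1)=\mathbb{CP}^\infty=K(\mathbb{Z},2)$ has cohomology $k[c_1]$ concentrated in even degrees, and over a field of characteristic zero this degree constraint forces formality as an $\mathbb{E}_\infty$-$k$-coalgebra.

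The principal obstacle is upgrading the classical rational formality of $\mathbb{CP}^\infty$ to formality of $k[BU(1)]$ in the full $\infty$-categorical sense of $\cCAlg_k$. Two clean approaches are available: either cite a general formality statement for Eilenberg--MacLane spaces $K(\mathbb{Z},2n)$ as $\mathbb{E}_\infty$-$k$-coalgebras in characteristic zero, or proceed inductively along the skeletal filtration $\mathbb{CP}^1\subset \mathbb{CP}^2\subset\cdots$, using at each stage the vanishing of the odd-dimensional rational homotopy of $\mathbb{CP}^{n-1}$ (or the fact that no nonzero operation can lower even-degree generators to the relevant odd-degree target) to rigidify the coalgebra-level attaching data, and then passing to the filtered colimit.
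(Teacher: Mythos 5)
Your splitting $\Map(S^1,BU(1))\cong BU(1)\times S^1$ via the topological abelian group structure on $BU(1)$ is a valid and clean reduction that the paper does not make; it correctly reduces the lemma to formality of $k[BU(1)]$ and of $\varepsilon\co k[S^1]\to k$ in $\cCAlg_k$, together with the observation that formality is preserved by $\otimes_k$. However, the step you flag as ``the principal obstacle'' --- formality of $k[BU(1)]$ as an $\mathbb{E}_\infty$-$k$-coalgebra --- is precisely the content of the lemma, and you do not actually prove it. Saying that ``the degree constraint forces formality'' is not an argument in $\cCAlg_k$: having cohomology concentrated in even degrees does not by itself produce an equivalence of $\mathbb{E}_\infty$-coalgebras. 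The two routes you sketch are both incomplete. There is no reference given for a general formality statement for $K(\Z,2n)$ phrased in $\cCAlg_k$, and the skeletal induction would require identifying the space in which the coalgebra-level attaching obstructions live and showing it vanishes, which you do not set up. The claim for $S^1$ also glosses over a real point: the naive coproduct on normalized chains of a small simplicial model is Alexander--Whitney, which is not strictly cocommutative, so ``a small model directly exhibits'' the equivalence in $\cCAlg_k$ is too quick even there.

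The idea that closes this gap in the paper is to dualize. Since $k[\Map(S^1,BU(1))]$ and $k[BU(1)]$ are of finite type over the field $k$, $k$-linear duality is a contravariant symmetric monoidal equivalence on this subcategory, so formality of the map in $\cCAlg_k$ is equivalent to formality of $ev_1^\ast$ in $\CAlg_k$. There one uses the universal property of the free $\mathbb{E}_\infty$-$k$-algebra $\Sym$: the classes $x\in H^2$ and $y\in H^1$ are represented by maps $k[-2]\to \Map_k(k[BU(1)],k)$ and $k[-1]\to \Map_k(k[\Map(S^1,BU(1))],k)$, inducing maps out of $\Sym(k[-2])$ and $\Sym(k[-2]\oplus k[-1])$ which are equivalences because in characteristic zero $\Sym$ agrees with the free graded-commutative algebra and the target cohomology rings are exactly $k[x]$ and $k[x]\otimes\bigwedge(y)$. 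The resulting square commutes, giving formality. Your factor $k[BU(1)]$ is handled by the same device with $\Sym(k[-2])$ alone, so your reduction is compatible with this argument --- but without the dualize-and-map-from-$\Sym$ step your proposal does not establish the lemma.
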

\begin{proof}
The statement to be proved is that there is a commutative diagram in $\cCAlg_k$
\[
\xymatrix{
k[\Map(S^1,BU(1))]\ar[r] \ar[d]^-{\simeq} &  k[BU(1)] \ar[d]^-{\simeq} \\
H_\ast(\Map(S^1,BU(1));k) \ar[r] & H_\ast(BU(1);k)
}
\]
where both horizontal arrows are induced by $ev_1$. Since each module in the diagram is of finite type over $k$, we may dualise and construct a commutative diagram in $\CAlg_k$ instead. Write $H^\ast(\Map(S^1,BU(1));k)\cong k[x]\otimes \bigwedge(y)$ where $|x|=2$ and $|y|=1$ and such that $ev_1^\ast\co H^\ast(BU(1);k)\to H^\ast(\Map(S^1,BU(1));k)$ becomes the inclusion $k[x]\hookrightarrow k[x]\otimes \bigwedge(y)$. The classes $x$ and $y$ are represented by maps $k[-2]\to \Map_k(k[BU(1)],k)$ and $k[-1]\to  \Map_k(k[\Map(S^1,BU(1))],k)$, which induce a commutative diagram
\[
\begin{tikzcd}
k[x] \arrow[hookrightarrow]{d} & \Sym(k[-2]) \arrow[d] \arrow[r, "\sim"] \arrow[l, "\sim" swap] & \Map_k(k[BU(1)],k) \arrow[d]  \\
k[x]\otimes \bigwedge(y) & \Sym(k[-2]\oplus k[-1]) \arrow[l, "\sim" swap] \arrow[r, "\sim"] & \Map_k(k[\Map(S^1,BU(1))],k)
\end{tikzcd}
\]
as wanted.
\end{proof}

\begin{corollary} \label{cor:basecase}
The map $k[M_1]\to k[\Vect_{\C}^\simeq]$ is formal in $\CAlg(\cCAlg_k^\N)$.
\end{corollary}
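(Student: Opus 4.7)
The plan is to obtain formality of $k[M_1]\to k[\Vect^\simeq_\C]$ by applying the free functor $\Sym\co \cCAlg_k \to \CAlg(\cCAlg_k^\N)$ of Notation \ref{not:sym} to the formality datum furnished by Lemma \ref{lem:formalccalg}, and then identifying the resulting square with the desired one via Lemmas \ref{lem:free} and \ref{lem:free2}.

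Concretely, the first step is to apply $\Sym$ to the commutative square in $\cCAlg_k$ produced by Lemma \ref{lem:formalccalg}. Since $\Sym$ is a functor of $\infty$-categories it preserves equivalences, so this yields a commutative square in $\CAlg(\cCAlg_k^\N)$ with vertical equivalences, whose top row is $\Sym(k[\Map(S^1,BU(1))])\to \Sym(k[BU(1)])$ induced by $ev_1$, and whose bottom row is $\Sym(H_\ast(\Map(S^1,BU(1));k))\to \Sym(H_\ast(BU(1);k))$. By Lemmas \ref{lem:free} and \ref{lem:free2}, and by the observation made just after Lemma \ref{lem:free2} that $k[M_1]\to k[\Vect^\simeq_\C]$ is equivalent in $\CAlg(\cCAlg_k^\N)$ to $\Sym$ applied to $ev_1$, the top row of this square is identified with the evaluation map $k[M_1]\to k[\Vect^\simeq_\C]$. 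Thus formality reduces to identifying the bottom row with the induced homology map $H_\ast(M_1;k) \to H_\ast(\Vect^\simeq_\C;k)$ in $\CAlg(\cCAlg_k^\N)$.

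For this final identification, the key point is that because $k$ has characteristic zero, the natural map $(V^{\otimes n})_{h\Sigma_n} \to (V^{\otimes n})_{\Sigma_n}$ is an equivalence for every $V\in \cCAlg_k$ and every $n\geq 0$ (Maschke's theorem applied degreewise); hence $\Sym$ of a discrete object of $\cCAlg_k$ is again discrete and computes the classical free graded-commutative bialgebra on the underlying graded $k$-module equipped with the induced comultiplication. Combining this with the symmetric monoidality of $\pi_\ast\co \cCAlg_k\to \cCAlg_k$ (via K\"unneth, as $k$ is a field) and with Lemmas \ref{lem:free} and \ref{lem:free2}, one obtains natural equivalences $\Sym(H_\ast(BU(1);k))\simeq H_\ast(\Vect^\simeq_\C;k)$ and $\Sym(H_\ast(\Map(S^1,BU(1));k))\simeq H_\ast(M_1;k)$ in $\CAlg(\cCAlg_k^\N)$, compatibly with $ev_1$, which assembles into the desired formality square. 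The main subtlety to track is that this comparison respects the coalgebra structures and not merely the algebra structures; this is precisely where the symmetric monoidality of $\pi_\ast$ is used in earnest, since it ensures that the comultiplications on both sides match under the identification.
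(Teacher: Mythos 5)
Your proposal is correct and follows essentially the same route as the paper: apply $\Sym$ to the formality square of Lemma~\ref{lem:formalccalg}, identify the top row with $k[M_1]\to k[\Vect^\simeq_\C]$ via Lemmas~\ref{lem:free} and~\ref{lem:free2}, and identify the bottom row with the homology map. The paper states this in one line, while you usefully spell out the remaining point — that in characteristic zero $\Sym$ commutes with $\pi_\ast$ (homotopy orbits agree with strict orbits, $\pi_\ast$ is symmetric monoidal by K\"unneth), so that $\Sym(H_\ast(-;k))$ is identified with $H_\ast(-;k)$ of $\Sym(k[-])$ on the generators — which is exactly what makes the paper's terse argument go through.
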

\begin{proof}
This follows by applying $\Sym(k[-])$ to the map of Lemma \ref{lem:formalccalg} and using the equivalences of Lemmas \ref{lem:free} and \ref{lem:free2}.
\end{proof}

\begin{notation}
We let $\square$ denote the pullback in the $\infty$-category $\cCAlg_k$.
\end{notation}

We will use the Eilenberg-Moore theorem \cite{EM} in the following form.

\begin{theorem}[Eilenberg-Moore] \label{thm:em}
Let there be given a pullback square in $\mathcal{S}$
\[
\xymatrix{
Z\ar[r] \ar[d] & Y \ar[d] \\
X \ar[r] & B
}
\]
and assume that $B$ is $1$-connected. 
\begin{enumerate}
\item[(1)] The canonical map in $\cCAlg_k$
\[
k[Z] \to k[X]\square_{k[B]} k[Y]
\]
is an equivalence.
\item[(2)] If $H_\ast(Y;k)$ is a free $H_\ast(B;k)$-comodule, then the canonical map in $\cCAlg_k$
\[
H_\ast(Z;k) \to H_\ast(X;k) \square_{H_\ast(B;k)} H_\ast(Y;k)
\]
is an equivalence.
\end{enumerate}
\end{theorem}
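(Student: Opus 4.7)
The plan is to realize both sides as totalizations of a common cosimplicial object built from $X$, $Y$, and $B$, and then to invoke convergence of the Eilenberg--Moore spectral sequence under $1$-connectedness of $B$. For part (1), since every space is canonically an $\mathbb{E}_\infty$-coalgebra via the diagonal, the maps $X \to B \leftarrow Y$ make $X$ and $Y$ into $B$-comodules in $\Spc$, and the homotopy pullback $Z = X \times_B Y$ is the totalization of the two-sided cobar cosimplicial space $[n] \mapsto X \times B^{\times n} \times Y$. Applying the symmetric monoidal functor $k[-]\co \Spc \to \cCAlg_k$ levelwise produces a cosimplicial object in $\cCAlg_k$ whose totalization, using the K\"unneth isomorphism on each level, is by construction the pullback $k[X] \square_{k[B]} k[Y]$ in $\cCAlg_k$. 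The canonical comparison map of the statement is the assembly $k[\mathrm{Tot}(-)] \to \mathrm{Tot}(k[-])$ applied to this diagram, and showing it is an equivalence reduces to strong convergence of the associated spectral sequence
\[
E_2^{s,t} = \Cotor^{H_\ast(B;k)}_{s,t}(H_\ast(X;k), H_\ast(Y;k)) \Longrightarrow H_{t-s}(Z;k).
\]

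The $1$-connectedness hypothesis enters precisely at the convergence step: it guarantees that $\tilde{H}_\ast(B;k)$ is concentrated in degrees $\geq 2$, so that on each fixed internal degree the cobar filtration is eventually constant, which gives the required strong convergence. This is the classical Eilenberg--Moore input, which I would import from \cite{EM} or from a modern $\infty$-categorical reformulation rather than redo here. For part (2), granted the equivalence from part (1), it suffices to identify $\pi_\ast(k[X] \square_{k[B]} k[Y])$ with the underived cotensor $H_\ast(X;k) \square_{H_\ast(B;k)} H_\ast(Y;k)$ as $\mathbb{E}_\infty$-$k$-coalgebras. Freeness of $H_\ast(Y;k)$ as an $H_\ast(B;k)$-comodule forces all higher $\Cotor$ groups to vanish, so the spectral sequence collapses onto the $s=0$ column, which by definition is the equalizer of the two coaction maps $H_\ast(X;k) \otimes H_\ast(Y;k) \rightrightarrows H_\ast(X;k) \otimes H_\ast(B;k) \otimes H_\ast(Y;k)$, i.e., the underived cotensor. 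Compatibility with the coalgebra structure is automatic from the construction, since everything in sight is obtained by applying symmetric monoidal functors to the cobar resolution.

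The main obstacle is the convergence of the assembly map in part (1); the $1$-connectedness hypothesis cannot be dropped, as counterexamples already exist for $B = S^1$. All other ingredients---functoriality of the cobar construction, compatibility of $k[-]$ with finite products through the K\"unneth isomorphism over a field, and collapse under the freeness hypothesis---are essentially formal consequences of the symmetric monoidality of $k[-]\co \Spc \to \cCAlg_k$ and do not require further input.
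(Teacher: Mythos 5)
Your proposal is correct and takes essentially the same approach as the paper: for (1) it reduces to the classical Eilenberg--Moore convergence theorem via the two-sided cobar construction (which is exactly what the cited references do), and for (2) it uses the collapse of the Eilenberg--Moore/Cotor spectral sequence under the freeness hypothesis, which is precisely the paper's argument.
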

\begin{proof}
The first statement is one way of phrasing the classical theorem of Eilenberg and Moore \cite{EM}, who constructed the pullback explicitly as a cobar complex (cf. \cite[Section 5.2.2]{HA}). In the situation to which we will apply the theorem, all spaces have finite type over $k$; in this case one can obtain the statement also from \cite[Corollary 1.1.10]{DAGXIII} by passing to duals.

The second item follows from the collapse of the Eilenberg-Moore spectral sequence
\[
E^2=\mathrm{Cotor}^\ast_{H_\ast(B;k)}(H_\ast(X;k),H_\ast(Y;k)) \Longrightarrow H_\ast(Z;k)\, .
\]
On homotopy, the map in question (displayed in (2)) is the composition of the canonical map into the strict cotensor product
\[
H_\ast(Z;k) \to \mathrm{Cotor}^0_{H_\ast(B;k)}(H_\ast(X;k),H_\ast(Y;k))
\]
followed by the inclusion of the strict into the derived cotensor product. Since $H_\ast(Y;k)$ is a free $H_\ast(B;k)$-comodule, the higher derived cotensor products vanish and the spectral sequence collapses onto the $0$-th column.
\end{proof}

\begin{notation}
We let $\square^\N$ denote the pullback in the $\infty$-category $\CAlg(\cCAlg_k^\N)$.
\end{notation}

\begin{corollary} \label{cor:equivalencechains}
Consider the pullback square in $\CAlg(\Spc^\N)$
\[
\xymatrix{
M_s\ar[r] \ar[d] & M_{1} \ar[d] \\
M_{s-1} \ar[r] & M
}
\]
For every $s\geq 2$ the canonical maps
\begin{enumerate}
\item[(1)] $k[M_s]\to k[M_1]\square^\N_{k[M]} k[M_{s-1}]$
\item[(2)] $H_\ast(M_s;k) \to H_\ast(M_1;k)\square^\N_{H_\ast(M;k)} H_\ast(M_{s-1};k)$
\end{enumerate}
are equivalences in $\CAlg(\cCAlg_k^\N)$.
\end{corollary}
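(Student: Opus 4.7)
The plan is to reduce both assertions to a charge-by-charge application of the Eilenberg--Moore theorem (Theorem \ref{thm:em}).

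First I would unpack the pullbacks. By \cite[3.2.2.5]{HA} the forgetful functor $\CAlg(\cCAlg_k^\N) \to \cCAlg_k^\N$ is conservative and preserves small limits, and since $\cCAlg_k^\N = \Fun(\mathrm{N}(\N), \cCAlg_k)$, limits there are formed pointwise. Hence $\square^\N$ agrees, in each charge $n$, with the pullback $\square$ in $\cCAlg_k$ of the charge-$n$ pieces. The same reasoning applied to $\CAlg(\Spc^\N)$ shows that the square defining $M_s$ restricts, in charge $n$, to the pullback square of spaces
\[
\xymatrix{
\Map(\vee^s S^1, BU(n)) \ar[r] \ar[d] & \Map(S^1, BU(n)) \ar[d]^-{ev_1} \\
\Map(\vee^{s-1} S^1, BU(n)) \ar[r]_-{ev_1} & BU(n),
}
\]
noting that the charge-$n$ piece of $M = \Vect_{\C}^{\simeq}$ is $BU(n)$. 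Since the forgetful functor $\CAlg(\cCAlg_k^\N) \to \cCAlg_k^\N$ is conservative, it suffices to verify both equivalences charge-by-charge in $\cCAlg_k$.

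For (1), the base $BU(n)$ of the above square is simply-connected, so Theorem \ref{thm:em}(1) applies directly and produces the required equivalence at each charge $n$. For (2), I would invoke Theorem \ref{thm:em}(2), whose hypothesis requires $H_\ast(\Map(S^1, BU(n));k)$ to be a cofree $H_\ast(BU(n);k)$-comodule. This is precisely the content of Lemma \ref{lem:LBUn} in the case $s=1$: dualising the module isomorphism $H^\ast(\Map(S^1, BU(n));k) \cong H^\ast(U(n);k) \otimes H^\ast(BU(n);k)$ of $H^\ast(BU(n);k)$-modules yields the desired cofree comodule decomposition of homology. Theorem \ref{thm:em}(2) then gives the claimed equivalence at each charge.

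The main obstacle is the bookkeeping in the first step: one must carefully check that the various forgetful functors $\CAlg(\cCAlg_k^\N)\to \cCAlg_k^\N\to \cCAlg_k$ behave correctly with respect to pullbacks and that the pullbacks on both sides of the asserted maps indeed decompose pointwise in the $\N$-grading. Once this reduction is established, both parts are formal consequences of Theorem \ref{thm:em} and Lemma \ref{lem:LBUn}.
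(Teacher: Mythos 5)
Your proof is correct and takes essentially the same route as the paper: reduce to charge $n$ by conservativity and limit-preservation of the forgetful functors, identify the charge-$n$ square with the mapping-space square over $BU(n)$, and apply Theorem \ref{thm:em} (parts (1) and (2)), with Lemma \ref{lem:LBUn} furnishing the cofreeness hypothesis for part (2). One small slip: the conservativity of $\CAlg(\cCAlg_k^\N)\to\cCAlg_k^\N$ is \cite[3.2.2.6]{HA}, not 3.2.2.5, which is the limit-preservation statement.
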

\begin{proof}
This follows from the facts that the forgetful functor $\CAlg(\cCAlg_k^\N)\to \cCAlg_k^\N$ is conservative (see \cite[3.2.2.6]{HA}) and preserves pullbacks (see \cite[3.2.2.5]{HA}), pullbacks in $\cCAlg^\N_k$ are formed degreewise, and for each $n\geq 0$ the canonical maps in $\cCAlg_k$
\begin{enumerate}
\item[(1)] $k[L_sBU(n)] \to k[L_1 BU(n)] \square_{k[BU(n)]} k[L_{s-1}BU(n)]$
\item[(2)] $H_\ast(L_s BU(n);k) \to H_\ast(L_1BU(n);k) \square_{H_\ast(BU(n);k)} H_\ast(L_{s-1}BU(n);k)$
\end{enumerate}
are equivalences by Theorem \ref{thm:em}, where we wrote $L_sBU(n):=\Map(\vee^s S^1,BU(n))$.
\end{proof}

\begin{proof}[Proof of Proposition \ref{prop:keyformality}]
The proof is by induction over $s$. The case $s=0$ just says that $k[\Vect^\simeq_\C]$ is formal, which follows from Lemma \ref{lem:free}. The case $s=1$ is Corollary \ref{cor:basecase}. Now assume that $k[M_{s-1}]\to k[\Vect_\C^\simeq]$ is formal. By Corollary \ref{cor:equivalencechains} there are equivalences in $\CAlg(\cCAlg_k^\N)$
\begin{equation*}
\begin{split}
k[M_s] & \simeq k[M_1]\square^\N_{k[M]} k[M_{s-1}]\\& \simeq H_\ast(M_1;k)\square^\N_{H_\ast(M;k)} H_\ast(M_{s-1};k) \\&\simeq H_\ast(M_s;k)
\end{split}
\end{equation*}
the first and last of which are equivalences over $k[\Vect_\C^\simeq]$ and $H_\ast(\Vect_\C^\simeq;k)$, respectively. The second equivalence covers the equivalence $k[\Vect_\C^\simeq]\simeq H_\ast(\Vect_\C^\simeq)$ by the induction hypothesis. This completes the induction step.
\end{proof}

Proposition \ref{prop:keyformality} now implies Theorem \ref{thm:hh}.


\section{Example: $\Hom(F_s\times \Z^r,U(2))$} \label{sec:example}

We give an application of Theorem \ref{thm:hh}.

\begin{theorem} \label{thm:u2}
For any $r,s\geq 0$, and with coefficients in a field $k$ of characteristic zero, the $U(2)$-equivariant cohomology of $\Hom(F_s\times \Z^r,U(2))$ is a free $H^\ast(BU(2);k)$-module with Poincar{\'e} polynomial
\begin{equation*}
\begin{split}
p(s,r;t)=&(1+t)^{s+r}\left((1+t^3)^s+p_1(r,s;t)+p_2(r,s;t)\right)\, ,
\end{split}
\end{equation*}
where
\begin{alignat*}{1}
p_1(s,r;t) & =\frac{(1+t^2)(1+t)^s((1+t)^r-1)}{2}, \\
p_2(s,r;t) &=\frac{(1-t^2)(1-t)^s((1-t)^r-1)}{2}.
\end{alignat*}
\end{theorem}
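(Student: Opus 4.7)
The plan is to combine Corollary \ref{cor:hh} (which with $A = \Z^r$ identifies $H^\ast_{U(2)}(\Hom(F_s \times \Z^r, U(2));k)$ with the charge-$2$ part of $\HH^{T^r}_\ast(H_\ast(\Kdef(F_s);k))$ and guarantees freeness over $H^\ast(BU(2);k)$) with a geometric Mayer--Vietoris argument on the representation variety. I would stratify
\[
\Hom(F_s \times \Z^r, U(2)) = U \cup V,
\]
with $U := U(2)^s \times U(1)^r$ the locus where each $B_i$ is central and $V := \Hom(\Z^{s+r}, U(2))$ the fully abelian locus; this cover is exhaustive because a non-abelian subgroup of $U(2)$ has centraliser $U(1)\cdot I$, forcing $\underline{B}\in U(1)^r$ whenever $\underline{A}$ is non-abelian, and the intersection is $U \cap V = \Hom(\Z^s, U(2)) \times U(1)^r$.

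For $U$, the $U(1)^r$ factor carries trivial $U(2)$-action and $U(2)^s_{hU(2)} \simeq \Map(\vee^s S^1, BU(2))$ by Proposition \ref{prop:identificationkdef}, so Lemma \ref{lem:LBUn} provides a free $H^\ast(BU(2);k)$-module basis of Poincar\'e polynomial $((1+t)(1+t^3))^s(1+t)^r = (1+t)^{s+r}(1+t^3)^s$, which is the first summand of $p(s,r;t)$. For $V$ and $U\cap V$ I would use the classical identification $\Hom(\Z^k, U(2)) \cong U(2) \times_{N(T)} T^k$ (valid because commuting unitary matrices simultaneously diagonalise), with $W = N(T)/T = \Z/2$ swapping the two factors of $T \cong U(1)^2$, giving
\[
H^\ast_{U(2)}(\Hom(\Z^k, U(2));k) \cong \bigl(H^\ast(BT;k) \otimes H^\ast(T;k)^{\otimes k}\bigr)^W.
\]
Splitting both factors into Weyl isotypic pieces via $H^\ast(BT) = H^\ast(BU(2)) \oplus (u_1 - u_2)H^\ast(BU(2))$ (the second summand being a free rank-one shift of $H^\ast(BU(2))$) yields a free $H^\ast(BU(2);k)$-module with basis Poincar\'e polynomial $\tfrac{1}{2}[(1+t^2)(1+t)^{2k} + (1-t^2)^{k+1}]$.

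To assemble the pieces I would observe that the restriction $H^\ast(V) \to H^\ast(U \cap V)$ is surjective: on the torus model this comes from the surjection $H^\ast(T) = \Lambda(y_1,y_2) \twoheadrightarrow \Lambda(y) = H^\ast(U(1))$ induced by the diagonal inclusion $U(1) \hookrightarrow T$ at each of the last $r$ factors, which is preserved by Weyl averaging in characteristic zero. Hence the Mayer--Vietoris sequence reduces to a split short exact sequence of free $H^\ast(BU(2);k)$-modules, giving the additive identity $p_X = p_U + p_V - p_{U \cap V}$ on basis Poincar\'e polynomials. Substituting the formulas above, a direct manipulation gives $p_V - p_{U \cap V} = (1+t)^{s+r}(p_1 + p_2)$ with the stated $p_1$ and $p_2$, so $p_X = (1+t)^{s+r}[(1+t^3)^s + p_1 + p_2] = p(s,r;t)$. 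The principal obstacle is the careful algebraic bookkeeping of the Weyl-isotypic decomposition required to extract precisely the factors $(1\pm t)^s((1\pm t)^r - 1)$ in $p_1$ and $p_2$, with scalar contributions correctly accounted for in $p_U$ and subtracted from $p_V$ via the intersection term $p_{U \cap V}$; the Hochschild homology framework of Corollary \ref{cor:hh} serves as an independent guarantee of freeness.
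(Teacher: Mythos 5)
Your argument is correct in outline but takes a genuinely different route from the paper. The paper's proof is entirely algebraic: after Theorem~\ref{thm:hh} reduces the equivariant cohomology to the charge-$2$ part of $\HH^{(S^1)^r}_\ast(H_\ast(\Kdef(F_s);k))$, it applies Lemma~\ref{lem:hhcharged} and Corollary~\ref{cor:hhchargedalgebra} to evaluate the truncated iterated Hochschild complex, and Lemma~\ref{lem:alt2cofree} plus Lemma~\ref{lem:poincarepolynomial} to compute the Poincar\'e series of $\Alt^2$ of a free comodule. You instead decompose the variety geometrically into the locus $U$ where all $\Z^r$-coordinates are central and the fully abelian locus $V=\Hom(\Z^{s+r},U(2))$, and run a $U(2)$-equivariant Mayer--Vietoris argument; the elementary but crucial observation that a non-abelian $\underline{A}$ forces $\underline{B}$ into the centre is what makes the cover exhaustive. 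Your final arithmetic checks out and reproduces the factorisation $(1+t)^{s+r}[(1+t^3)^s+p_1+p_2]$. What your approach buys is geometric transparency and an alternative, self-contained proof of freeness (the Mayer--Vietoris short exact sequence of graded free $H^\ast(BU(2);k)$-modules splits, so $H^\ast_{U(2)}(\Hom(F_s\times\Z^r,U(2));k)$ is a graded direct summand of a free module, hence free), at the cost of invoking Baird's computation of $H^\ast_G(\Hom(\Z^n,G);\Q)$; what the paper's approach buys is that it generalises in principle to all $n$ via Proposition~\ref{prop:complexoffree} without any stratification of the variety.

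Two precision issues you should repair. First, $\Hom(\Z^k,U(2))$ is \emph{not} homeomorphic to $U(2)\times_{N(T)}T^k$ --- the map $(g,\underline{t})\mapsto g\underline{t}g^{-1}$ is surjective but has positive-dimensional fibres over central tuples (e.g.\ the fibre over $(I,\dots,I)$ is $U(2)/N(T)$). It is only a rational cohomology equivalence; this is exactly Baird's theorem \cite{Baird}, which you should cite explicitly and invoke in the form $H^\ast_{U(2)}(\Hom(\Z^k,U(2));k)\cong (H^\ast(BT;k)\otimes H^\ast(T;k)^{\otimes k})^W$, checking naturality under the inclusion $T^s\times\Delta(U(1))^r\hookrightarrow T^{s+r}$ to get the surjectivity you need. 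Second, $U$ and $V$ are \emph{closed} subvarieties, not open, so the Mayer--Vietoris sequence requires justification; this can be supplied by the equivariant triangulation theorem of Park--Suh (already used in Lemma~\ref{lem:cofibrant}), which makes $U$, $V$, $U\cap V$ equivariant CW-subcomplexes, so the triad is excisive.
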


When $s=0$ or $s=1$ the Poincar{\'e} polynomial simplifies to the known expression for the Poincar{\'e} polynomial of $\Hom(\Z^r,U(2))$, respectively $\Hom(\Z^{r+1},U(2))$ (see \cite[Example 6.2]{RS19}).

\begin{example} Here are some examples for $s>1$:
\begin{alignat*}{1}
p(2,1;t)&=1+3t+5t^2+10t^3+15t^4+12t^5+7t^6+6t^7+4t^8+t^9\\
p(2,2;t)&=1 + 4 t + 11 t^2 + 28 t^3 + 48 t^4 + 52 t^5 + 44 t^6 + 36 t^7 + 23 t^8 + 8 t^9 + t^{10} \\
p(3,1;t)&=1 + 4 t + 9 t^2 + 20 t^3 + 36 t^4 + 43 t^5 + 40 t^6 + 38 t^7 + 31 t^8 + 16 t^9 + 7 t^{10} \\&\quad + 6 t^{11} + 4 t^{12} + t^{13}\\
p(3,2;t)&=1 + 5 t + 17 t^2 + 50 t^3 + 105 t^4 + 155 t^5 + 183 t^6 + 188 t^7 + 155 t^8 + 91 t^9 + 39 t^{10}\\&\quad  + 18 t^{11} + 11 t^{12} + 5 t^{13} + t^{14}.
\end{alignat*}
\end{example}

\begin{remark}
Note that $F_2\times \Z\cong P_3$ is the pure braid group on three strands. The Poincar{\'e} polynomial of $\Hom(F_2\times \Z,SU(2))$ was computed by Adem and Cohen using geometric arguments in \cite[Theorem 4.12]{AC} (see the erratum \cite{ACerratum}). It can also be deduced from a stable splitting due to Crabb \cite{Crabb}. However, neither approach applies to the $U(2)$-representation variety, because both make use of the particularly simple structure of centralisers in $SU(2)$. 
\end{remark}

To prove the theorem we must compute a small portion of the $r$-fold iterated Hochschild homology of $H_\ast(\Kdef(F_s);k)$. In the course of the proof we will see (Proposition \ref{prop:complexoffree}) that the Hochschild complex which computes the equivariant homology of $\Hom(F_s\times \Z^r,U(n))$ is always a complex of free $H_\ast(BU(n);k)$-comodules. It would be interesting to see if $\Hom(F_s\times \Z^r,U(n))$ is $U(n)$-equivariantly formal for all $n$.

We recall the necessary definitions.

Let $k$ be a field of characteristic zero and $(A,\partial)$ a differential graded commutative $k$-algebra (henceforth called just \emph{dgca}). We shall always assume that our (differential) algebras are concentrated in non-negative degrees.

\begin{definition} \label{def:hochschildcomplex}
The \emph{Hochschild complex} of $(A,\partial)$, denoted $(C(A),d)$, is defined as the total complex $(\mathrm{Tot}^{\oplus}(B(A)),d=d_h+d_v)$ of the bicomplex $(B(A),d_h,d_v)$ given by
\[
B_{p,q}(A)=(A^{\otimes (p+1)})_q
\]
with horizontal differential $d_h\co B_{p,q}(A)\to B_{p-1,q}(A)$ given by
\begin{equation*}
\begin{split}
d_h(a_0\otimes \cdots \otimes a_n) &= \sum_{i=0}^{n-1} (-1)^i a_0\otimes \cdots \otimes a_i a_{i+1} \otimes \cdots \otimes a_n \\&\quad + (-1)^{n+(|a_0|+\cdots + |a_{n-1}|)|a_n|} a_n a_0 \otimes a_1 \otimes \cdots \otimes a_{n-1} 
\end{split}
\end{equation*}
and vertical differential $d_v\co B_{p,q}(A)\to B_{p,q-1}(A)$ given by
\[
d_v(a_0\otimes \cdots \otimes a_n) = (-1)^{p+q} \sum_{i=0}^n (-1)^{|a_0|+\cdots + |a_{i-1}|}a_0\otimes \cdots \otimes \partial a_i \otimes \cdots \otimes a_n\, .
\]
The $n$-th homology of $(C(A),d)$ is denoted $\HH_n(A)$.
\end{definition}
For example, if $A$ has trivial differential $\partial=0$, then $d\co C_1(A)\to C_0(A)$ is a graded commutator $a\otimes b \mapsto ab-(-1)^{|a||b|}ba$, and hence zero. In this case, $\HH_0(A)\cong A$.

There is a multiplicative structure on the Hochschild complex called the shuffle product which makes $C(A)$ a dgca. To define it we let the symmetric group $\Sigma_p$ act (in the graded sense, i.e., with appropriate signs) on $B_{p,\ast}(A)=A\otimes A^{\otimes p}$ by permutation of the last $p$ tensor factors.

\begin{definition} \label{def:shuffleproduct}
The \emph{shuffle product} $B_{p,q}(A)\otimes B_{s,t}(A)\to B_{p+s,q+t}(A)$ is defined by
\begin{equation*}
\begin{split}
&(a_0\otimes \cdots \otimes a_p)\cdot (a_{0}'\otimes \cdots \otimes a_{s}')\\&\quad =\sum_{(p,s)\textnormal{-shuffle } \sigma} (-1)^{(|a_1|+\cdots +|a_p|)|a_{0}'|} \textnormal{sign}(\sigma) \sigma\cdot (a_0a_{0}'\otimes a_1 \otimes \cdots \otimes a_p\otimes a_{1}'\otimes \cdots \otimes a_{s}')\, .
\end{split}
\end{equation*}
\end{definition}

The differential of the Hochschild complex satisfies the Leibnitz rule with respect to the shuffle product, and so the shuffle product makes $\HH_\ast(A)$ into a graded commutative $k$-algebra, cf. \cite[4.2.1]{Loday}.

It will be convenient to use a smaller variant of the Hochschild complex.
\begin{definition} \label{def:normalised}
The \emph{normalised} Hochschild complex is the quotient complex $\bar{C}(A)$ of $C(A)$ with respect to the subcomplex $D(A)\subseteq C(A)$ spanned by those tensors $a_0\otimes a_1\otimes \cdots \otimes a_n$ for which $a_i=1$ for at least one $i\in \{1,\dots,n\}$.
\end{definition}

The subcomplex $D(A)$ is the total complex of a bicomplex, whose horizontal homology is zero (cf. \cite[1.1.15]{Loday}). Hence, $D(A)$ is acyclic and the quotient map $C(A)\to \bar{C}(A)$ is a quasi-isomorphism. Moreover, the shuffle product descends to $\bar{C}(A)$ and therefore $C(A)\to \bar{C}(A)$ is a quasi-isomorphism of dgcas.

In analogy with Definition \ref{def:chargedeinfinity} we introduce the following terminology.
\begin{definition} \label{def:chargedalgebra}
A dgca $(A,\partial)$ together with a direct sum decomposition of complexes $A=\bigoplus_{n\geq 0} A_n$ such that the product takes $A_k \otimes A_l$ to $A_{k+l}$ will be called a \emph{charged} algebra. The elements of $A_n$ are said to have \emph{charge} $n$.
\end{definition}

For example, the homology of $\Vect_{\C}^\simeq$, or $\Kdef(G)$, is a charged algebra (with trivial differential). The charge corresponds to the rank of a vector space or $G$-representation.

\begin{example}
The grading by charge on $H_\ast(\Kdef(F_s);k)$ is easy to describe. Each generator $\xi_{I,n}$ of Theorem \ref{thm:quotientoffreealgebra} has charge one, because it comes from the homology of $\Map(\vee^s S^1,BU(1))$.
\end{example}

If $A$ and $B$ are charged algebras, then so is $A\otimes B$ in a natural way. It follows that if $A$ is a charged algebra, then so are the Hochschild complex $C(A)$, the normalised Hochschild complex $\bar{C}(A)$, and the Hochschild homology $\HH_\ast(A)$.

\begin{notation}
For a charged algebra $A$, we denote by $A_{\geq n}$ the ideal in $A$ generated by all elements of charge $\geq n$. We denote the $n$-truncation $A/A_{\geq n+1}$ by $A_{\leq n}$.
\end{notation}

We observe that if $A$ is a charged algebra, then the quotient map $A\to A_{\leq n}$ induces isomorphisms $C(A)_{\leq n}\cong C(A_{\leq n})_{\leq n}$ and $\HH_\ast(A)_{\leq n}\cong \HH_\ast(A_{\leq n})_{\leq n}$.

\begin{definition} \label{def:iterated}
Let $A$ be a dgca and $r\geq 1$ an integer. The \emph{$r$-fold iterated Hochschild homology} of $A$, denoted $\HH_\ast^{(S^1)^r}(A)$, is defined as the homology of the iterated Hochschild complex $C^{(r)}(A):=C(C(\cdots C(A)\cdots))$ ($r$ times).
\end{definition}

This definition is consistent with that of Section \ref{sec:hh} in that $H_\ast(C^{(r)}(A))\cong \pi_\ast(A\otimes (S^1)^r)$.

It follows inductively that for a charged algebra $A$ and any $n\geq 0$ the projection $A\to A_{\leq n}$ induces an isomorphism $\HH_\ast^{(S^1)^r}(A)_{\leq n}\cong \HH_\ast^{(S^1)^r}(A_{\leq n})_{\leq n}$.

\begin{notation}
Let $V$ be a graded $k$-vector space.
\begin{enumerate}
\item[(i)] We denote by $\Omega V$ the shift of $V$ with $(\Omega V)_{n}=V_{n+1}$. Suppose that $p(X)=a_n X^n + \cdots + a_1 X + a_0$ is a polynomial with non-negative integer coefficients. Then we write $p(\Omega) V$ for $\bigoplus_{i=0}^n (\Omega^{i} V)^{\oplus a_i}$. For example, $(1+\Omega) V$ stands for $V\oplus \Omega V$.
\item[(ii)] We denote by $N\co V\otimes V\to V\otimes V$ the symmetrisation map
\[
N(a\otimes b)=a\otimes b+(-1)^{|a||b|} b\otimes a
\]
and by $[-,-]\co V\otimes V\to V\otimes V$ the graded commutator map
\[
[a,b]=a\otimes b-(-1)^{|a||b|} b\otimes a\, .
\]
\item[(iii)] We denote by $\Alt^2(V)\subseteq V\otimes V$ the subspace of alternating tensors, defined either as the kernel of $N$ or the image of $[-,-]$.
\end{enumerate}
\end{notation}

The next lemma is a formality result which will allow us to compute rather easily the iterated Hochschild homology in charges $\leq 2$.

\begin{lemma} \label{lem:hhcharged}
Let $A=\bigoplus_{n\geq 0} A_{n}$ be a charged $k$-algebra with trivial differential and so that $A_0=k$. Then, there is a quasi-isomorphism of dgcas
\[
C(A)_{\leq 2} \xrightarrow{\sim} \HH_\ast(A)_{\leq 2}\, .
\]
Moreover, as graded $k$-vector spaces
\[
\HH_\ast(A)_i \cong \begin{cases} k & \textnormal{if }i=0, \\
 (1+\Omega)A_1 & \textnormal{if }i=1, \\
 (1+\Omega) A_2 \oplus \Omega(1+\Omega) \Alt^2(A_1) & \textnormal{if }i=2\, .
 \end{cases}
 \]
\end{lemma}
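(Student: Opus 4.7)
The plan is to compute both sides directly using the normalized Hochschild complex $\bar{C}$ (Definition \ref{def:normalised}), which is quasi-isomorphic to $C$ as a dgca. Because $C(A)_{\leq n} \cong C(A_{\leq n})_{\leq n}$, I may assume $A = A_{\leq 2}$; and because $A_0 = k$, every normalized tensor $a_0 \otimes a_1 \otimes \cdots \otimes a_p$ has each $a_i$ of charge at least $1$ for $i \geq 1$. In charge $\leq 2$ this forces $p \leq 2$, and the only pieces are
\[
\bar{C}_0 = A_{\leq 2},\quad \bar{C}_1|_{\leq 2} = (1 \otimes A_1) \oplus (1 \otimes A_2) \oplus (A_1 \otimes A_1),\quad \bar{C}_2|_2 = 1 \otimes A_1 \otimes A_1,
\]
with all higher $p$ vanishing in this range.

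Next I compute the differentials. Since $A$ has trivial differential, $d_v = 0$. Graded commutativity collapses $d_h$ on tensor degree $1$: a direct application of the formula gives $d_h(a_0 \otimes a_1) = a_0 a_1 + (-1)^{1+|a_0||a_1|} a_1 a_0 = 0$. The only nonzero differential in range is $d_h(1 \otimes a \otimes b) = N(a \otimes b) - 1 \otimes ab$ for $a, b \in A_1$. Reading off the homology: in charge $1$ nothing is cobounded or killed, giving $\HH_\ast(A)_1 \cong A_1 \oplus (1 \otimes A_1) \cong (1+\Omega)A_1$. In charge $2$ the image of $d_h$ sits as the graph of $(N,-\mu)$ inside $(A_1 \otimes A_1) \oplus (1 \otimes A_2)$, so in $\HH_1$ the symmetric summand $\Sym^2(A_1) \subseteq A_1 \otimes A_1$ is identified with the subspace $1 \otimes (A_1 \cdot A_1) \subseteq 1 \otimes A_2$, and $(1 \otimes A_2) \oplus \Alt^2(A_1)$ survives. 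The kernel of $d_h$ on $\bar{C}_2|_2$ is $1 \otimes \Alt^2(A_1)$: indeed, if $\sum N(a_i \otimes b_i) = 0$ then $\sum a_i \otimes b_i \in \Alt^2(A_1)$, and applying multiplication together with graded commutativity then forces $2 \sum a_i b_i = 0$, hence $\sum a_i b_i = 0$ in characteristic $0$. Together with $\HH_0(A)_2 = A_2$ this produces the asserted decomposition.

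To upgrade this vector-space isomorphism to a dgca quasi-isomorphism, I would exhibit a section $\sigma: \HH_\ast(A)_{\leq 2} \to \bar{C}(A)_{\leq 2}$ sending each summand to its natural cycle representative (elements $a \in A_n$ to themselves, the shifted copies to $1 \otimes a$, and $[a,b] \in \Alt^2(A_1)$ to $a \otimes b - (-1)^{|a||b|} b \otimes a$ and to $1 \otimes [a,b]$ in the two charge-$2$ positions). The positive input is the shuffle identity $(1 \otimes a) \cdot (1 \otimes b) = 1 \otimes a \otimes b - (-1)^{|a||b|} 1 \otimes b \otimes a = 1 \otimes [a,b]$, which already lies in $\sigma(\Omega(1+\Omega)\Alt^2(A_1))$. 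The main obstacle is the mixed shuffle $a \cdot (1 \otimes b) = a \otimes b$, which has a nonzero symmetric component and is therefore not in the image of $\sigma$; however, modulo $d_h(1 \otimes a \otimes b)$ one has $N(a \otimes b) \equiv 1 \otimes ab$, so $a \otimes b$ is cohomologous to $\tfrac{1}{2}[a,b] + 1 \otimes \tfrac{1}{2}(ab)$, which does lie in $\sigma(\HH)$. Checking this identification on all pairs of generators—equivalently, identifying the shuffle product on cycles with the multiplication induced by the direct-sum formula on $\HH_\ast(A)_{\leq 2}$—gives the required dgca quasi-isomorphism.
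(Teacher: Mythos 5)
Your computation of the normalised Hochschild complex in charges $\leq 2$, the differentials, and the resulting graded vector space $\HH_\ast(A)_{\leq 2}$ is correct and follows the same route as the paper's proof. The identification $\ker(d)=\Alt^2(A_1)$ via graded commutativity, and the cokernel computation, both check out.

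The gap is in the dgca part. You attempt to build a \emph{section} $\sigma\co \HH_\ast(A)_{\leq 2}\to \bar{C}(A)_{\leq 2}$ and make it multiplicative, but as you yourself observe, the mixed shuffle $a\cdot(1\otimes b)=a\otimes b$ lands outside $\sigma(\HH_\ast(A)_{\leq 2})$. Your proposed remedy---that $a\otimes b$ is \emph{cohomologous} to $\tfrac{1}{2}[a,b]+1\otimes\tfrac{1}{2}ab$---does not help: a map of dgcas must preserve products on the nose at the chain level, not up to boundary. In fact there is a genuine obstruction to any multiplicative section. Take $b=a$ with $|a|$ odd. Then $a\cdot(1\otimes a)=a\otimes a$ is a \emph{boundary} (indeed $d(\tfrac{1}{2}\,1\otimes a\otimes a)=(0,a\otimes a)$ since $a^2=0$), so the class of $a\otimes a$ in $\HH_1(A)_2$ vanishes. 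Any multiplicative section $\sigma$ would then have to satisfy $\sigma(\mathrm{cl}(a))\cdot\sigma(\mathrm{cl}(1\otimes a))=\sigma(\mathrm{cl}(a\otimes a))=\sigma(0)=0$, whereas $\sigma(\mathrm{cl}(a))\cdot\sigma(\mathrm{cl}(1\otimes a))=a\cdot(1\otimes a)=a\otimes a\neq 0$. So no multiplicative section exists, and the proof cannot be completed along these lines.

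The paper instead constructs the quasi-isomorphism in the \emph{other} direction, as a projection $\phi\co\bar{C}(A)_{\leq 2}\to \HH_\ast(A)_{\leq 2}$: the canonical quotient map on tensor degrees $0,1$ and a retraction of $A_1\otimes A_1$ onto $\ker(d)=\Alt^2(A_1)$ in tensor degree $2$. Multiplicativity is then nearly automatic: the algebra structure on $\HH_\ast(A)$ is by definition the shuffle product descended to classes, and since everything in $\bar{C}(A)$ of charge $\leq 1$ is a cycle, $\phi(xy)$ and $\phi(x)\phi(y)$ are both the class of $xy$. You should reverse the direction of your map; with that change your computations carry over.

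As a side remark, the retraction in tensor degree $2$ should be the projector $\tfrac{1}{2}[-,-]$ onto $\Alt^2(A_1)$ (which restricts to the identity on cycles), rather than $[-,-]$ itself (which restricts to multiplication by $2$); the paper writes $[-,-]$ but the normalisation only affects the chosen isomorphism, not the conclusion.
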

\begin{proof}
The normalised Hochschild complex $\bar{C}(A)_{\leq 2}$ (see Definition \ref{def:normalised}) reads

\[ \xymatrixrowsep{0mm}
\xymatrix{
& & k \\
& & \oplus \\
& A_1 \ar[r]^-{0} & A_1 \\
& \oplus & \oplus \\
A_1\otimes A_1 \ar[r]^-{d} & A_2 \oplus (A_1\otimes A_1) \ar[r]^-{0} & A_2 
}
\]
with $d(a\otimes b)=(-ab, N(a\otimes b))$. The three rows correspond to charge zero, one and two.

We have that $\ker(d)=\Alt^2(A_1)$. Moreover, the projection
\begin{alignat*}{1}
\pi\co A_2\oplus (A_1\otimes A_1)& \to A_2 \otimes \Alt^2(A_1) \\
(x,a\otimes b) & \mapsto (x+\frac{1}{2}ab, [a,b])
\end{alignat*}
induces an isomorphism $\mathrm{coker}(d)\cong A_2\oplus \Alt^2(A_1)$. This proves the computation of $\HH_\ast(A)_i$.

There is a quasi-isomorphism $\bar{C}(A)_{\leq 2}\to \HH_\ast(A)_{\leq 2}$ which is the canonical projection everywhere except for the domain of $d$, where we can take it to be $[-,-]\co A_1\otimes A_1\to \Alt^2(A_1)=\ker(d)$. The only non-trivial products are those between $\bar{C}(A)_0=k$ and $\bar{C}(A)_2$, and those between $\bar{C}(A)_{\leq 1}$ and itself. The quasi-isomorphism is multiplicative on the former, because it is $k$-linear, and it is multiplicative on the latter, because it is the identity in charges $\leq 1$.
\end{proof}

Note that as a $k$-module $\HH_\ast(A)_{\leq 2}$ does not depend on the multiplicative structure of $A$ (though as an algebra $\HH_\ast(A)_{\leq 2}$ does depend on it).

\begin{corollary} \label{cor:hhchargedalgebra}
Let $A$ be a charged $k$-algebra with $A_0=k$. Then, for any $r\geq 0$
\[
\HH^{(S^1)^r}_\ast(A)_{i}  \cong \begin{cases} A_0 & \textnormal{if }i=0, \\  (1+\Omega)^r A_1 & \textnormal{if }i=1, \\ (1+\Omega)^r A_2\oplus \bigoplus_{k=1}^r \Omega(1+\Omega)^k \Alt^2((1+\Omega)^{r-k}A_1) & \textnormal{if } i=2\, . \end{cases} 
\]
\end{corollary}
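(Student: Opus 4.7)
The plan is to induct on $r$, with the base cases $r=0$ (trivial) and $r=1$ (Lemma \ref{lem:hhcharged}) in hand. The essential content of Lemma \ref{lem:hhcharged} is not merely the module-level formula in charges $\leq 2$, but the explicit quasi-isomorphism of dgcas $C(A)_{\leq 2}\xrightarrow{\sim}\HH_\ast(A)_{\leq 2}$ where the target has trivial differential; this formality statement is what will propagate through the iteration. Writing $B=\HH^{(S^1)^{r-1}}_\ast(A)$, the inductive hypothesis gives $B_0=k$ (so Lemma \ref{lem:hhcharged} is applicable to $B$), and the goal is to establish
\begin{equation*}
\HH^{(S^1)^r}_\ast(A)_{\leq 2}\;\cong\;\HH_\ast(B)_{\leq 2}
\end{equation*}
so that the whole computation reduces to a single application of Lemma \ref{lem:hhcharged} to the trivially-differentiated algebra $B$.

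To establish this reduction, I would first strengthen the inductive hypothesis to a dgca quasi-isomorphism $C^{(r-1)}(A)_{\leq 2}\simeq B_{\leq 2}$, and then apply the functor $C$. Because $k$ is a field, the tensor product is exact, so for each fixed $p$ the map of vertical complexes $B_{p,\ast}(C^{(r-1)}(A)_{\leq 2})\to B_{p,\ast}(B_{\leq 2})$ is a quasi-isomorphism. The bicomplex sits in the first quadrant, so the associated spectral sequence converges and a levelwise quasi-isomorphism induces one on total complexes. Thus $C^{(r)}(A)_{\leq 2}\simeq C(B)_{\leq 2}$, and a second application of Lemma \ref{lem:hhcharged}---this time to the charged algebra $B$ itself---gives $C(B)_{\leq 2}\simeq\HH_\ast(B)_{\leq 2}$. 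Passing to homology yields the displayed isomorphism, and simultaneously upgrades the inductive hypothesis for the next step.

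The remaining work is index bookkeeping. By the $r=1$ formula, $\HH_\ast(B)_1=(1+\Omega)B_1=(1+\Omega)^r A_1$, giving the charge-$1$ assertion. For charge $2$,
\begin{equation*}
\HH_\ast(B)_2=(1+\Omega)B_2\oplus\Omega(1+\Omega)\Alt^2(B_1).
\end{equation*}
Substituting $B_2=(1+\Omega)^{r-1}A_2\oplus\bigoplus_{k=1}^{r-1}\Omega(1+\Omega)^k\Alt^2((1+\Omega)^{r-1-k}A_1)$ produces $(1+\Omega)^r A_2$ together with, after the reindexing $k\mapsto k+1$, the summands indexed by $k=2,\dots,r$ in the target. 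The term $\Omega(1+\Omega)\Alt^2(B_1)=\Omega(1+\Omega)\Alt^2((1+\Omega)^{r-1}A_1)$ supplies exactly the missing $k=1$ summand, so the decomposition for $r$ emerges on the nose.

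The main obstacle is the formality propagation in the inductive step: one must be sure that the Lemma \ref{lem:hhcharged} quasi-isomorphism is robust enough to be fed through $C$ again. Once this is pinned down by the first-quadrant spectral sequence argument over the field $k$, the remainder of the proof is purely formal manipulation of Lemma \ref{lem:hhcharged}; no new computation of cycles, boundaries, or shuffle products beyond those already carried out for the $r=1$ case is required.
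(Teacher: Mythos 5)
Your proof is correct. The paper also argues by induction from Lemma \ref{lem:hhcharged}, but it iterates in the opposite direction: rather than peeling off the outer $C$, it peels off the inner one, writing
$\HH_\ast^{(S^1)^R}(A)_{\leq 2}=\HH_\ast^{(S^1)^{R-1}}(C(A))_{\leq 2}\cong\HH_\ast^{(S^1)^{R-1}}(\HH_\ast(A)_{\leq 2})_{\leq 2}$
and then applying the inductive hypothesis for $R-1$ to the charged algebra $\HH_\ast(A)_{\leq 2}$ (which has trivial differential and $k$ in charge zero). That keeps the inductive hypothesis a pure module-level statement, whereas your outside-in version needs a strengthened inductive hypothesis asserting the dgca quasi-isomorphism $C^{(r-1)}(A)_{\leq 2}\simeq \HH_\ast^{(S^1)^{r-1}}(A)_{\leq 2}$, which you correctly propagate using exactness of $\otimes_k$ over a field and convergence of the first-quadrant bicomplex spectral sequence. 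Both proofs ultimately rest on the same two ingredients — the formality quasi-isomorphism of Lemma \ref{lem:hhcharged} and quasi-isomorphism invariance of the Hochschild complex over a field (the paper leaves the latter implicit) — and the reindexing in charge two is the same (your $k\mapsto k+1$ matches the paper's absorbing the extra $\Omega(1+\Omega)^R\Alt^2(A_1)$ summand as the $k=R$ term). So the arguments are essentially equivalent; the paper's direction is marginally more economical because no formality statement needs to be carried through the induction.
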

\begin{proof}
This follows by induction over $r\geq 0$. The base case $r=0$ is clear. Now suppose that the statement is true for all $r< R$ for some integer $R\geq 1$. We have that
\begin{equation*}
\begin{split}
\HH_\ast^{(S^1)^R}(A)_{\leq 2}& = \HH_\ast^{(S^1)^{R-1}}(C(A))_{\leq 2} \\& \cong \HH_\ast^{(S^1)^{R-1}}(C(A)_{\leq 2})_{\leq 2}  \\& \cong \HH_\ast^{(S^1)^{R-1}}(\HH_\ast(A)_{\leq 2})_{\leq 2} \,,
\end{split}
\end{equation*}
where the last isomorphism follows from Lemma \ref{lem:hhcharged}. Now we use the computation from Lemma \ref{lem:hhcharged} and the induction hypothesis to complete the induction step.
\end{proof}

We now turn to the particular case of $A=H_\ast(\Kdef(F_s);k)$.

\begin{proposition} \label{prop:complexoffree}
For every $n\geq 0$, the chain complex $C^{(r)}(A)_n$ (see Definition \ref{def:iterated}) computing $\HH_\ast^{(S^1)^r}(A)_n$ is a chain complex of free $H_\ast(BU(n);k)$-comodules.
\end{proposition}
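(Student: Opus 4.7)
\emph{Proof plan.} The strategy is to realise each term of $C^{(r)}(A)_n$ topologically and then apply Leray--Hirsch in two stages. By Proposition \ref{prop:key} we have $\Kdef(F_s \times \Z^r) \simeq \Kdef(F_s) \otimes (S^1)^r$ as $\mathbb{E}_\infty$-algebras in $\N$-graded spaces, and the right-hand side is the geometric realisation of a multi-simplicial space $[\vec{p}]\mapsto \Kdef(F_s)(X_{\vec{p}})$ obtained from a based simplicial model $X$ of $((S^1)^r)_+$. Because $\Kdef(F_s)$ is a special $\N^\otimes$-space, the charge-$n$ piece at level $\vec{p}$ decomposes as
\begin{equation*}
\Kdef(F_s)(X_{\vec{p}})_n \;\cong\; \bigsqcup_{\vec{n}}\, \prod_{j=1}^{N} \Hom(F_s, U(n_j))_{hU(n_j)}\, ,
\end{equation*}
summed over tuples $\vec{n}=(n_1,\dots,n_N)$ with $\sum_j n_j=n$ (where $N+1$ is the cardinality of $X_{\vec{p}}$). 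The map classifying the universal $U(n)$-bundle on each summand is $\mu \circ \prod_j ev_1$, the composite of the basepoint evaluations $ev_1\co \Hom(F_s, U(n_j))_{hU(n_j)} \to BU(n_j)$ with the direct sum map $\mu\co \prod_j BU(n_j)\to BU(n)$. Passing to homology and sorting by charge identifies each term of $C^{(r)}(A)_n$ with a finite direct sum of tensor products $A_{n_1}\otimes\cdots\otimes A_{n_N}$, with $H_\ast(BU(n);k)$-comodule structure induced by $\mu \circ \prod_j ev_1$.

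It thus suffices to show that a single such tensor product is a free $H_\ast(BU(n);k)$-comodule. Since everything has finite type in each degree, this is equivalent, after dualising, to showing that $\bigotimes_{j=1}^N H^\ast(\Map(\vee^s S^1, BU(n_j));k)$ is a free $H^\ast(BU(n);k)$-module via $(\mu\circ\prod ev_1)^\ast$. I would factor this through $\bigotimes_j H^\ast(BU(n_j);k)$. For the map out of $H^\ast(BU(n);k)$, $\mu^\ast$ is the inclusion of symmetric functions into block-symmetric functions in a partition of $n$ Chern roots; this extension is free of rank $\binom{n}{n_1,\dots,n_N}$, either by classical invariant theory or by Leray--Hirsch applied to the flag bundle $U(n)/\prod_j U(n_j)\to\prod_j BU(n_j)\to BU(n)$, whose fibre has free cohomology concentrated in even degrees. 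For the remaining step, Lemma \ref{lem:LBUn} combined with K\"unneth makes each $H^\ast(\Map(\vee^s S^1,BU(n_j));k)$ free over $H^\ast(BU(n_j);k)$ and hence the tensor product free over $\bigotimes_j H^\ast(BU(n_j);k)$. Composing two free extensions yields a free extension, and dualising gives the claim.

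The main obstacle I anticipate is the identification in the first paragraph: matching the algebraic iterated Hochschild complex of Definition \ref{def:iterated}, equipped with its $H_\ast(BU(n);k)$-comodule structure, against the topological Loday-type description above. The identification is essentially standard, but one must verify carefully that the comodule structure on each summand $A_{n_1}\otimes\cdots\otimes A_{n_N}$ is indeed the one induced by $\mu$, rather than some subtly different composition, in order for the Leray--Hirsch argument to apply.
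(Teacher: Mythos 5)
Your proof is correct in substance and rests on the same two ingredients as the paper's: the Leray--Hirsch theorem applied to the flag bundle $U(n)/\prod_j U(n_j)\to \prod_j BU(n_j)\to BU(n)$ (giving that $\bigotimes_j H_\ast(BU(n_j);k)$ is $H_\ast(BU(n);k)$-free), and Lemma \ref{lem:LBUn} (giving that $A_{n_j}$ is $H_\ast(BU(n_j);k)$-free). The paper combines these exactly as you do, just staying in homology rather than passing through cohomology and dualising.

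Where you diverge, and where the ``main obstacle'' you flag is self-imposed, is the first paragraph. The chain complex $C^{(r)}(A)_n$ in the Proposition is the purely algebraic iterated Hochschild complex of Definition \ref{def:iterated} applied to the charged $k$-algebra $A = H_\ast(\Kdef(F_s);k)$. Its terms are, by definition, direct sums of tensor products $A_{n_1}\otimes\cdots\otimes A_{n_N}$ with $\sum n_j = n$, and each such tensor product carries a $B_n$-comodule structure $A_{n_1}\otimes\cdots\otimes A_{n_N}\to B_{n_1}\otimes\cdots\otimes B_{n_N}\to B_n$ coming from the structure maps $A_i\to B_i$ and the Hopf multiplication on $B = H_\ast(\Kdef(1);k)$ --- this is what ``induced by $\mu$'' amounts to algebraically. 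The Hochschild differential is built from the multiplication maps $A_i\otimes A_j\to A_{i+j}$, which are $B_{i+j}$-comodule maps because the maps $\Hom(F_s,U(i))_{hU(i)}\to BU(i)$ assemble into a map of $\Gamma$-spaces $\Kdef(F_s)\to \Kdef(1)$. So there is no topological realisation of the complex to match up and no delicate identification to verify; the realisation of $\Kdef(F_s)\otimes (S^1)^r$ as a multi-simplicial space is simply not needed here, and dropping it removes the only gap in your argument.
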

\begin{proof}
For $i\geq 0$, let $B_i$ denote the coalgebra $H_\ast(BU(i);k)$. For $i,j \geq 0$, we may view $B_i\otimes B_j$ as a $B_{i+j}$-comodule via the natural map $BU(i)\times BU(j) \to BU(i+j)$. It follows from the Leray-Hirsch theorem applied to the homotopy fibre sequence
\[
U(i+j)/U(i)\times U(j) \to BU(i)\times BU(j) \to BU(i+j)
\]
that $B_{i}\otimes B_j$ is a free $B_{i+j}$-comodule. By Lemma \ref{lem:LBUn}, $A_i=H_\ast^{U(i)}(\Hom(F_s,U(i));k)$ is a free $B_i$-comodule. Together it follows that $A_i\otimes A_j$ is a free $B_{i+j}$-comodule and thus, inductively, $A_{i_1}\otimes \cdots \otimes A_{i_l}$ is a free $B_n$-comodule for every partition $i_1+\dots + i_l=n$.

In every degree the chain complex $C^{(r)}(A)_n$ is a direct sum of terms of the form $A_{i_1}\otimes \cdots \otimes A_{i_l}$ with $i_1+\cdots + i_l=n$. The differential is induced by multiplication maps $A_{i}\otimes A_j\to A_{i+j}$ which are maps of $B_{i+j}$-comodules. It follows that this is indeed a chain complex of free $B_n$-comodules.
\end{proof}

We continue to write $B_i=H_\ast(BU(i);k)$. Let $W=\bigoplus_{n\geq 0} W_n$ be a graded $k$-module of finite type.

\begin{notation}
We denote by $p_W(t)=\sum_{n\geq 0} \dim(W_n) t^n$ the Poincar{\'e} series of $W$. If $V=W\otimes B_i$ is a free $B_i$-comodule, we denote by $p_V(t):=p_W(t)$ the Poincar{\'e} series of $V$ as a $B_i$-comodule.
\end{notation}

Since $2$ is invertible in $k$, there is an isomorphism
\begin{equation}\label{eq:splittingsigma2}
W\otimes W\cong (W\otimes W)^{\Sigma_2} \oplus \Alt^2(W)\,.
\end{equation}

\begin{lemma}\label{lem:poincarepolynomial}
The following identities hold:
\begin{enumerate}
\item[(1)] $p_{(W\otimes W)^{\Sigma_2}}(t)=\frac{1}{2}\left( p_W(t)^2+p_W(-t^2)\right)$
\item[(2)] $p_{\Alt^2(W)}(t)=\frac{1}{2}\left(p_W(t)^2-p_W(-t^2)\right)$.
\end{enumerate}
\end{lemma}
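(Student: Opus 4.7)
The plan is to use the standard character-theoretic formula for symmetric and alternating squares. Since $\mathrm{char}(k) = 0$, the graded swap $\sigma \colon W \otimes W \to W \otimes W$, $w_1 \otimes w_2 \mapsto (-1)^{|w_1||w_2|} w_2 \otimes w_1$, has projectors $\tfrac{1}{2}(1 \pm \sigma)$ onto the two summands $(W \otimes W)^{\Sigma_2}$ and $\Alt^2(W)$ of the decomposition (\ref{eq:splittingsigma2}). Taking Poincar\'e series in each graded piece reduces both identities to computing $p_{W \otimes W}(t)$ and the generating series of traces $\chi_\sigma(t) := \sum_n \mathrm{tr}\bigl(\sigma\vert_{(W \otimes W)_n}\bigr)\, t^n$, from which $p_{(W \otimes W)^{\Sigma_2}}(t) = \tfrac{1}{2}(p_{W\otimes W}(t) + \chi_\sigma(t))$ and $p_{\Alt^2(W)}(t) = \tfrac{1}{2}(p_{W\otimes W}(t) - \chi_\sigma(t))$.

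The first ingredient, $p_{W \otimes W}(t) = p_W(t)^2$, is immediate from the K\"unneth formula. For the trace, I would pick a homogeneous basis $\{e_i\}$ of $W$; on the associated basis $\{e_i \otimes e_j\}$ of $W \otimes W$, off-diagonal pairs $e_i \otimes e_j$, $e_j \otimes e_i$ (with $i \neq j$) contribute $0$ to the trace, while each diagonal element $e_i \otimes e_i$ contributes $(-1)^{|e_i|^2} = (-1)^{|e_i|}$ in total degree $2|e_i|$. Summing yields
\[
\chi_\sigma(t) \;=\; \sum_{i} (-1)^{|e_i|} t^{2|e_i|} \;=\; \sum_{n \geq 0} (-1)^n \dim(W_n)\, t^{2n} \;=\; p_W(-t^2).
\]

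Substituting these two inputs into $\tfrac{1}{2}(p_{W \otimes W}(t) \pm \chi_\sigma(t))$ gives both claimed formulas at once. There is no real obstacle; the only sign point worth pausing over is the identity $(-1)^{|e_i|^2} = (-1)^{|e_i|}$, which holds because $n^2 \equiv n \pmod 2$.
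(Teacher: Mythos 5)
Your proof is correct. The paper's argument picks a totally ordered homogeneous basis $\mathcal{B}$ of $W$ and exhibits an explicit basis of $(W\otimes W)^{\Sigma_2}$, namely $v\otimes w + w\otimes v$ for $v<w$ together with $v\otimes v$ for $v$ of even degree, then counts; you instead compute via the projectors $\tfrac{1}{2}(1\pm\sigma)$ and the trace of the graded swap (a Molien-type argument). The two routes are equivalent in substance — your observation that $\sigma(e_i\otimes e_i) = (-1)^{|e_i|^2} e_i \otimes e_i = (-1)^{|e_i|} e_i\otimes e_i$ is exactly the paper's observation that $v\otimes v$ is symmetric precisely when $|v|$ is even and otherwise alternating — but the trace formalism packages the sign bookkeeping a bit more systematically, and has the advantage of yielding both identities (1) and (2) simultaneously without invoking the splitting (\ref{eq:splittingsigma2}) to deduce one from the other.
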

\begin{proof}
Let $\mathcal{B}$ be a graded basis for $W$ and let $\leq$ be a total order on $\mathcal{B}$. The first identity follows from the fact that $(W\otimes W)^{\Sigma_2}$ has basis $v\otimes w+w\otimes v$ for all $v < w\in \mathcal{B}$ and $v\otimes v$ for $v\in \mathcal{B}$ with $v$ of even degree. The second identity follows from the first and (\ref{eq:splittingsigma2}).
\end{proof}

Consider the free $B_1$-comodule $V=W\otimes B_1$ and view $V\otimes V$ as a free $B_2$-comodule. Since $N\co V\otimes V\to V\otimes V$ is a map of $B_2$-comodules, its kernel $\Alt^2(V)\subseteq V\otimes V$ is a $B_2$-subcomodule.

\begin{lemma} \label{lem:alt2cofree}
The subcomodule $\Alt^2(V)\subseteq V\otimes V$ is free with Poincar{\'e} polynomial
\[
p_{\Alt^2(V)}(t)=\frac{1}{2}((1+t^2)p_W(t)^2-(1-t^2)p_W(-t^2))\, .
\]
\end{lemma}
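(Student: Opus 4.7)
The plan is to exploit the compatibility of the swap $\tau_{V,V}$ with the $B_2$-comodule structure on $V\otimes V$ and isolate $\Alt^2(V)$ as a $B_2$-subcomodule which is free by inspection. The key point is that, since the map $BU(1)\times BU(1)\to BU(2)$ factors through the symmetric product, the induced coaction $B_1\otimes B_1 \to B_2\otimes (B_1\otimes B_1)$ is $\Sigma_2$-invariant (with $\Sigma_2$ acting on $B_2$ trivially), so the swap on $B_1\otimes B_1$ is $B_2$-colinear; similarly, the coaction on $V\otimes V$ only touches the two $B_1$-factors.

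First I would analyse $B_1\otimes B_1$ as a $B_2$-comodule with its $\Sigma_2$-action. By the Leray--Hirsch theorem applied to the fibration $S^2\simeq U(2)/U(1)^2\to BU(1)^2\to BU(2)$ (and passage to duals), $B_1\otimes B_1$ is a free $B_2$-comodule, and a convenient $\Sigma_2$-equivariant decomposition is obtained from the cohomological basis $\{1,\,x_1-x_2\}$ of $H^*(BU(1)^2)$ over $H^*(BU(2))$. Dually this yields an isomorphism of $B_2[\Sigma_2]$-comodules
\[
B_1\otimes B_1 \;\cong\; B_2\otimes (C_+\oplus C_-),
\]
where $C_+=k$ sits in degree $0$ with trivial $\Sigma_2$-action, and $C_-=k$ sits in degree $2$ with the sign action.

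Next I would transport this to $V\otimes V=W\otimes B_1\otimes W\otimes B_1$. The graded symmetric monoidal structure gives an identification $V\otimes V\cong (W\otimes W)\otimes (B_1\otimes B_1)$ under which the graded swap $\tau_{V,V}$ becomes $\tau_{W,W}\otimes \tau_{B_1,B_1}$ (a short sign-computation that I would spell out in the final writeup). Combined with the decomposition above this yields an isomorphism of $B_2[\Sigma_2]$-comodules
\[
V\otimes V \;\cong\; \bigl((W\otimes W)\otimes C_+\oplus (W\otimes W)\otimes C_-\bigr)\otimes B_2,
\]
with $\Sigma_2$ acting on the first summand by $\tau_{W,W}$ and on the second by $-\tau_{W,W}$. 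Extracting the $(-1)$-eigenspace gives
\[
\Alt^2(V) \;\cong\; \bigl(\Alt^2(W)\otimes C_+ \,\oplus\, (W\otimes W)^{\Sigma_2}\otimes C_-\bigr)\otimes B_2,
\]
which displays $\Alt^2(V)$ as a free $B_2$-comodule.

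Finally, the Poincaré series follows from Lemma \ref{lem:poincarepolynomial} together with $p_{C_+}(t)=1$ and $p_{C_-}(t)=t^2$:
\[
p_{\Alt^2(V)}(t)=\tfrac{1}{2}\bigl(p_W(t)^2-p_W(-t^2)\bigr)+\tfrac{t^2}{2}\bigl(p_W(t)^2+p_W(-t^2)\bigr)=\tfrac{1}{2}\bigl((1+t^2)p_W(t)^2-(1-t^2)p_W(-t^2)\bigr).
\]
The only mildly delicate step is the sign bookkeeping showing $\tau_{V,V}=\tau_{W,W}\otimes \tau_{B_1,B_1}$ after rearranging tensor factors; everything else is essentially formal once the $\Sigma_2$-equivariant Leray--Hirsch decomposition of $B_1\otimes B_1$ is in hand.
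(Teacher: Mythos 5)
Your proof is correct and takes essentially the same approach as the paper's: decompose $V\otimes V\cong (W\otimes W)\otimes(B_1\otimes B_1)$ $\Sigma_2$-equivariantly, isolate the $(-1)$-eigenspace as $\Alt^2(W)\otimes(B_1\otimes B_1)^{\Sigma_2}\oplus (W\otimes W)^{\Sigma_2}\otimes\Alt^2(B_1)$, identify $(B_1\otimes B_1)^{\Sigma_2}\cong B_2$ and $\Alt^2(B_1)\cong\Sigma^2 B_2$ via the dual picture of (anti)symmetric polynomials (you use the basis $\{1,x_1-x_2\}$, the paper the generator $x-y$), and conclude with Lemma \ref{lem:poincarepolynomial}. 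The only cosmetic difference is that you package the rank-one pieces as $B_2\otimes C_\pm$ and phrase things in terms of the equivariant Leray--Hirsch decomposition; the underlying argument and the resulting Poincar\'e-series computation are identical.
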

\begin{proof}
As in (\ref{eq:splittingsigma2}) we have that
\[
\Alt^2(V) \cong (\Alt^2(W)\otimes (B_1\otimes B_1)^{\Sigma_2}) \oplus ((W\otimes W)^{\Sigma_2}\otimes \Alt^2(B_1))
\]
as $B_2$-comodules. The comodule $(B_1\otimes B_1)^{\Sigma_2}$ is isomorphic to $(B_1\otimes B_1)_{\Sigma_2}\cong B_2$. The comodule $\Alt^2(B_1)$ is free on a single generator in degree two. One way to see this is to note that the graded dual $\Alt^2(B_1)^\vee$ viewed as a module over $B_2^\vee$ is the space of alternating polynomials in $k[x,y]$ (with $|x|=|y|=2$) viewed as a module over the ring of symmetric polynomials $k[x,y]^{\Sigma_2}$. This is a free module generated by $x-y$.

It follows that
\[
p_{\Alt^2(V)}(t)=p_{\Alt^2(W)}(t)+t^2 p_{(W\otimes W)^{\Sigma_2}}(t)\, .
\]
The claim follows now from Lemma \ref{lem:poincarepolynomial}.
\end{proof}

\begin{proof}[Proof of Theorem \ref{thm:u2}]
Theorem \ref{thm:hh} shows that
\[
H_\ast^{U(2)}(\Hom(F_s\times \Z^r,U(2));k)\cong \HH^{(S^1)^r}(A)_2
\]
for $A=H_\ast(\Kdef(F_s);k)$. By Corollary \ref{cor:hhchargedalgebra} and Lemma \ref{lem:alt2cofree}, this is a free $H_\ast(BU(2);k)$-comodule. Its Poincar{\'e} polynomial (as a $H_\ast(BU(2);k)$-comodule) is computed as follows. Set $W=H_\ast(\Hom(F_s,U(1));k)$, so that $A_1=W\otimes B_1$. By Corollary \ref{cor:hhchargedalgebra}, 
\[
p(s,r;t) =(1+t)^r p_{A_2}(t)+\sum_{k=1}^r t(1+t)^k p_{\Alt^2((1+\Omega)^{r-k}W\otimes B_1)}(t) \, .
\]
Now $p_{A_2}(t)=(1+t)^s (1+t^3)^s$ and $p_{(1+\Omega)^{r-k}W}(t)=(1+t)^{r-k}(1+t)^s$. Finally, use Lemma \ref{lem:alt2cofree} and the identities $\sum_{k=1}^r t(1+t)^{2r-k}=((1+t)^r-1)(1+t)^r$ and $\sum_{k=1}^r t(1-t)^{r-k}=1-(1-t)^r$ to determine the sum.
\end{proof}

We finish the section with the Poincar{\'e} polynomial of $\Rep(F_2\times \Z^r,U(2))$.

\begin{theorem} \label{thm:poincarerep}
For every $r\geq 0$, the Poincar{\'e} polynomial of $\Rep(F_2\times \Z^r,U(2))$ is
\[
p(r;t)=(1+t)^{r+2}\left(1+\frac{(1+t)^2((1+t)^r-1)+(1-t)^2((1-t)^r-1)}{2}\right)\, .
\]
\end{theorem}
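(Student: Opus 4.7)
The plan is to apply the spectral sequence of Corollary \ref{cor:ss} (in the $\Rdef$ version) to $G=F_2$, $A=\Z^r$ and $n=2$,
\[
E^2 = \HH^{(S^1)^r}_\ast(H_\ast(\Rdef(F_2);k))_2 \Longrightarrow H_\ast(\Rep(F_2\times \Z^r, U(2));k),
\]
compute the $E^2$-page using Corollary \ref{cor:hhchargedalgebra}, and finally argue that it collapses. The inputs one needs are: the graded algebra $H_\ast(\Rdef(F_2);k)$ in charges $0$, $1$ and $2$, together with the fact that the first non-trivial differentials vanish in charge $2$.

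First I would identify $\Rep(F_2,U(2))$ up to homotopy. The determinant induces a $U(2)$-equivariant map $\det\times\det\co U(2)^2 \to U(1)^2=T^2$ with trivial action on the target, and hence a continuous map $\Rep(F_2,U(2))\to T^2$ whose fibre over every point is $SU(2)^2/SU(2)$ (since the centre of $U(2)$ acts trivially by conjugation and the conjugation action of $U(2)$ preserves determinants). The classical Fricke--Vogt identification $(A,B)\mapsto (\mathrm{tr}\,A,\mathrm{tr}\,B,\mathrm{tr}\,AB)$ shows that $SU(2)^2/SU(2)$ is homeomorphic to a closed $3$-ball and in particular contractible, so $\Rep(F_2,U(2))\simeq T^2$. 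Together with $\Rep(F_2,U(1))=T^2$, this gives $p_{A_1}(t)=p_{A_2}(t)=(1+t)^2$ for $A=H_\ast(\Rdef(F_2);k)$.

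Next I would compute the $E^2$-page. Corollary \ref{cor:hhchargedalgebra} gives
\[
\HH^{(S^1)^r}_\ast(A)_2 \cong (1+\Omega)^r A_2 \oplus \bigoplus_{k=1}^r \Omega(1+\Omega)^k \Alt^2((1+\Omega)^{r-k} A_1).
\]
Applying Lemma \ref{lem:poincarepolynomial} with $p_W(t)=(1+t)^{r-k+2}$, one obtains
\[
p_{\Alt^2((1+\Omega)^{r-k}A_1)}(t) = \tfrac{1}{2}\bigl((1+t)^{2(r-k+2)} - (1-t^2)^{r-k+2}\bigr),
\]
and the two geometric sums $\sum_{k=1}^r (1\pm t)^{r-k+2}= (1\pm t)^2\bigl((1\pm t)^r-1\bigr)/(\pm t)$ assemble the total contribution into exactly the closed form claimed in the theorem. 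This accounts for the full Poincar\'e polynomial of $E^2$ in charge $2$.

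The main obstacle is showing the spectral sequence degenerates at $E^2$. My plan is to exploit the fact that the determinant upgrades to a map of charged $\mathbb{E}_\infty$-algebras $\det\co \Rdef(F_2)\to \widehat{R}$, where $\widehat R$ is the charged $\mathbb{E}_\infty$-algebra with $\widehat R_0=\ast$, $\widehat R_n=T^2$ for $n\geq 1$ and multiplication given by the group operation on $T^2$. The discussion above shows this is an equivalence in charges $\le 2$, and since the charge-$2$ component of the tensor $(-) \otimes (S^1)^r$ only depends on the input in charges $\le 2$, it suffices to prove the analogous collapse for $\widehat R \otimes (S^1)^r$. For $\widehat R$, however, formality is essentially automatic: $T^2$ is a compact abelian Lie group so its $k$-chains are formal as an $\mathbb{E}_\infty$-$k$-coalgebra, and the charged multiplication is induced by a genuine abelian group structure; propagating this formality through the tensor product identifies the chain complex computing $H_\ast(\widehat R \otimes (S^1)^r;k)_2$ with the Hochschild complex, forcing $E^2=E^\infty$. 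This formality step is the technical heart of the argument; a pragmatic alternative is to compare with the $\Kdef$ spectral sequence of Theorem \ref{thm:u2} via Remark \ref{rem:naturality} and deduce collapse from the already-established formality of $k[\Kdef(F_2)]$ in Proposition \ref{prop:formalityintro}.
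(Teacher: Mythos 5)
Your computational backbone is exactly the paper's: you identify $\Rep(F_2,U(2))\simeq T^2$ (via Fricke--Vogt, where the paper instead invokes the covering $\Rep(F_2,SU(2))\times(S^1)^2\to\Rep(F_2,U(2))$ from Lawton and the identification $\Rep(F_2,SU(2))\cong\Delta^3$ from Florentino--Lawton), then feed $p_{A'_1}=p_{A'_2}=(1+t)^2$ into Corollary \ref{cor:hhchargedalgebra} and Lemma \ref{lem:poincarepolynomial}, and the geometric sums do assemble to the claimed closed form. All of that is fine and matches the paper.

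The gap is in the degeneration step. Your primary route asserts formality of $k[\widehat R]$ as ``essentially automatic,'' but this is doing real work: the object you need to be formal is the \emph{charged} $\mathbb{E}_\infty$-$k$-bialgebra $k[\widehat R]$, not just the $k$-chains of the torus $T^2$. Formality of a compact torus as a space (or even as an $\mathbb{E}_\infty$-$k$-coalgebra) does not by itself give formality of $k[\widehat R]$ with its Day-convolution multiplication $T^2\times T^2\to T^2$ across charges; you would have to run an argument in the style of Section \ref{sec:proofofformality} for $\widehat R$, and you have not. Your ``pragmatic alternative'' is the paper's actual proof, but as you have written it the key ingredient is missing: Remark \ref{rem:naturality} plus formality of $k[\Kdef(F_2)]$ only gives you that the \emph{image} of $\HH^{(S^1)^r}_\ast(A)_2\to\HH^{(S^1)^r}_\ast(A')_2$ consists of permanent cycles. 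To conclude $E^2=E^\infty$ for the $\Rdef$ spectral sequence you must show this comparison map is \emph{surjective}. The paper does exactly this: it proves the map $A=H_\ast(\Kdef(F_2);k)\to A'=H_\ast(\Rdef(F_2);k)$ is surjective in charges $\le 2$ (the charge-$2$ case being where the identification $\Rep(F_2,U(2))\simeq T^2$ and the section of the diagonal arrow are needed), and then notes that surjectivity of graded vector spaces propagates through $\Alt^2$ and through Corollary \ref{cor:hhchargedalgebra} to give surjectivity on $E^2$ in charge $2$. Add that surjectivity argument and your alternative route becomes a complete proof; without it, neither branch of your proposal actually establishes collapse.
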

\begin{proof}
Let $A=H_\ast(\Kdef(F_2);k)$ and $A'=H_\ast(\Rdef(F_2);k)$, both viewed as charged $k$-algebras. We first claim that the canonical map $A\to A'$ is surjective in charges $\leq 2$. This is clear in charge zero where both are $k$; in charge one the map is induced by the projection $\Hom(F_2,U(1))_{hU(1)}\to \Rep(F_2,U(1))$, thus surjective as well.

In charge two we use the fact that the covering $SU(2)\times S^1\to U(2)$ induces a covering
\[
\Rep(F_2,SU(2))\times (S^1)^2 \to \Rep(F_2,U(2))
\]
by \cite[Theorem A]{Lawton}. By \cite[Theorem 6.5]{Florentino}, $\Rep(F_2,SU(2))\cong \Delta^3$, hence contractible. It follows that the determinant map $\mathrm{det}\co U(2)\to S^1$ induces a homotopy equivalence $\Rep(F_2,U(2))\simeq (S^1)^2$. The map $A_2\to A'_2$ is induced by the horizontal map in
\[
\xymatrix{
\Hom(F_2,U(2))_{hU(2)} \ar[r] \ar[dr] & \Rep(F_2,U(2)) \ar[d]^-{\sim} \\
& (S^1)^2
}
\]
But the diagonal arrow has a section and therefore $A_2\to A'_2$ is surjective as well.

If $V\to W$ is a surjective map of graded vector spaces, then so is $\Alt^2(V)\to \Alt^2(W)$. It follows from Corollary \ref{cor:hhchargedalgebra} that the map
\[
\HH_\ast^{(S^1)^r}(A)_2\to \HH_\ast^{(S^1)^r}(A')_2
\]
is surjective. By naturality (see Remark \ref{rem:naturality}), it follows that the spectral sequence of Proposition \ref{prop:ss} 
\[
E^2=\HH^{(S^1)^r}_\ast(A')_{2} \Longrightarrow H_\ast(\Rep(F_2\times \Z^r,U(2));k)
\]
degenerates at the $E^2$-page. Finally, the Poincar{\'e} polynomial of $\HH^{(S^1)^r}_\ast(A')_{2}$ can be computed from Corollary \ref{cor:hhchargedalgebra}; we have $p_{A'_1}(t)=(1+t)^2$, $p_{A'_2}(t)=(1+t)^2$; then use Lemma \ref{lem:poincarepolynomial}.
\end{proof}


\end{document}